\documentclass[12pt]{amsart}
\usepackage{graphicx}
\usepackage{amsmath} \usepackage{amsthm}
\usepackage{amssymb}
\usepackage{amsfonts}
\usepackage{mathtools} \usepackage{xcolor} \usepackage{bm}
\usepackage{hyperref}
\usepackage{algorithm}
\usepackage[noend]{algpseudocode} \hypersetup{ colorlinks, citecolor=black, filecolor=black, linkcolor=black, urlcolor=black }
\usepackage{float} \usepackage{fullpage} \usepackage{bbm}
\usepackage{ytableau}

\ytableausetup{boxsize=2em} \usepackage{tikz}
\usetikzlibrary{calc}
\pgfmathsetseed{12345}
\allowdisplaybreaks[1]
\theoremstyle{plain}
\newtheorem{theorem}{Theorem}[section]
\newtheorem{proposition}[theorem]{Proposition}

\newtheorem{corollary}[theorem]{Corollary}
\newtheorem{lemma}[theorem]{Lemma}

\theoremstyle{definition}
\newtheorem{definition}[theorem]{Definition}
\newtheorem{innercustomdef}{Definition}
\newenvironment{primeddef}[1]
  {\renewcommand\theinnercustomdef{#1}\innercustomdef}
  {\endinnercustomdef}
\newtheorem{example}[theorem]{Example}
\newtheorem{remark}[theorem]{Remark}

\DeclareMathOperator{\SYT}{SYT}

\DeclareMathOperator{\SSYT}{SSYT}

\DeclareMathOperator{\shape}{shape}
\DeclareMathOperator{\Des}{Des}
\DeclareMathOperator{\des}{des}
\DeclareMathOperator{\fDes}{fDes}
\DeclareMathOperator{\fdes}{fdes}
\DeclareMathOperator{\sDes}{sDes}

\DeclareMathOperator{\maj}{maj}

\DeclareMathOperator{\fmaj}{fmaj}

\DeclareMathOperator{\inv}{inv}

\DeclareMathOperator{\sinv}{sinv}
\DeclareMathOperator{\sInv}{sInv}
\DeclareMathOperator{\winv}{winv}
\DeclareMathOperator{\wInv}{wInv}

\DeclareMathOperator{\grFrob}{grFrob}
\DeclareMathOperator{\Frob}{Frob}
\DeclareMathOperator{\Hilb}{Hilb}
\DeclareMathOperator{\sign}{sign}

\DeclareMathOperator{\Stir}{Stir}

\DeclareMathOperator{\Ind}{Ind}

\DeclareMathOperator{\GL}{GL}
\DeclareMathOperator{\stair}{\mathrm{st}}

\DeclareMathOperator{\OP}{\mathcal{OP}}
\DeclareMathOperator{\SOP}{\widehat{\OP}}
\DeclareMathOperator{\OMP}{\mathcal{OMP}}
\DeclareMathOperator{\SOMP}{\widehat{\OMP}}
\DeclareMathOperator{\OmP}{\mathcal{O}m\mathcal{P}}
\DeclareMathOperator{\SOmP}{\widehat{\OmP}}

\DeclareMathOperator{\QSym}{\mathcal{Q}\mathit{sym}}
\DeclareMathOperator{\stdLR}{\overrightarrow{\mathrm{std}}}
\DeclareMathOperator{\stdRL}{\overleftarrow{\mathrm{std}}}
\DeclareMathOperator{\Sym}{Sym}

\newcommand{\llambda}{\bm \lambda}
\newcommand{\ggamma}{\bm \gamma}
\newcommand{\mmu}{\bm \mu}
\newcommand{\bbeta}{{\bm \beta}}
\newcommand{\symm}{\mathfrak{S}}
\newcommand{\AAA}{\mathcal{A}}

\newcommand{\ii}{\mathbf{i}}
\newcommand{\Z}{\mathbb{Z}}
\newcommand{\C}{\mathbb{C}}
\newcommand{\N}{\mathbb{N}}

\newcommand{\x}{\mathbf{x}}

\newcommand{\PP}{{\bf P}}
\newcommand{\QQ}{{\bf Q}}
\newcommand{\UU}{{\bf U}}
\newcommand{\eps}{\varepsilon}

\definecolor{cbBlue}{RGB}{0,114,178}
\definecolor{cbVermillion}{RGB}{213,94,0}
\definecolor{cbOrange}{RGB}{230,159,0}
\definecolor{cbSkyBlue}{RGB}{86,180,233}
\definecolor{cbGreen}{RGB}{0,158,115}
\definecolor{cbPurple}{RGB}{204,121,167}

\newcommand{\RandomFerrers}[3]{%
  \pgfmathtruncatemacro{\A}{2+int(rnd*3)}%
  \pgfmathtruncatemacro{\B}{1+int(rnd*\A)}%
  \pgfmathtruncatemacro{\C}{1+int(rnd*\B)}%

  \begin{scope}[shift={(#1,#2)}]
\draw (0,0) -- (-\A,0) -- (-\A,-3) -- (-\A+\C,-3) -- (-\A+\C,-2) -- (-\A+\B,-2) -- (-\A+\B,-1) -- (0,-1) -- cycle;

    \node[anchor=north west, inner sep=2pt]
      at (-\A,0) {$Q^{(#3)}$};
  \end{scope}

  \pgfmathparse{#1-\A}
  \xdef\nextx{\pgfmathresult} \pgfmathparse{#2-3}
  \xdef\nexty{\pgfmathresult} }

\newcommand{\FerrersChain}[1]{
  \begin{tikzpicture}[scale=.45,line join=round]
\def\currentx{0} \def\currenty{0}
    \pgfmathtruncatemacro{\lastindex}{#1-1}

    \foreach \i in {0,...,\lastindex}{
      \RandomFerrers{\currentx}{\currenty}{\i}

\ifnum\i=0 \node[anchor=south east, inner sep=3pt]
          at (\nextx,\nexty) {$0$}; \fi

      \xdef\currentx{\nextx}
      \xdef\currenty{\nexty} }
  \end{tikzpicture}
}

\DeclareRobustCommand{\qbinom}{\genfrac[]{0pt}{}}

\title{The bigraded Frobenius series of the wreath product superspace coinvariant rings}

\author[S. Bhattacharya]{Sutanay Bhattacharya}
\address{Department of Mathematics\\
         University of California, San Diego, CA, USA}
\email{subhattacharya@ucsd.edu}

\author[S. Corteel]{Sylvie Corteel}
\address{Department of Mathematics\\
         University of California, Berkeley, CA, USA}
\email{corteel@berkeley.edu}

\author[Y. Jiang]{Yuhan Jiang}
\address{Department of Mathematics\\
         University of California, Berkeley, CA, USA}
\email{jyh@math.berkeley.edu}

\author[J. Lentfer]{John Lentfer}
\address{Department of Mathematics\\
         University of California, San Diego, CA, USA}
\email{jlentfer@ucsd.edu}

\author[B. Rhoades]{Brendon Rhoades}
\address{Department of Mathematics\\
         University of California, San Diego, CA, USA}
\email{bprhoades@ucsd.edu}

\begin{document}

\begin{abstract}
	We determine the bigraded Frobenius series of the superspace coinvariant ring $R^{(1,1)}_{\mathbb{Z}_r\wr\mathfrak{S}_n}$ for the wreath products $\mathbb{Z}_r\wr\mathfrak{S}_n$, which is the quotient of the ring $\C[x_1,\ldots,x_n]\otimes\bigwedge[\theta_1,\ldots,\theta_n]$ of polynomial-valued differential forms by the ideal generated by diagonal $\mathbb{Z}_r\wr\mathfrak{S}_n$-invariants with vanishing constant term. The bigraded multiplicity of each irreducible character is a sum over standard Young $r$-tableaux of an expression determined by the flag major index and flag descent number.
	This refines the Frobenius series graded by fermionic degree found by Bhattacharya--Rhoades (2026). It specializes at $r=2$ to the hyperoctahedral group $\mathfrak{B}_n$ case, and it specializes at $r=1$ to recover the theorem of Murai--Rhoades--Wilson (2025) on the symmetric group $\mathfrak{S}_n$.
	
     We give new formulas for the bigraded multiplicities of hook characters of the superspace coinvariant ring in types $A$ and $B$, yielding upper bounds on trigraded multiplicities of the sign character of $R^{(1,2)}_{\mathfrak{S}_n}$ and the determinant character of $R^{(1,2)}_{\mathfrak{B}_n}$. 
     As a corollary, we deduce a conjecture of F. Bergeron (2024) on hook components of $R^{(1,1)}_{\mathfrak{S}_n}$. 
\end{abstract}

\maketitle

\section{Introduction}

Coinvariant rings are central objects in algebra and combinatorics, with connections to representation theory
\cite{Chevalley, MR59914, MR526968, Bergeron2020, BergeronOPAC, Lentfer-Supersymmetry}, algebraic geometry
\cite{BGG1973, Haiman2001, Haiman2002, Demazure1973, DeConciniProcesi}, differential forms
\cite{MR154929,SwansonWallach1,SwansonWallach2}, and link homology \cite{Khovanov, KhovanovRozansky1,
	KhovanovRozansky2, Hog17, Mellit, GorskyMellit, GonzalezLentferMularczyk}. The
classical coinvariant ring $R_{\mathfrak{S}_n}^{(1,0)}$ and the type $B$ coinvariant ring $R_{\mathfrak{B}_n}^{(1,0)}$,
which are isomorphic to the cohomology rings of the type $A$ and type $B$ flag varieties \cite{Borel,Leray}, are
special cases of coinvariant rings $R_{\Z_r \wr \mathfrak{S}_n}^{(1,0)}$ for wreath products, whose graded Frobenius series
was determined by Stembridge \cite{Stembridge}. (The definition of the coinvariant rings $R_G^{(k,j)}$ is given in
equation~\eqref{eq:coinvariant-ring-def}.)

Taking two sets of commutative variables instead gives the diagonal coinvariant ring
$R_{\mathfrak{S}_n}^{(2,0)}$ and its type $B$ variant $R_{\mathfrak{B}_n}^{(2,0)}$, which were introduced by Haiman
\cite{Haiman1994}. Using novel results on the geometry of the Hilbert scheme of points in $\C^2$ \cite{Haiman2001},
Haiman determined that the bigraded Frobenius series of $R_{\mathfrak{S}_n}^{(2,0)}$ is $\nabla e_n$, and
consequently, its dimension is $(n+1)^{n-1}$ \cite{Haiman2002}. Carlsson--Oblomkov later established a monomial basis
\cite{CarlssonOblomkov}. In type $B$, while the exact dimension of $R_{\mathfrak{B}_n}^{(2,0)}$ is unknown, Haiman
conjectured that it is at least $(2n+1)^n$ \cite{Haiman1994}. Gordon confirmed this bound using certain representations
of a rational Cherednik algebra \cite{Gordon}; the bound was further improved by Ajila--Griffeth \cite{AjilaGriffeth}.

For $\Z_r \wr \mathfrak{S}_n$ with $r \in \{1,2\}$, the reflection representation $V$ is isomorphic to its dual $V^*$, but
when $r \geq 3$, $V \not\cong V^*$. Thus it is possible to define different types of coinvariant rings depending on
whether $V$ or $V^*$ is used (see Section~\ref{subsec:coinvariants} for discussion). A diagonal coinvariant ring for wreath products,
arising as a quotient of $\C[V \oplus V^*]$, was studied by Griffeth \cite{Griffeth}, who bounded its dimension from below.

More than three decades after the seminal paper of Haiman \cite{Haiman1994}, the exact dimension, Hilbert series, and
bigraded Frobenius series are still unknown for $R_{\mathfrak{B}_n}^{(2,0)}$ and the wreath product cases, even conjecturally.

The diagonal fermionic coinvariant rings $R_{\mathfrak{S}_n}^{(0,2)}$ and $R_{\mathfrak{B}_n}^{(0,2)}$ are defined similarly, but with two sets of anticommuting variables.  Kim--Rhoades \cite{MR4381935} studied them in a
type-independent way as quotients of $\bigwedge(V \oplus V^*)$.
The bigraded multiplicity of each irreducible representation in $R_{\mathfrak{B}_n}^{(0,2)}$ was elucidated in \cite{JiangLentfer}. The
multiplicity of the sign character of $R_{\mathfrak{S}_n}^{(0,3)}$ was established in
\cite{lentfer2026signcharactertriagonalfermionic}, and that of the determinant character of
$R_{\mathfrak{B}_n}^{(0,3)}$ was established in \cite{JiangLentfer}.

By incorporating both commutative and anticommutative variables, one obtains the superspace coinvariant ring
$R_{\mathfrak{S}_n}^{(1,1)}$, first studied by N.~Bergeron, Colmenarejo, Li, Machacek, Sulzgruber, and Zabrocki. It was
the subject of conjectures by them, and by Sagan--Swanson \cite{MR4674564}. The multiplicity of the sign character of
$R_{\mathfrak{S}_n}^{(1,1)}$ \cite{SwansonWallach1}, its dimension and Hilbert series \cite{RhoadesWilson2023}, a
monomial basis \cite{ACKMR2024}, and its bigraded Frobenius series \cite{fields2025} have all been determined. In
particular, Murai--Rhoades--Wilson \cite{fields2025} showed that the bigraded Frobenius series of
$R_{\mathfrak{S}_n}^{(1,1)}$ may be written equivalently as
\begin{equation}\label{eq:type-A-product-form}
	\grFrob\big(R^{(1,1)}_{\mathfrak{S}_n};q;z\big) = \sum_{T \in \SYT_n} q^{\maj(T)} \prod_{j=1}^{\des(T)} \left(1+\frac{z}{q^{j}}\right)\, s_{\shape(T)}(\x)
	= \sum_{k=1}^{n} z^{n-k}\, \Delta'_{e_{k-1}} e_n \big|_{t=0}.
\end{equation}
Importantly, $\Delta'_{e_k}$ is an eigenoperator of the modified Macdonald basis, generalizing the $\nabla$ operator \cite{BergeronGarsiaHaimanTesler, HaglundRemmelWilson2018}.
The full trigraded symmetric function $\sum_{k=1}^{n} z^{n-k}\, \Delta'_{e_{k-1}} e_n$ is a sum of symmetric functions appearing in the Delta conjecture of Haglund, Remmel, and Wilson \cite{HaglundRemmelWilson2018}, whose `rise version' was proven first by D'Adderio--Mellit \cite{DAdderioMellit} and subsequently also by Blasiak, Haiman, Morse, Pun, and Seelinger \cite{BHMPS-Delta}.
Zabrocki \cite{Zabrocki2019} conjectured that the trigraded Frobenius series of $R_{\mathfrak{S}_n}^{(2,1)}$ is
\begin{equation}\label{eq:zabrocki-conjecture}
	\grFrob(R_{\mathfrak{S}_n}^{(2,1)};q,t;z) = \sum_{k=1}^{n} z^{n-k}\, \Delta'_{e_{k-1}} e_n.
\end{equation}
Since $R^{(1,1)}_{\mathfrak{S}_n}$ is the $t$-degree zero part of $R^{(2,1)}_{\mathfrak{S}_n}$, the theorem of Murai--Rhoades--Wilson is the $t=0$ case of equation~\eqref{eq:zabrocki-conjecture}.
The sign-isotypic component of Zabrocki's conjecture was established in \cite{GonzalezLentferMularczyk}, that is, the trigraded multiplicities of $s_{(1^n)}$ on both sides of equation~\eqref{eq:zabrocki-conjecture} agree.

The Hilbert series of the type $B$ ring $R_{\mathfrak{B}_n}^{(1,1)}$ was conjectured by Swanson--Wallach
\cite{SwansonWallach2}; a monomial basis was conjectured by Sagan--Swanson \cite{MR4674564}. The multiplicity of the
determinant character of $R_{\mathfrak{B}_n}^{(1,1)}$ was found by Swanson--Wallach \cite{SwansonWallach1}, while the
dimension and Hilbert series of $R_{\mathfrak{B}_n}^{(1,1)}$ were established by the first author in \cite{bhattacharya}. These results
were extended to wreath product groups by the first and last authors in \cite{BR}, 
resolving conjectures of Sagan--Swanson \cite{SSreflection}.
Furthermore, the ungraded Frobenius series of $R_{\Z_r \wr \mathfrak{S}_n}^{(1,1)}$ and its Frobenius series graded by
fermionic degree were established in \cite{BR}. The bases constructed in \cite{BR} determine the inner product of the
bigraded Frobenius series of $R_{\Z_r \wr \mathfrak{S}_n}^{(1,1)}$ with each of the symmetric functions $e_{\llambda}$.
However, no explicit formula for the bigraded multiplicities of the irreducible characters, i.e., the bigraded Frobenius series
in the Schur basis, was known or even conjectured for $R_{\Z_r \wr \mathfrak{S}_n}^{(1,1)}$ for $r \geq 2$. In this
paper, we establish such a result for all wreath product groups $\Z_r \wr \mathfrak{S}_n$. At $r=2$ it gives the bigraded
Frobenius series of $R_{\mathfrak{B}_n}^{(1,1)}$, and at $r=1$ it specializes to the first equality of equation~\eqref{eq:type-A-product-form}.

\begin{theorem}\label{thm:main-wreath}
	Fix integers $r,n \geq 1$. The bigraded Frobenius series of $R^{(1,1)}_{\Z_r\wr\mathfrak{S}_n}$ is given by
	\begin{equation}
		\grFrob(R^{(1,1)}_{\Z_r\wr\mathfrak{S}_n};q;z)=\sum_{\QQ\in \SYT^r_n}q^{\fmaj(\QQ)}\prod_{j=1}^{\fdes(\QQ)}\left(1+\frac{z}{q^{rj-r+1}}\right)s_{\shape(\QQ)}(\x),
	\end{equation}
	where $\SYT^r_n$ is the set of standard Young $r$-tableaux of size $n$, and $\fmaj(\QQ)$ and $\fdes(\QQ)$ are the flag major index and the number of flag descents of $\QQ$.
\end{theorem}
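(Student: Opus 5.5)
We will follow the strategy of Murai--Rhoades--Wilson for $\mathfrak{S}_n$ and feed in the wreath product data from \cite{BR}. The \emph{input} is the result of \cite{BR}: its bases determine, for every $r$-multipartition $\llambda$, the bigraded inner product $\langle \grFrob(R^{(1,1)}_{\Z_r\wr\mathfrak{S}_n};q;z), e_{\llambda}\rangle$. Equivalently, they give the bigraded Hilbert series of the parabolic invariant pieces $(R^{(1,1)})^{P}$ twisted by sign-type characters of Young subgroups of $\Z_r\wr\mathfrak{S}_n$. Since the $e_{\llambda}$ (as $\llambda$ ranges) span the ring of $r$-colored symmetric functions of degree $n$, these pairings pin down $\grFrob$ uniquely. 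So the theorem reduces to a single identity: for each $\llambda$,
\[
\Big\langle \sum_{\QQ\in\SYT^r_n} q^{\fmaj(\QQ)}\prod_{j=1}^{\fdes(\QQ)}\Big(1+\frac{z}{q^{rj-r+1}}\Big) s_{\shape(\QQ)},\ e_{\llambda}\Big\rangle
\]
must equal the \cite{BR} generating function.

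\emph{Step 1: rewrite the right-hand side.} Expand the product by choosing a subset of the $\fdes(\QQ)$ factors. Then use the colored quasisymmetric expansion of Schur functions (Poirier/Stembridge). Pairing with $e_{\llambda}$ turns the tableau sum into a sum over colored words, or colored permutations, whose inverse has prescribed "reverse-shuffle" descent structure. Concretely, $\langle s_{\shape(\QQ)}, e_{\llambda}\rangle$ counts $r$-tableaux of the transposed content. Summing $q^{\fmaj}$ over $\QQ$ of a fixed shape recovers Stembridge's graded Frobenius series of $R^{(1,0)}$. So the $z^0$ term already matches \cite{Stembridge}, and the $z$-top term matches the fermionic-degree result of \cite{BR}. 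Both serve as sanity checks.

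\emph{Step 2: a combinatorial model for the \cite{BR} side.} The \cite{BR} basis is indexed by colored ordered set partitions, or equivalently by colored permutations with marked entries. The bigrading is recorded by a "flag" version of a $\mathrm{coinv}$/$\maj$ statistic, with each of the $k$ marked fermions contributing a shift. We will write this pairing as a sum over colored permutations $w$ of $q^{\fmaj(w)}$ times a product over marked flag descents. The goal is then a statistic-preserving bijection, or a $q$-counting argument via RSK for colored permutations. Colored RSK sends $w$ to a pair $(\PP,\QQ)$ of $r$-tableaux and preserves flag descents between $w^{-1}$ and $\QQ$. Under it, choosing $k$ of the flag descents at positions $j_1<\dots<j_k$ should yield the weight $z^k q^{-\sum (r j_i - r+1)}$. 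Equivalently, we need the identity
\[
\sum_{S\subseteq[\fdes]} \prod_{j\in S} z\,q^{-(rj-r+1)} = \prod_{j=1}^{\fdes}\Big(1+\frac{z}{q^{rj-r+1}}\Big).
\]
Here the shift $rj-r+1$ should arise because lowering a flag descent from color-and-position contribution $r\cdot(\text{position})+\text{color}$ to the minimum value reduces $\fmaj$ by $r(j-1)+1$ when it is the $j$-th smallest descent.

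\emph{Main obstacle.} The hard step is matching the \cite{BR} generating function, which is naturally expressed through $e_{\llambda}$-pairings of colored ordered set partitions with fermionic insertions, to the tableau formula. We first plan to prove the identity at the level of $\langle\cdot, e_{\llambda}\rangle$ by induction on $n$: remove the largest letter, check that both sides satisfy the same recursion in $q,z$, and track how $\fdes$ changes when the letter lands in each color. If the recursion is messy, the fallback is to mimic \cite{fields2025}. There one uses a Gröbner/ordered-partition filtration of $R^{(1,1)}$ whose successive quotients are modules $R^{(1,0)}$-induced, with known graded Frobenius characters given by Stembridge's colored $\maj$ generating functions. One then shows that the resulting alternating sum collapses to the product formula via a $q$-binomial identity in base $q^r$.
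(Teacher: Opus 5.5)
Your reduction is sound as far as it goes: the $e$-pairings coming from the antisymmetrized bases of Bhattacharya--Rhoades do determine the series, and the paper also proceeds by showing that both sides satisfy a common recursion. But the step that carries all the content is left as a plan, and the mechanism you suggest for it does not work as stated.

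\textbf{The tableau side.} The weight $q^{\fmaj(\QQ)}\prod_{j}(1+zq^{-(rj-r+1)})$ lives on the recording tableau, i.e.\ on positions. Pairing with $e_{\llambda}$ constrains content through the conjugate shape, $\langle s_{\shape(\QQ)},e_{\llambda}\rangle=K_{\shape(\QQ)',\llambda}$. Deleting the largest letter does not interact cleanly with an $\fmaj$/$\fdes$ weight, and you give no way to close ``tracking how $\fdes$ changes'' into a recursion. What is missing is the chain that moves the tableau side into a model where letter insertion is clean:
\begin{enumerate}
\item Expand the product and apply colored RSK to write the $z^{n-k}$ part as $D_{n,k}=\sum_{\rho\in\SOmP^r_{n,k}}q^{\fmaj(\rho)}\x^\rho$. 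Here a starred flag descent is a merge inside a block or, at position $0$, a nonempty zero block.
\item Replace $\fmaj$ by the inversion statistic $\sinv$, using Wilson's $\maj$--$\inv$ equidistribution on ordered multiset partitions, colored by a variable substitution.
\item Apply $\omega$ through colored fundamental quasisymmetric functions to get $\omega D_{n,k}=\sum_{\rho\in\SOMP^r_{n,k}}q^{\winv(\rho)}\x^\rho$. Now $e$-pairings of $D_{n,k}$ become monomial coefficients.
\end{enumerate}
Only in this last model does inserting all $j$ copies of the largest letter $N^i$ contribute a factor independent of the rest, namely $\kappa^{(i)}_{k;j,\ell}=e_\ell(U_{k,i})h_{j-\ell}(V_{k,i})$. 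This yields $e_j(\x^{(i)})^\perp D_{n,k}=\sum_\ell\kappa^{(i)}_{k;j,\ell}D_{n-j,k-\ell}$. The zero block is what supplies the extra weight $q^{i-1}$ in $V_{k,i}$ for $i>0$.

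\textbf{The algebraic side.} You also do not derive the matching recursion here. The BR pairing is a sum over $\ggamma\le\bbeta$ of batch-by-batch $q$-counts of admissible exponent sequences. Peeling off the last batch produces the same $\kappa^{(i)}_{k;j,\ell}$: strictly increasing exponents $\equiv i$ bounded by $rk-1$, and weakly increasing exponents $\equiv i-1$ bounded by $rk+r-2$. To obtain $e_j(\x^{(i)})^\perp$ for every color $i$, paired against an arbitrary remaining composition, the paper extends BR's basis to the twisted antisymmetrizers $\varepsilon^{(d)}_{\bbeta}$ and to $r$-compositions.

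\textbf{Other problems.}
\begin{itemize}
\item Your description of the BR input is inaccurate. Their module model $\C[\SOP^r_n]\otimes\det$ is ungraded and uses super ordered set partitions with a zero block. Omitting the zero block gives the wrong dimension for $r\ge 2$.
\item No bigraded statistic on that model is available. Positing that ``the bigrading is recorded by a flag $\mathrm{coinv}/\maj$ statistic'' essentially assumes the theorem.
\item The fallback filtration with $R^{(1,0)}$-induced subquotients is not established for wreath products either.
\item Minor: the factor $zq^{-(rj-r+1)}$ belongs to the flag descent with exactly $j$ flag descents weakly to its right, i.e.\ the $j$-th largest, not the $j$-th smallest.
\end{itemize}
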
 
Here the irreducible representations of $\Z_r\wr\mathfrak{S}_n$ are indexed by $r$-partitions as in Section~\ref{ss:rep_wr}. For $r \geq 3$ the formula depends on this indexing and on the action of $\Z_r \wr \mathfrak{S}_n$ on the variables, which are fixed in equation~\eqref{eq:action-on-variables} and Remark~\ref{rmk:conventions}.

We give some applications.
First, we extract the bigraded multiplicities of the hook
characters in types $B$ and $A$ (Propositions~\ref{prop:type-B-hooks-11} and~\ref{prop:type-A-hooks-11}).
Second, we use these formulas to bound the trigraded multiplicities of the determinant character of $R_{\mathfrak{B}_n}^{(1,2)}$ and the sign character of $R_{\mathfrak{S}_n}^{(1,2)}$ (Corollaries~\ref{cor:B12} and~\ref{cor:A12}); the ungraded bounds are, respectively, $3^n$ and $3^{n-1}$.
The type $A$ hook multiplicities also establish a conjecture of F.~Bergeron on the hook components of the multivariate diagonal coinvariants \cite[Conjecture~4]{BergeronOPAC} (Corollary~\ref{cor:bergeron-conjecture-4}).
Finally, we recover the Hilbert series of $R^{(1,1)}_{\Z_r\wr\mathfrak{S}_n}$, conjectured by Sagan--Swanson \cite{SSreflection} and proven by Bhattacharya--Rhoades \cite{BR}.

The organization of the paper is as follows. In  Section~\ref{sec:background}, we review relevant background: colored
permutations, $r$-tableaux, representation theory of $\Z_r\wr\mathfrak{S}_n$, colored RSK, coinvariant rings, colored quasisymmetric functions, and colored ordered set partitions. In Section~\ref{sec:main-results-wreath}, we prove our main result,
Theorem~\ref{thm:main-wreath}. In Section~\ref{sec:specialization-B}, we compute the hook multiplicities in types $B$ and $A$, then bound the determinant and sign multiplicities respectively of $R_{\mathfrak{B}_n}^{(1,2)}$ and $R_{\mathfrak{S}_n}^{(1,2)}$.
In Section~\ref{sec:combinatorial-consequences}, we recover the Hilbert series directly from our bigraded Frobenius series formula.

\section{Background}\label{sec:background}
Let $r$ be a positive integer, and let $\zeta := e^{2\pi\ii/r}$ be a primitive $r$th root of unity, where $\ii =
	\sqrt{-1}$. Let $\Z_r := \{1,\zeta,\dots,\zeta^{r-1}\}$ be the cyclic group of $r$th roots of unity. An \emph{$r$-colored positive integer} is a formal symbol $i^j$ with $i \in \Z_{>0}$ and $0 \leq j \leq r-1$; we will refer to
$i$ and $j$ as the \emph{base} and the \emph{color} of $i^j$ respectively. (One may also think of $i^j$ as the complex number $\zeta^j i$.)  Write $\N_r$ for the set of $r$-colored positive integers. We also use the symbol $0$, to which we always assign the color $0$. Consider the alphabet of
\emph{$r$-colored nonnegative integers}
$\{0\} \cup \N_r$
with the total order $<_r$ defined by comparing colors first, with higher color being smaller, and bases second:
\begin{equation}\label{eq:r-order}
	1^{r-1} <_r 2^{r-1} <_r\cdots <_r 1^{r-2} <_r 2^{r-2}<_r\cdots <_r 0 <_r 1^0 <_r 2^0 <_r \cdots.
\end{equation}

\subsection{Colored words and \texorpdfstring{$r$}{r}-tableaux}\label{subsec:tableaux}

An \emph{$r$-colored word} of length $n$ is a word $g = g_1 \cdots g_n$ in the alphabet $\N_r$.
We write $g_i = w_i^{\eps(i)}$, so that $w_i$ is the base and $\eps(i)$ is the color of $g_i$. Define $\eps(g):= \eps(1) + \cdots + \eps(n)$ for the sum of the colors of $g$.
An \emph{$r$-colored permutation} is an $r$-colored word of length $n$ in which each of $1,\ldots,n$ appears exactly once as a base.
Equivalently, it is a pair $g=(w,\eps)$ of a permutation $w \in \mathfrak{S}_n$ and a color function $\eps: [n] \to \{0,\ldots, r-1\} $.
In one-line notation, $g$ is written as $g_1\cdots g_n$, with $g_i = w(i)^{\eps(i)}$.
The $r$-colored permutations form the wreath product group $\Z_r \wr \mathfrak{S}_n$ of $\Z_r$ and the symmetric group $\mathfrak{S}_n$ (see Section~\ref{ss:rep_wr}). 
When $r=2$, $\Z_2 \wr \mathfrak{S}_n$ is the type $B$ Weyl group, the hyperoctahedral group $\mathfrak{B}_n$.

An index $0 \leq i \leq n-1$ is a \emph{flag descent} of an $r$-colored word $g$ if $g_i >_r g_{i+1}$, with
the convention that $g_0 = 0$. We denote the flag descent set of $g$ by $\fDes(g)$, and we denote the descent set of
$g$ by $\Des(g) = \fDes(g) \setminus \{0\}$. The number of flag descents is denoted by $\fdes(g) = |\fDes(g)|$. 
The \emph{major index} and the \emph{flag major index} of $g$ are defined by
\begin{equation}
	\maj(g) := \sum_{i \in \Des(g)} i, \qquad \text{and} \qquad \fmaj(g):= r \cdot \maj(g) + \eps(g).
\end{equation} 

For $r$-colored permutations, $\fmaj$ is the flag major index of Adin--Roichman \cite{MR1829737} (for $r=2$, see also \cite{MR3629266}).
Its extension to $r$-colored words is due to Haglund--Loehr--Remmel \cite{haglund2005statistics}.

\begin{example}\label{ex:fdes}
	Let $r=3$, and $g=3^12^02^04^23^21^0$ be a three-colored word.
	Then $\fDes(g)=\{0,3,4\}$ since $0>_r
		3^1$, $2^0>_r4^2$ and $4^2>_r3^2$ per our ordering. Consequently, $\Des(g)=\{3,4\}$, and $\fdes(g)=3$. The sum of
	colors in $g$ evaluates to $\eps(g)=1+0+0+2+2+0=5$, and so $\fmaj(g)=3\cdot(3+4)+5=26$.
\end{example}

Define an \emph{$r$-partition} of $n$ by $\llambda = (\lambda^{(0)},\ldots,\lambda^{(r-1)})$,
an $r$-tuple of partitions such that $|\lambda^{(0)}| + \cdots + |\lambda^{(r-1)}| = n$.
Denote this by $\llambda \vdash_r n$.

\begin{definition}\label{def:r-tableaux}
	A \emph{semistandard $r$-tableau} of shape $\llambda$ (then we write $\shape(\QQ) := \llambda$) is an $r$-tuple of semistandard Young tableaux $\QQ = (Q^{(0)},\ldots,Q^{(r-1)})$ such that $Q^{(i)}$ has shape $\lambda^{(i)}$ for $0 \leq i \leq r-1$.
    We will denote by $\SSYT^r(\llambda)$ the set of all semistandard $r$-tableaux of shape $\llambda$, and by $\SSYT^r_n$ the set of all semistandard $r$-tableaux of size $n$.

	A \emph{standard $r$-tableau} of shape $\llambda$ is a semistandard $r$-tableau of shape $\llambda$ where every
	element of $[n]$ appears exactly once. We will denote by $\SYT^r(\llambda)$ the set of all standard $r$-tableaux of
	shape $\llambda$, and by $\SYT^r_n$ the set of all standard $r$-tableaux of size $n$.
\end{definition}

We visualize an $r$-tableau by arranging $Q^{(0)},\ldots,Q^{(r-1)}$ along a diagonal, with $Q^{(0)}$ in the upper
right, and with $0$ between $Q^{(0)}$ and $Q^{(1)}$, as in Figure~\ref{fig:random-4-tableaux}.
\begin{figure}[H]
	\centering \FerrersChain{4}
	\caption{The skew shape used to visualize a $4$-tableau: $Q^{(0)}$ is in the upper right, each $Q^{(i+1)}$ lies below and to the left of $Q^{(i)}$, and a $0$ is placed between $Q^{(0)}$ and $Q^{(1)}$.}
	\label{fig:random-4-tableaux}
\end{figure}
The \emph{flag descent set} of a standard $r$-tableau $\QQ$ is the descent set of this skew tableau, with the convention that $0$ is a descent when $1$ lies below it. Explicitly:
\begin{definition}\label{def:fDes-sDes}
	An index $1 \leq i \leq n-1$ is a \emph{descent} of a standard $r$-tableau $\QQ$ if $i+1$ appears in a row below $i$ in the same tableau, or $i \in Q^{(j)}$ and $i+1 \in Q^{(k)}$ for $j < k$.
	We denote the \emph{descent set} of $\QQ$ by $\Des(\QQ)$.

	We say that $0$ is a \emph{flag descent} of $\QQ$ if $1 \notin Q^{(0)}$.
	We define the \emph{flag descent set} by
    \begin{equation}
        \fDes(\QQ) := \begin{cases}
            \Des(\QQ) \cup \{0\} & \text{ if $0$ is a flag descent of $\QQ$},\\
            \Des(\QQ) & \text{ otherwise}.
        \end{cases}
    \end{equation}
	The number of flag descents of $\QQ$ is denoted by $\fdes(\QQ) = |\fDes(\QQ)|$.

	The \emph{major index} and the \emph{flag major index} of $\QQ$ are defined by
	\begin{equation}\label{eq:fmajT}
		\maj(\QQ):= \sum_{i \in \Des(\QQ)} i, \qquad \text{and} \qquad \fmaj(\QQ):= r \maj(\QQ) + \sum_{j=0}^{r-1} j |Q^{(j)}|.
	\end{equation}

	Define the coloring function $\eps$ on $[n]$ such that $\eps(i) = j$ if $i \in Q^{(j)}$.
	The \emph{signed descent set} of $\QQ$ is the pair $\sDes(\QQ) = (\Des(\QQ), \eps)$.
\end{definition}

\subsection{Representation theory of wreath products}\label{ss:rep_wr}

The following is adapted from \cite[Chapter I, Appendix B]{Macdonald} and \cite{Stembridge}.
An $r$-colored permutation $g = (w,\eps)$ may be identified with the $n \times n$ monomial matrix whose $(w(i),i)$ entry
is $\zeta^{\eps(i)}$ for $i \in [n]$ and whose other entries are $0$. Then the set of all $r$-colored permutations may
be identified with the group $\Z_r \wr \mathfrak{S}_n$. Let $\sign(g):= \sign(w)$ be the sign of the underlying permutation
matrix, and let $\chi(g):= \zeta^{\eps(g)}$ be the product of the nonzero entries of $g$, so that $\det(g) = \sign(g)\chi(g)$.

Let $\{e_1,\ldots,e_n\}$ denote the standard basis of $V = \C^n$. The group $\Z_r \wr \mathfrak{S}_n$ acts on $V$ by $g \cdot
	e_i = \zeta^{\eps(i)} e_{w(i)}$, and consequently on polynomial functions on $V$ by $(g \cdot f)(v):= f(g^{-1}v)$. On
coordinates $x_1,\ldots,x_n$, this yields
\begin{equation}\label{eq:action-on-variables}
	g \cdot x_i = \zeta^{-\eps(i)} x_{w(i)}.
\end{equation}

Let $\x = (\x^{(0)},\ldots,\x^{(r-1)})$ be $r$ sets of infinitely many variables: $\x^{(i)} =
	(x^{(i)}_1,x^{(i)}_2,\ldots)$. 
    For an $r$-colored word $g = g_1\cdots g_n$ with $g_k = w_k^{\eps_k}$, and for a semistandard $r$-tableau $\QQ$, write
\begin{equation}\label{eq:word-and-tableau-monomials} \x^{g} := x^{(\eps_1)}_{w_1} x^{(\eps_2)}_{w_2} \cdots x^{(\eps_n)}_{w_n}, \qquad \x^{\QQ} := \prod_{i=0}^{r-1} \prod_{j} x^{(i)}_{j}, \end{equation}
where the index $j$ runs over the entries of $Q^{(i)}$ with multiplicity.
    Let $\Lambda^r(\x):= \Lambda(\x^{(0)}) \otimes \cdots \otimes \Lambda(\x^{(r-1)})$ be
the ring of symmetric functions that are symmetric in each set of variables separately.

For each standard basis of the ring of symmetric functions $\Lambda$, we can naturally extend the definition to $\Lambda^r(\x)$:
\begin{itemize}
	\item $s_{\llambda}:= s_{\lambda^{(0)}}(\x^{(0)}) \cdots s_{\lambda^{(r-1)}}(\x^{(r-1)})$,
	\item $p_{\llambda}:= p_{\lambda^{(0)}}(\x^{(0)}) \cdots p_{\lambda^{(r-1)}}(\x^{(r-1)})$,
	\item $h_{\llambda}:= h_{\lambda^{(0)}}(\x^{(0)}) \cdots h_{\lambda^{(r-1)}}(\x^{(r-1)})$,
	\item $e_{\llambda}:= e_{\lambda^{(0)}}(\x^{(0)}) \cdots e_{\lambda^{(r-1)}}(\x^{(r-1)})$,
	\item $m_{\llambda}:= m_{\lambda^{(0)}}(\x^{(0)}) \cdots m_{\lambda^{(r-1)}}(\x^{(r-1)})$.
\end{itemize}
The set $\{s_{\llambda}: \llambda \vdash_r n\}$ forms a basis of $\Lambda^r(\x)_n$, the total-degree $n$ component of $\Lambda^r(\x)$.
Hence, the set $\{s_{\llambda}: \llambda \text{ an $r$-tuple of partitions}\}$ forms a basis of $\Lambda^r(\x)$.
Similarly, $p_{\llambda}, h_{\llambda}, e_{\llambda}, m_{\llambda}$ are bases.
The Schur functions may also be written as
\begin{equation}\label{eq:schur-ssyt}
	s_{\llambda}(\x)=\sum_{\QQ\in \SSYT^r(\llambda)}\x^\QQ.
\end{equation}

Equip $\Lambda^r(\x)$ with the Hall inner product $\langle -,- \rangle$ for which the $s_{\llambda}$ are orthonormal,
that is, $\langle s_{\llambda}, s_{\mmu} \rangle = \delta_{\llambda, \mmu}$. Consequently, $\langle h_{\llambda},
	m_{\mmu} \rangle = \delta_{\llambda, \mmu}$. For $f \in \Lambda^r(\x)$ we write $f^{\perp}$ for the adjoint with
respect to the Hall inner product of multiplication by $f$. The involution $\omega$ acts on each tensor factor, so
that $\omega s_{\llambda} = s_{\llambda'}$ with $\llambda':= (\lambda^{(0)\prime},\dots,\lambda^{(r-1)\prime})$.

For $0 \leq j \leq r-1$, let $U_j$ be the one-dimensional representation of $\Z_r$ on which $\zeta$ acts by
$\zeta^{-j}$. For $U$ a representation of $\Z_r$ and $S$ a representation of $\symm_m$, $U \wr S:= U^{\otimes m}
	\otimes S$ is a representation of $\Z_r \wr \symm_m$. For $\llambda \vdash_r n$, let
\begin{equation}
	V^{\llambda}:= \Ind_{\Z_r\wr\symm_{|\lambda^{(0)}|}\times\cdots\times\Z_r\wr\symm_{|\lambda^{(r-1)}|}}^{\Z_r\wr\mathfrak{S}_n}\big( (U_0 \wr S^{\lambda^{(0)}}) \boxtimes \cdots \boxtimes (U_{r-1} \wr S^{\lambda^{(r-1)}})\big),
\end{equation}
where $S^\mu$ denotes a Specht module of $\symm_{|\mu|}$.
The set $\{V^{\llambda}\}_{\llambda \vdash_r n}$ constitutes the complete set of irreducible representations of $\Z_r\wr\mathfrak{S}_n$ \cite[Chapter~I, Appendix~B]{Macdonald}.
The trivial representation is $V^{((n),\varnothing,\dots,\varnothing)}$ and the determinant representation is $V^{(\varnothing,\dots,\varnothing,(1^n))}$.

The \emph{Frobenius characteristic} of a $\Z_r\wr\mathfrak{S}_n$-module $V$ is obtained by applying the isomorphism that sends
each constituent irreducible module $V^{\llambda}$ to $s_{\llambda}$. 
That is, if $V \cong \bigoplus_{\llambda} c_{\llambda} V^{\llambda}$ for some $c_{\llambda} \in \Z_{\geq 0}$, and $V = \bigoplus_{a,b} V_{a,b}$ is bigraded, then
\begin{equation}
	\Frob(V):= \sum_{\llambda} c_{\llambda} s_{\llambda}, \qquad \grFrob(V;q;z):= \sum_{a,b} \Frob(V_{a,b}) q^a z^b.
\end{equation}

Since $\dim V^{\llambda} = \binom{n}{|\lambda^{(0)}|, \ldots, |\lambda^{(r-1)}|} \prod_{i} f^{\lambda^{(i)}}$, where
$f^{\mu} = |\SYT(\mu)|$, the Hilbert series of a bigraded $\Z_r \wr \mathfrak{S}_n$-module may be recovered from its
bigraded Frobenius series via
\begin{equation}\label{eq:Hilb-from-Frob}
	\Hilb(V;q;z) = \left\langle \grFrob(V;q;z), \big(h_1(\x^{(0)}) + \cdots + h_1(\x^{(r-1)})\big)^n \right\rangle.
\end{equation}

Induction from block-diagonal subgroups is multiplicative: for $\Z_r \wr \symm_{a} \times \Z_r \wr \symm_{b}$ and
representations $V$, $W$ of the two factors,
\begin{equation}\label{eq:Frob-induction}
	\Frob \left( \Ind_{\Z_r \wr \symm_{a} \times \Z_r \wr \symm_{b}}^{\Z_r \wr \symm_{a+b}} (V \boxtimes W) \right) = \Frob(V)\,\Frob(W),
\end{equation}
and likewise for more than two factors.

\begin{remark}\label{rmk:conventions}
	The labeling of the $U_j$ and the action of equation~\eqref{eq:action-on-variables} must be chosen together.
	The convention chosen in \cite{BR} uses $U_1,\ldots, U_r$ as the irreducible one-dimensional representations, where $U_j$ sends $\zeta$ to $\zeta^j$; their $U_j$ is our $U_{r-j}$. Consequently, any Frobenius series there can be turned into a Frobenius series in the conventions of this paper by the change of variables
	\begin{equation}
		\x^{(1)}\mapsto \x^{(r-1)}, \x^{(2)}\mapsto \x^{(r-2)},\ldots, \x^{(r)}\mapsto \x^{(0)}.
	\end{equation}
\end{remark}

It will be useful to define certain antisymmetrizing elements of the group algebra of $\Z_r\wr\mathfrak{S}_n$ indexed by
$r$-compositions. An \emph{$r$-composition} $\bbeta = (\beta^{(0)} \parallel \dots \parallel \beta^{(r-1)})$ of $n$ is an $r$-tuple of weak compositions with $|\beta^{(0)}| + \cdots + |\beta^{(r-1)}| = n$. 
We associate to $\bbeta$ the subgroup $\Z_r
	\wr \symm_{\bbeta} \subseteq \Z_r \wr \mathfrak{S}_n$ of block diagonal matrices
\begin{equation}
	\Z_r \wr \symm_{\bbeta}:=
	\left( \Z_r \wr \symm_{\beta^{(0)}_1}  \times \cdots \times \Z_r \wr \symm_{\beta_{\ell(\beta^{(0)})}^{(0)}}
	\right) \times \cdots \times \left(
	\Z_r \wr \symm_{\beta^{(r-1)}_1} \times \cdots \times \Z_r \wr \symm_{\beta_{\ell(\beta^{(r-1)})}^{(r-1)}} \right).
\end{equation}

For any integer $d$, we define a linear character $\sign^{(d)}_\bbeta:\Z_r\wr\symm_\bbeta\to \C^\times$ by the
following rule. Recall that $\chi(g)$ is the product of the nonzero entries in a monomial matrix $g$; $\sign(g)$ is the
sign of the permutation matrix underlying a monomial matrix $g$. We set
\begin{multline}\label{eq:sign_rep}
	\sign^{(d)}_{\bbeta}(g^{(0)}_1, \ldots,g^{(r-1)}_{\ell(\beta^{(r-1)})}):= \\
	\prod_{i=1}^{\ell(\beta^{(0)})} \sign(g^{(0)}_i) \chi(g^{(0)}_i)^d \times
	\prod_{i=1}^{\ell(\beta^{(1)})} \sign(g^{(1)}_i) \chi(g^{(1)}_i)^{d+1} \times \cdots \times\prod_{i=1}^{\ell(\beta^{(r-1)})} \sign(g^{(r-1)}_i) \chi(g^{(r-1)}_i)^{d+r-1}.
\end{multline}
This gives rise to the group algebra element
\begin{equation}\label{eq:eps_beta^d}
	\varepsilon^{(d)}_{\bbeta}:= \sum_{g \in \Z_r \wr \symm_{\bbeta}} \sign^{(d)}_{\bbeta}(g) g\in \C[\Z_r\wr\mathfrak{S}_n].
\end{equation}
For an $r$-composition
$\bbeta=(\beta^{(0)} \parallel \ldots \parallel \beta^{(r-1)})$,
we write
\begin{equation}
\begin{aligned}
	e_{\bbeta}(\x):= \prod_{c=0}^{r-1} \prod_{j=1}^{\ell(\beta^{(c)})} e_{\beta_j^{(c)}}(\x^{(c)}),\; 
    h_{\bbeta}(\x)
	:=
	\prod_{c=0}^{r-1}
	\prod_{j=1}^{\ell(\beta^{(c)})} 
	h_{\beta_j^{(c)}}(\x^{(c)}),
\end{aligned}
\end{equation}
with the convention $e_0=h_0=1$.
If $V$ is any $\Z_r \wr \mathfrak{S}_n$-module, then $\varepsilon^{(d)}_{\bbeta} \cdot V$ is a vector subspace of $V$. 
Define the {\em $d$-shift} of $\bbeta$ via
\begin{equation}\label{eq:d-shift}
	\bbeta^{\text{shift}_d}:= (\beta^{(-d)} \parallel \beta^{(1-d)} \parallel\cdots \parallel\beta^{(r-1-d)}),
\end{equation}
where superscripts are interpreted modulo $r$.

For $d=0$ and $\bbeta$ an $r$-partition, the following recovers \cite[Lemma~4.2]{BR} (with different conventions; see Remark~\ref{rmk:conventions}).

\begin{lemma}\label{lem:d-dimension-interpretation}
	For any $\Z_r \wr \mathfrak{S}_n$-module $V$, $r$-composition $\bbeta$ of $n$, and integer $d$,
	\begin{equation}\label{eq:dim-frob}
		\dim_\C (\varepsilon^{(d)}_{\bbeta} \cdot V) = \langle \Frob(V), e_{\bbeta^{\mathrm{shift}_d}}\rangle.
	\end{equation}
\end{lemma}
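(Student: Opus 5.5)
The plan is to identify $\varepsilon^{(d)}_{\bbeta}\cdot V$ as an isotypic component of the restriction of $V$ to $G:=\Z_r\wr\symm_{\bbeta}$. We then compute its dimension by Frobenius reciprocity, and use the multiplicativity \eqref{eq:Frob-induction} to evaluate the Frobenius characteristic of the relevant induced character block by block.

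\textbf{Step 1 (isotypic component).} Write $\psi:=\sign^{(d)}_{\bbeta}$, a linear character of $G$, so that $\varepsilon^{(d)}_{\bbeta}=\sum_{g\in G}\psi(g)g$. For $h\in G$ we get $h\,\varepsilon^{(d)}_{\bbeta}=\sum_g \psi(g)\,hg=\psi(h)^{-1}\varepsilon^{(d)}_{\bbeta}$. Also $(\varepsilon^{(d)}_{\bbeta})^2=|G|\,\varepsilon^{(d)}_{\bbeta}$. Hence $|G|^{-1}\varepsilon^{(d)}_{\bbeta}$ is the central idempotent of $\C[G]$ projecting onto the $\overline{\psi}$-isotypic component of $\operatorname{Res}^{\Z_r\wr\mathfrak{S}_n}_G V$. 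Its dimension is therefore the multiplicity $\langle \operatorname{Res}V,\overline\psi\rangle_G$. By Frobenius reciprocity this equals the multiplicity pairing $\langle V,\Ind_G^{\Z_r\wr\mathfrak{S}_n}\overline\psi\rangle$. Since the map $V^{\llambda}\mapsto s_{\llambda}$ is an isometry onto $\Lambda^r(\x)$ with the Hall inner product, this in turn equals $\langle \Frob(V),\Frob(\Ind\overline\psi)\rangle$.

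\textbf{Step 2 (Frobenius image of the induced character).} The group $G$ is a product of blocks $\Z_r\wr\symm_m$, with $m=\beta^{(c)}_i$ for each color $c$. Blocks with $m=0$ are trivial and correspond to $e_0=1$. By \eqref{eq:Frob-induction}, together with transitivity of induction through the Young-type subgroups used to define $V^{\llambda}$, the characteristic $\Frob(\Ind\overline\psi)$ is the product over blocks of $\Frob$ of the one-dimensional character $\overline\psi$ restricted to that block.

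On a block of color $c$, the restriction is $g\mapsto \sign(g)\chi(g)^{-(d+c)}$, since $\sign$ is real. In a diagonal $\Z_r$-factor, $\zeta$ acts by $\zeta^{-(d+c)}$, which is exactly $U_{c+d}$ with the index read modulo $r$. The symmetric-group part acts by the sign. So this character is $U_{c+d}\wr S^{(1^m)}$, which is the irreducible $V^{\llambda}$ with $\lambda^{(c+d)}=(1^m)$ and all other components empty. Its Frobenius characteristic is $e_m(\x^{(c+d)})$.

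Multiplying over blocks gives $\prod_{c}\prod_i e_{\beta^{(c)}_i}(\x^{(c+d)})$. By \eqref{eq:d-shift}, the component of $\bbeta^{\mathrm{shift}_d}$ in position $k$ is $\beta^{(k-d)}$, so the composition placed in color $c+d$ is $\beta^{(c)}$. Hence the product is exactly $e_{\bbeta^{\mathrm{shift}_d}}(\x)$, which proves \eqref{eq:dim-frob}.

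\textbf{Main obstacle.} The only delicate point is bookkeeping of conventions. One must confirm that $h\varepsilon=\psi(h)^{-1}\varepsilon$ produces the conjugate character $\overline\psi$ rather than $\psi$. One must then check that this conjugation, combined with the convention that $U_j$ has $\zeta$ acting by $\zeta^{-j}$, yields color $c+d$ and not $-(c+d)$. I would verify this directly on a single block with $m=1$, where $\Z_r\wr\symm_1=\Z_r$, and then rely on multiplicativity for the general case.
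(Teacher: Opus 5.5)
Your proof is correct and follows the paper's argument step for step. You identify $|G|^{-1}\varepsilon^{(d)}_{\bbeta}$ as the projector onto the $\overline{\sign^{(d)}_{\bbeta}}$-isotypic component, apply Frobenius reciprocity, recognize each block's character as $U_{c+d}\wr S^{(1^m)}$ with Frobenius image $e_m(\x^{(c+d)})$, and multiply via \eqref{eq:Frob-induction}, which is exactly the paper's route, including the convention check that yields color $c+d$ rather than $-(c+d)$.
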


\begin{proof}
    Note that $\varepsilon^{(d)}_\bbeta=\sum_{h \in \Z_r\wr\symm_\bbeta}\sign^{(d)}_\bbeta(h) h$ is $|\Z_r\wr\symm_\bbeta|$ times the projection of $V$ onto the $\overline{\sign^{(d)}_\bbeta}$-isotypic component of its restriction to $\Z_r\wr\symm_\bbeta$.
    By Frobenius reciprocity (see \cite[Chapter I, Appendix B]{Macdonald}), 
    \begin{equation}
        \dim_\C(\varepsilon^{(d)}_\bbeta\cdot V) = \langle \Frob(V), \Frob(\Ind_{\Z_r\wr\symm_\bbeta}^{\Z_r\wr\symm_n} \overline{\sign^{(d)}_\bbeta} ) \rangle.
    \end{equation}
    On $\Z_r\wr\symm_{\beta^{(c)}_j}$, the character $\overline{\sign^{(d)}_\bbeta}$ maps $g \mapsto \sign(g)\zeta^{-(d+c)\eps(g)}$, which is the representation $U_{d+c}\wr S^{(1^{\beta^{(c)}_j})}$ ($d+c$ is read modulo $r$), whose Frobenius image is $e_{\beta^{(c)}_j}(\x^{(d+c)})$.
    Thus by equation~\eqref{eq:Frob-induction},
    \begin{equation}
        \Frob(\Ind_{\Z_r\wr\symm_\bbeta}^{\Z_r\wr\symm_n} \overline{\sign^{(d)}_\bbeta}) = \prod_{c,j}e_{\beta^{(c)}_j}(\x^{(d+c)})=e_{\bbeta^{\mathrm{shift}_d}}(\x).
    \end{equation}
\end{proof}

\subsection{Colored RSK}\label{subsec:colored-RSK}
Okada \cite{Okada} generalized the Robinson--Schensted correspondence for the symmetric group to wreath product groups $\Z_r \wr \mathfrak{S}_n$, by applying the Robinson--Schensted correspondence separately to the letters of each color.
We will need the analogous bijection for $r$-colored words, the \emph{$r$-colored Robinson--Schensted--Knuth (RSK) correspondence}, which is obtained in the same way from the classical RSK correspondence (see also \cite{baumann2008solomon} where a version is called the Robinson--Schensted--Okada correspondence).

Consider an $r$-colored word $g=g_1\cdots g_n$, where $g_k$ is the colored integer $w_k^{\eps_k}$. 
For each color $i\in \{0,\ldots, r-1\}$, let  $g_{a_1}g_{a_2}\cdots g_{a_m}$ ($a_1 < a_2 <\cdots < a_m$) be the subsequence of the letters of color $i$.
Apply RSK to the biword 
\begin{equation}
	\begin{pmatrix}
		a_1     & a_2     & \cdots & a_m     \\
		w_{a_1} & w_{a_2} & \cdots & w_{a_m}
	\end{pmatrix},
\end{equation}
to obtain a pair of Young tableaux $(P^{(i)},Q^{(i)})$ of the same shape. 
$P^{(i)}$ is semistandard with the bases $w_{a_1},\dots,w_{a_m}$ as entries, and $Q^{(i)}$ is standard with the positions $a_1,\dots,a_m$ as entries. 
Doing this for each $i$ yields a pair $(\PP(g),\QQ(g))$ of $r$-tableaux of the same shape where $\PP=(P^{(0)},\ldots, P^{(r-1)})$ and $\QQ=(Q^{(0)},\ldots, Q^{(r-1)})$. 
Recall the monomials $\x^{g}$ and $\x^{\QQ}$ of equation~\eqref{eq:word-and-tableau-monomials}.

\begin{theorem}[$r$-colored RSK]\label{thm:r-RSK}
	The \emph{$r$-colored RSK} map $g \mapsto (\PP(g),\QQ(g))$ is a bijection
	\begin{equation}
		\left\{
		\begin{array}{c}r\text{-colored words} \\
			\text{ of length $n$}
		\end{array}
		\right\}\to \bigsqcup_{\llambda\vdash_r n}\SSYT^r(\llambda)\times \SYT^r(\llambda).
	\end{equation}
    Moreover, for every $r$-colored word $g$ of length $n$,
	\begin{equation}\label{eq:RSK-monomial}
		\x^{g} = \x^{\PP(g)},
	\end{equation}
    \begin{equation}\label{eq:RSK-fDes}
        \fDes(g) = \fDes(\QQ(g)),
    \end{equation}
    and
    \begin{equation}\label{eq:RSK-fmaj}
        \fmaj(g) = \fmaj(\QQ(g)).
    \end{equation}
\end{theorem}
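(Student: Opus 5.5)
The plan is to deduce everything from the classical RSK correspondence applied one color at a time, together with a careful comparison of the descent conventions on both sides.

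\textbf{Bijectivity.} An $r$-colored word of length $n$ is the same thing as two pieces of data. The first is an ordered set partition of the position set $[n]$ into color classes $A_0,\dots,A_{r-1}$. The second is, for each color $i$, an ordinary word on $\Z_{>0}$ indexed by the positions in $A_i$.

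Classical RSK on the biword with top row $A_i$ (increasing) and bottom row the bases is a bijection onto pairs $(P^{(i)},Q^{(i)})$ of equal shape. Here $P^{(i)}$ is semistandard and $Q^{(i)}$ is standard in the alphabet $A_i$.

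Taking the product over $i$, and noting that $A_i$ is recovered as the entry set of $Q^{(i)}$, gives the bijection onto $\bigsqcup_{\llambda}\SSYT^r(\llambda)\times\SYT^r(\llambda)$. The monomial identity \eqref{eq:RSK-monomial} is immediate, because $P^{(i)}$ has the same multiset of entries as the bases of color $i$.

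\textbf{Flag descents.} I check $\fDes(g)=\fDes(\QQ(g))$ index by index.

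For $i=0$: $0$ is a flag descent of $g$ exactly when $g_1<_r 0$, i.e.\ when $\eps_1\ne 0$. This happens exactly when $1\notin Q^{(0)}$.

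For $1\le i\le n-1$ with $\eps_i\ne\eps_{i+1}$: by the order \eqref{eq:r-order}, $g_i>_r g_{i+1}$ iff $\eps_i<\eps_{i+1}$. That is exactly the tableau condition $i\in Q^{(j)}$, $i+1\in Q^{(k)}$ with $j<k$.

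For $\eps_i=\eps_{i+1}=c$: positions $i,i+1$ are consecutive in the biword of color $c$. Within one color, $<_r$ compares bases, so $g_i>_r g_{i+1}$ iff $w_i>w_{i+1}$. The standard RSK fact is that a descent of the bottom row at consecutive top entries $a<a'$ corresponds to $a'$ lying in a strictly lower row than $a$ in the recording tableau. For consecutive positions with equal bases, ties are bumped so that this is not a descent, and $a'$ lands weakly above, in a row weakly higher. This matches Definition~\ref{def:fDes-sDes}.

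This is the only step requiring care: invoking the row-bumping lemma in a form valid for a recording tableau with arbitrary (not $1,\dots,m$) increasing labels. That form follows by standardizing $A_c$.

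\textbf{Flag major index.} Since $\Des=\fDes\setminus\{0\}$ on both sides, we get $\maj(g)=\maj(\QQ(g))$. Also $\eps(g)=\sum_j j\,|A_j|=\sum_j j\,|Q^{(j)}|$. Together these give \eqref{eq:RSK-fmaj}.
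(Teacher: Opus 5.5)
Your proposal is correct and follows essentially the same route as the paper: color-by-color classical RSK gives bijectivity and $\x^{g}=\x^{\PP(g)}$, then flag descents are compared index by index ($0$ via the color of $g_1$, mixed colors via the order $<_r$, equal colors via the row-bumping descent lemma), and $\eps(g)=\sum_j j\,|Q^{(j)}|$ gives $\fmaj(g)=\fmaj(\QQ(g))$. Your explicit handling of equal bases and of arbitrary position labels is the detail the paper covers by citing the standard descent property of RSK.
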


\begin{proof}
    The map is a bijection since it is invertible, and any pair in the codomain is attained. $\QQ(g)$ determines the positions $a_1 < \cdots < a_m$ of the letters of each color $i$, and then $(P^{(i)},Q^{(i)})$ determines the bases $w_{a_1},\dots,w_{a_m}$ by the inverse of RSK \cite{Knuth1970PERMUTATIONSMA}.
    The equality $\x^g = \x^{\PP(g)}$ holds because $P^{(i)}$ contains the bases of the letters of color $i$ with multiplicity.

Let $1 \leq k \leq n-1$. If $g_k$ and $g_{k+1}$ have the same color $i$, then they are consecutive letters of the subword of color $i$, so $k \in \Des(g)$ if and only if $w_k > w_{k+1}$, which holds if and only if $k+1$ lies in a lower row of $Q^{(i)}$ than $k$ (this is a descent in the typical sense within a tableau; see \cite[Section 7.23]{EC2}).
If they have different colors $i$ and $j$, then $g_k >_r g_{k+1}$ if and only if $i < j$, which is the condition for $k$ to be a descent of $\QQ(g)$ across distinct tableaux.
Thus $\Des(g) = \Des(\QQ(g))$.
Finally $0 \in \fDes(g)$ if and only if $g_1$ has nonzero color, if and only if $1 \notin Q^{(0)}$, if and only if $0 \in \fDes(\QQ(g))$.
Since the letters of color $j$ are recorded in $Q^{(j)}$, $\eps(g) = \sum_j j |Q^{(j)}|$ and $\fmaj(g) = \fmaj(\QQ(g))$. 
\end{proof}

\begin{example}
	Let $g = 3^1 2^0 2^0 4^2 3^2 1^0$ be the $3$-colored word of Example~\ref{ex:fdes}.
	The corresponding pair of $3$-tableaux is
	\begin{equation}
		P^{(0)} = \ytableaushort{12,2},\quad P^{(1)} = \ytableaushort{3}, \quad P^{(2)} = \ytableaushort{3,4},
	\end{equation}
	\begin{equation}
		Q^{(0)} = \ytableaushort{23,6}, \quad Q^{(1)} = \ytableaushort{1}, \quad
		Q^{(2)} = \ytableaushort{4,5}.
	\end{equation}
	The letters of color $0$ are $2^0,2^0,1^0$, at positions $2,3,6$, giving the biword 
    $\left(\begin{smallmatrix} 2 & 3 & 6 \\ 2 & 2 & 1\end{smallmatrix}\right)$, whose image under RSK is
$(P^{(0)},Q^{(0)})$ shown above.
	 The other colors are treated similarly.
	The $3$-tableau $\QQ=(Q^{(0)},Q^{(1)},Q^{(2)})$ is depicted in Figure~\ref{fig:recording}; its flag descents are $0,3,4$, and $\fmaj(\QQ(g)) = 3\cdot 7 + (0\cdot 3 + 1 \cdot 1 + 2 \cdot 2) = 26$, in agreement with Example~\ref{ex:fdes}. 
	\begin{figure}[H]
		\centering
		\begin{tikzpicture}
			\draw (0,0) rectangle (1,1);
			\draw (0,1) rectangle (1,2);
			\draw (1,2) rectangle (2,3);
			\draw (2,3) rectangle (3,4);
			\draw (2,4) rectangle (3,5);
			\draw (3,4) rectangle (4,5);
			\node at (1.5,3.5) {0};
			\node at (0.5,0.5) {5};
			\node at (0.5,1.5) {4};
			\node at (1.5,2.5) {1};
			\node at (2.5,3.5) {6};
			\node at (2.5,4.5) {2};
			\node at (3.5,4.5) {3};
		\end{tikzpicture}
		\caption{The recording tableau of the three-colored word $g = 3^1 2^0 2^0 4^2 3^2 1^0$.}
		\label{fig:recording}
	\end{figure}
\end{example}

An $r$-colored permutation $g \in \Z_r\wr\mathfrak{S}_n$ is an $r$-colored word of length $n$ in which each of the integers $1,\ldots,n$ appears exactly once, and on $r$-colored permutations the correspondence reduces to that of Okada \cite[Theorem~3.3]{Okada}.

\begin{corollary}\label{cor:RS-colored-permutations}
	The $r$-colored RSK correspondence restricts to a bijection
	\begin{equation}\label{eq:RS-colored-permutations}
		\Z_r\wr\mathfrak{S}_n \to \bigsqcup_{\llambda\vdash_r n}\SYT^r(\llambda)\times \SYT^r(\llambda), \qquad
		g \mapsto (\PP(g),\QQ(g)),
	\end{equation}
	and for every $g \in \Z_r\wr\mathfrak{S}_n$ we have
	\begin{equation}\label{eq:RSK-fDes-fmaj}
		\fDes(g)=\fDes(\QQ(g)) \quad \text{and} \quad \fmaj(g)=\fmaj(\QQ(g)).
	\end{equation}
\end{corollary}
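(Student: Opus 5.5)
The plan is to derive Corollary~\ref{cor:RS-colored-permutations} directly from Theorem~\ref{thm:r-RSK} by restricting the bijection to the subset of $r$-colored words that are $r$-colored permutations. The only thing to check is the image of this subset.

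\textbf{Step 1: permutations give standard $\PP$.} Let $g$ be an $r$-colored word of length $n$. By equation~\eqref{eq:RSK-monomial}, the multiset of bases occurring in $g$, together with their colors, equals the multiset of entries of $\PP(g)$: the entries of $P^{(i)}$ are exactly the bases of the letters of color $i$. Hence $g$ is an $r$-colored permutation, meaning each of $1,\ldots,n$ occurs exactly once as a base, if and only if every element of $[n]$ occurs exactly once among the entries of $P^{(0)},\ldots,P^{(r-1)}$. That is precisely the condition $\PP(g)\in\SYT^r(\llambda)$, where $\llambda=\shape(\PP(g))$.

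\textbf{Step 2: the restriction is a bijection.} Theorem~\ref{thm:r-RSK} gives a bijection from all $r$-colored words of length $n$ onto $\bigsqcup_{\llambda}\SSYT^r(\llambda)\times\SYT^r(\llambda)$. Step~1 shows that the preimage of the subset $\bigsqcup_{\llambda}\SYT^r(\llambda)\times\SYT^r(\llambda)$ is exactly $\Z_r\wr\mathfrak{S}_n$, identified with the $r$-colored permutations. A bijection restricted to the full preimage of a subset is a bijection onto that subset, which gives equation~\eqref{eq:RS-colored-permutations}.

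\textbf{Step 3: the statistics.} The identities in equation~\eqref{eq:RSK-fDes-fmaj} are the special cases of equations~\eqref{eq:RSK-fDes} and~\eqref{eq:RSK-fmaj} for words that happen to be permutations. The definitions of $\fDes$ and $\fmaj$ for colored permutations agree with those for colored words, since both use $\fmaj(g)=r\maj(g)+\eps(g)$.

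There is no real obstacle. The one point needing care is in Step~1: one must use that $P^{(i)}$ records the bases of color $i$ with multiplicity, not just the underlying set. Only then does "each base appears once" translate exactly into standardness of the tuple $\PP$. Finally, one may remark that on permutations the map coincides with Okada's correspondence \cite[Theorem~3.3]{Okada}, since RSK on a biword with distinct bottom entries is ordinary Robinson--Schensted.
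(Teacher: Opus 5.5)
Your proposal is correct and follows the paper's proof exactly. The paper likewise uses equation~\eqref{eq:RSK-monomial} to show that $g$ is an $r$-colored permutation if and only if $\PP(g)$ is standard, restricts the bijection of Theorem~\ref{thm:r-RSK}, and reads off equation~\eqref{eq:RSK-fDes-fmaj} as the special case of equations~\eqref{eq:RSK-fDes} and~\eqref{eq:RSK-fmaj}.
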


\begin{proof}
	By equation~\eqref{eq:RSK-monomial}, each of $1,\ldots,n$ appears exactly once as a base in $g$ if and only if each of $1,\ldots,n$ appears exactly once in $\PP(g)$ if and only if $\PP(g)$ is standard.
	Thus the bijection of Theorem~\ref{thm:r-RSK} restricts to equation~\eqref{eq:RS-colored-permutations}, and equation~\eqref{eq:RSK-fDes-fmaj} consists of equations~\eqref{eq:RSK-fmaj} and~\eqref{eq:RSK-fDes}.
\end{proof}

\subsection{Coinvariant rings}\label{subsec:coinvariants}
Fix $n \geq 1$. Fix a finite group $G \subseteq \GL_n$, and integers $k,j \geq 0$. Let $V$ be the defining
representation of $G$, so that $V \cong \C^n$. Place coordinates so that
\begin{equation}
	\Sym\left((V^*)^{\oplus k}\right) \otimes \bigwedge\left((V^*)^{\oplus j}\right) = \C[\bm{x}^{(1)},\ldots,\bm{x}^{(k)},\bm{\theta}^{(1)},\ldots,\bm{\theta}^{(j)}].
\end{equation}
Here, the $\bm{x}^{(i)} =
	(x^{(i)}_1,\ldots,x^{(i)}_n)$ are sets of commuting (bosonic) variables\footnote{Note that $\bm{x}^{(i)}$ denotes a set
	of variables of a ring, while $\x^{(i)}$ denotes an alphabet for the ring of symmetric functions $\Lambda^r$.} and the
$\bm{\theta}^{(i)} = (\theta^{(i)}_1,\ldots,\theta^{(i)}_n)$ are sets of anticommuting (fermionic) variables. Bosonic
variables commute with all variables, while fermionic variables anticommute with all fermionic variables. That is,
$x_j^{(i)}x_a^{(b)} = x_a^{(b)}x_j^{(i)}$, $x_j^{(i)}\theta_a^{(b)} = \theta_a^{(b)}x_j^{(i)}$, while
$\theta_j^{(i)}\theta_a^{(b)} = -\theta_a^{(b)}\theta_j^{(i)}$. This implies that $(\theta_j^{(i)})^2 = 0$. The
diagonal action of $G$ on $\C[\bm{x}^{(1)},\ldots,\bm{x}^{(k)},\bm{\theta}^{(1)},\ldots,\bm{\theta}^{(j)}]$ is given by
$G$ acting on each set of variables simultaneously, transforming like the coordinate functions on $V$ (for $G = \Z_r \wr \mathfrak{S}_n$, each set transforms as the coordinates in equation~\eqref{eq:action-on-variables}: $g \cdot x^{(a)}_i = \zeta^{-\eps(i)} x^{(a)}_{w(i)}$ and $g \cdot \theta^{(b)}_i = \zeta^{-\eps(i)} \theta^{(b)}_{w(i)}$). The \emph{$(k,j)$-bosonic-fermionic coinvariant ring} is
\begin{equation}\label{eq:coinvariant-ring-def}
	R^{(k,j)}_G:= \C[\bm{x}^{(1)},\ldots,\bm{x}^{(k)}, \bm{\theta}^{(1)},\ldots,\bm{\theta}^{(j)}]/I^{(k,j)}_G,
\end{equation}
where $I^{(k,j)}_G$ is the ideal generated by those diagonal $G$-invariants with no constant term.

In this definition, the variables span the dual $V^*$ of the defining representation $V$ of $G$.
When $G$ is generated by real reflections, such as for Weyl groups of types $A$ and $B$, we have that $V \cong V^*$, and it is inconsequential if $V$ or $V^*$ is used in the definition.
However, an important distinction arises for the wreath product groups $\Z_r \wr \mathfrak{S}_n$ with $r \geq 3$.
Griffeth \cite{Griffeth} studied the quotient of $\Sym(V) \otimes \Sym(V^*)$ by its diagonal invariants, which is not isomorphic to $R_{\Z_r \wr \mathfrak{S}_n}^{(2,0)}$ for $r \geq 3$.
Kim--Rhoades \cite{MR4381935} studied the quotient of $\bigwedge(V)
	\otimes \bigwedge(V^*)$ by its diagonal invariants, which is not isomorphic to $R_{\Z_r \wr \mathfrak{S}_n}^{(0,2)}$ for $r \geq
	3$.
Our main object of study is the superspace coinvariant ring $R_G^{(1,1)} = \C[\bm{x},\bm{\theta}]/I^{(1,1)}_G$, the quotient of the ring $\Sym(V^*) \otimes \bigwedge(V^*)$ of polynomial differential forms on $V$, as in \cite{SwansonWallach1,  SwansonWallach2, BR} (see \cite{SwansonWallach1} for some results for other choices regarding $V$ and $V^*$).
It is bigraded by $\bm{x}$-degree and $\bm{\theta}$-degree.

For $\mathbf{r} = (r_1,\ldots,r_k) \in \Z_{\geq 0}^k$ and $\mathbf{s} = (s_1,\ldots,s_j) \in \{0,1\}^j$, define the
\emph{polarized power sum} by
\begin{equation}
	p_{\mathbf{r};\mathbf{s}}:= \sum_{i=1}^{n} \big(x^{(1)}_i\big)^{r_1} \cdots \big(x^{(k)}_i\big)^{r_k} \big(\theta^{(1)}_i\big)^{s_1} \cdots \big(\theta^{(j)}_i\big)^{s_j},
\end{equation}
and write $|\mathbf{r}| := r_1 + \cdots + r_k$ and $|\mathbf{s}| := s_1 + \cdots + s_j$. Then $p_{\mathbf{r};\mathbf{s}}$ has degree $|\mathbf{r}| + |\mathbf{s}|$.

Orellana--Zabrocki \cite{OrellanaZabrocki} extended Weyl's polarization theorem \cite{Weyl} for $\mathfrak{S}_n$ from the bosonic case $j = 0$ to the bosonic-fermionic case.
This says that, for any $k$ and $j$, the polarized power sums with $1 \leq |\mathbf{r}| + |\mathbf{s}| \leq n$ generate the algebra of diagonal $\mathfrak{S}_n$-invariants.
The wreath product version easily follows.
\begin{proposition}\label{prop:Weyl-wreath-bf}
	The algebra of diagonal invariants
	$\C[\bm{x}^{(1)}, \ldots, \bm{x}^{(k)}, \bm{\theta}^{(1)}, \ldots, \bm{\theta}^{(j)}]^{\Z_r \wr \mathfrak{S}_n}$
	is generated by $1$ and the polarized power sums
	\begin{equation}\label{eq:gen-set-wreath}
		\left\{ p_{\bm{r};\bm{s}} \;\middle|\;
		\begin{array}{l}
			r \leq r_1 + \cdots + r_k +s_1 + \cdots +s_j \leq rn,         \\
			r_1 + \cdots + r_k +s_1 + \cdots +s_j\text{ a multiple of }r, \\
			r_1,\ldots,r_k \in \Z_{\geq 0}, s_1,\ldots, s_j \in \{0,1\}
		\end{array}
		\right\}.
	\end{equation}
	Consequently, the set~\eqref{eq:gen-set-wreath} generates the defining ideal $I_{\Z_r \wr \mathfrak{S}_n}^{(k,j)}$.
\end{proposition}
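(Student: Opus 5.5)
The plan is to reduce to the symmetric group case via a change of variables that converts $\Z_r\wr\mathfrak{S}_n$-invariance into $\mathfrak{S}_n$-invariance plus a degree condition. The key observation is that the diagonal cyclic subgroup $\Z_r^n \subseteq \Z_r\wr\mathfrak{S}_n$ (the elements with $w$ the identity) acts on each monomial
\begin{equation*}
m=\prod_{a,b,i} (x^{(a)}_i)^{r_{a,i}} (\theta^{(b)}_i)^{s_{b,i}}
\end{equation*}
by a scalar. Since every set of variables transforms by the same rule \eqref{eq:action-on-variables}, the element with color vector $\eps$ multiplies $m$ by $\zeta^{-\sum_i \eps(i) d_i}$, where $d_i=\sum_a r_{a,i}+\sum_b s_{b,i}$ is the total degree of $m$ in the variables with index $i$. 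Hence the $\Z_r^n$-invariant subspace is spanned by the monomials with every $d_i \equiv 0 \pmod r$. Since $\Z_r\wr\mathfrak{S}_n = \Z_r^n \rtimes \mathfrak{S}_n$, the full invariant ring is the $\mathfrak{S}_n$-invariant part of this span. This span is a subalgebra (taking the fermionic sign conventions into account), and it is $\mathfrak{S}_n$-stable because permutations preserve the multiset of index-degrees.

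Next, I would describe the $\mathfrak{S}_n$-invariants of this subalgebra by symmetrizing monomials. The $\mathfrak{S}_n$-symmetrizations of monomials with all $d_i\equiv 0 \pmod r$ span the invariants, so it suffices to show that each such symmetrization lies in the algebra generated by the listed power sums. For this, I would repeat the Orellana--Zabrocki/Weyl argument directly, rather than citing it as a black box. Consider the ``single-index'' monomials $u_i = (x^{(1)}_i)^{r_1}\cdots(\theta^{(j)}_i)^{s_j}$ with total degree a positive multiple of $r$. These are even elements, since their fermionic degree is at most $j$ but they are not necessarily commuting. Each $\Z_r^n$-invariant monomial is, up to sign, a product $\prod_i u_i$ over distinct indices, with the vectors $(\mathbf{r}_i,\mathbf{s}_i)$ drawn from the allowed set. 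The orbit sum $\sum_{\sigma}\prod_i \sigma(u_i)$ is then a ``monomial symmetric function'' in the formal vectors. The standard inclusion–exclusion (Newton/Möbius over set partitions) expresses it as a polynomial in power sums $p_{\mathbf{r};\mathbf{s}}$ whose exponent vector is a sum of some of the $(\mathbf{r}_i,\mathbf{s}_i)$. Such a sum still has total degree a multiple of $r$. Terms with repeated fermions vanish, and the signs are handled exactly as in Orellana--Zabrocki.

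The upper bound $rn$ comes from the degree-truncation step. A power sum of multidegree $(\mathbf{r};\mathbf{s})$ with total degree greater than $rn$ should be expressible through lower ones. I would argue this as in Weyl's theorem: the $\Z_r^n$-invariant monomials in a single index generate a finitely generated algebra, and the invariants are the symmetric tensors $\operatorname{Sym}^n$ of that algebra. A power sum of high degree corresponds to a multi-index element which, by the Newton identity in $n$ variables (vanishing of $e_{n+1}$ in ``$y_i=u_i$'' variables), reduces to products of power sums of smaller degree. Concretely, apply the polarized Newton identity to elements $y_i$ in the span of degree-$r$ single-index monomials. I expect this truncation step to be the main obstacle, because the fermionic signs and the restriction to multiples of $r$ must be tracked carefully.

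The final claim is routine. The ideal $I^{(k,j)}_{\Z_r\wr\mathfrak{S}_n}$ is generated by homogeneous invariants with no constant term. Each such invariant is a polynomial in the generators with no constant term, so it lies in the ideal generated by the set \eqref{eq:gen-set-wreath}.
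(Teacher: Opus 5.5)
\paragraph{The gap: the degree bound $rn$.} Your first step (the $\Z_r^n$-invariants are spanned by monomials with every index-degree divisible by $r$, and the full invariant ring is the $\mathfrak{S}_n$-invariants of this span) is exactly the paper's. Modulo sign bookkeeping, your inclusion--exclusion argument shows that the invariants are generated by the $p_{\mathbf{r};\mathbf{s}}$ of total degree a positive multiple of $r$, with no upper bound. The remaining step is the bound $|\mathbf{r}|+|\mathbf{s}|\le rn$, which is the real content of the proposition. You need it for the ideal statement as written, since the set in \eqref{eq:gen-set-wreath} is the finite bounded set. You leave this step as ``the main obstacle'' with only a gesture at polarized Newton identities, so it is not proved.

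Relatedly, your claim that the single-index monomials $u_i$ are even is false. For odd $r$, $x_i^{r-1}\theta_i$ has total degree $r$ but odd fermionic degree. Any Newton or polarization identity must therefore treat some of these elements as odd, and this is the source of the sign trouble you anticipate.

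\paragraph{The missing idea.} The paper avoids both your inclusion--exclusion and a hand-made truncation with one reduction. Let $A_i$ be the algebra of elements in index $i$ of degree divisible by $r$. It is generated by the degree-$r$ monomials $\mu_{1,i},\ldots,\mu_{N,i}$. Take the free supercommutative algebra $B=\C[\bm{y}_1,\ldots,\bm{y}_N]$, with $\bm{y}_\ell$ bosonic or fermionic according to the parity of the number of $\theta$'s in $\mu_\ell$. Then $y_{\ell,i}\mapsto \mu_{\ell,i}$ is an $\mathfrak{S}_n$-equivariant surjection $B\to A_1\otimes\cdots\otimes A_n$. Averaging over $\mathfrak{S}_n$ shows it stays surjective on invariants.

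Orellana--Zabrocki applied to $B$ gives generators $\sum_i y_{1,i}^{c_1}\cdots y_{N,i}^{c_N}$ with $1\le \sum_\ell c_\ell\le n$. Their images are $0$ or $\pm p_{\mathbf{r};\mathbf{s}}$ with $|\mathbf{r}|+|\mathbf{s}|=r\sum_\ell c_\ell\in\{r,\ldots,rn\}$. So the bound $rn$ is just $r$ times the Orellana--Zabrocki bound $n$, and all fermionic signs are absorbed into the cited theorem.

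To salvage your own route instead, use the same map. Write a power sum of degree $rm$ with $m>n$ as $\pm$ the image of a polarized power sum of $y$-degree $m$ in $B$. Reduce it there to power sums of $y$-degree at most $n$, then push forward.
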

\begin{proof}
	Let $A_i$ be the subalgebra of $\C[x^{(1)}_i,\dots,x^{(k)}_i, \theta^{(1)}_i, \ldots, \theta^{(j)}_i]$ spanned by the monomials of degree divisible by $r$.
	Then $A_i$ is generated by those monomials of degree $r$.
	Consider that the invariants of the diagonal subgroup $\Z_r^n$ are spanned by the monomials in which, for each $i$, the total degree in the variables $x^{(1)}_i,\dots,x^{(k)}_i,\theta^{(1)}_i,\dots,\theta^{(j)}_i$ is divisible by $r$; hence it is $A_1 \otimes \cdots \otimes A_n$.
	Thus $\C[\bm{x}^{(1)},\dots,\bm{\theta}^{(j)}]^{\Z_r\wr\mathfrak{S}_n} = (A_1 \otimes \cdots \otimes A_n)^{\mathfrak{S}_n}$, where $w \in \mathfrak{S}_n$ maps $A_i$ onto $A_{w(i)}$.

	Let $\mu_{1,i}, \ldots, \mu_{N,i}$ be the monomials of degree $r$ in $A_i$.
	Let $B := \C[\bm{y}_1, \ldots, \bm{y}_N]$, where $\bm{y}_\ell = (y_{\ell,1}, \ldots, y_{\ell,n})$ is a set of bosonic variables if $\mu_\ell$ has an even number of $\theta$ factors and a set of fermionic variables otherwise, with $\mathfrak{S}_n$ acting diagonally.
	Then $y_{\ell,i} \mapsto \mu_{\ell,i}$ defines an $\mathfrak{S}_n$-equivariant surjection $\phi \colon B \to A_1 \otimes \cdots \otimes A_n$.
	Averaging over $\mathfrak{S}_n$ shows that $\phi$ restricts to a surjection $B^{\mathfrak{S}_n} \to (A_1 \otimes \cdots \otimes A_n)^{\mathfrak{S}_n}$.
	By Orellana--Zabrocki's extension of Weyl's theorem \cite{Weyl,OrellanaZabrocki} applied to $B$, $B^{\mathfrak{S}_n}$ is generated by the polarized power sums $\sum_{i=1}^n y_{1,i}^{c_1} \cdots y_{N,i}^{c_N}$ with $1 \leq c_1 + \cdots + c_N \leq n$, and $\phi$ maps such a polarized power sum to $\sum_{i=1}^n \mu_{1,i}^{c_1} \cdots \mu_{N,i}^{c_N}$, which is $0$ or $\pm p_{\mathbf{r};\mathbf{s}}$ for some $\mathbf{r}, \mathbf{s}$ with $|\mathbf{r}| + |\mathbf{s}| = r(c_1 + \cdots + c_N) \in \{r, 2r, \ldots, rn\}$.
	Thus the set~\eqref{eq:gen-set-wreath} generates the algebra of invariants.
	Since its elements have no constant term, it also generates $I_{\Z_r \wr \mathfrak{S}_n}^{(k,j)}$.
\end{proof}

Stembridge computed the graded Frobenius series of the coinvariant ring of $\Z_r \wr \mathfrak{S}_n$ and gave a combinatorial
formula in terms of $r$-tableaux. We write it here in our conventions.
\begin{theorem}[{\cite[Theorem 6.6]{Stembridge}}]\label{thm:stembridge}
	We have
	\begin{equation}\label{eq:stembridge}
		\grFrob(R^{(1,0)}_{\Z_r \wr \mathfrak{S}_n};q) = \sum_{\QQ \in \SYT^r_n} q^{\fmaj(\QQ)} s_{\shape(\QQ)}(\x).
	\end{equation}
\end{theorem}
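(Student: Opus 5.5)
The plan is to deduce the formula from Chevalley's theorem together with a computation of the graded Frobenius series of the whole polynomial ring $\C[\bm{x}] = \Sym(V^*)$, and then to use the $r$-colored RSK correspondence (Theorem~\ref{thm:r-RSK}) to pass from colored words to $r$-tableaux.

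\textbf{Step 1: reduce to $\C[\bm{x}]$.} By Proposition~\ref{prop:Weyl-wreath-bf} with $(k,j)=(1,0)$, the invariant ring $\C[\bm{x}]^{\Z_r\wr\mathfrak{S}_n}$ is generated by the power sums $p_{r},p_{2r},\dots,p_{rn}$. These are algebraically independent, since they are the power sums in $x_1^r,\dots,x_n^r$. Because $\Z_r\wr\mathfrak{S}_n$ is a reflection group, Chevalley's theorem gives a graded $\Z_r\wr\mathfrak{S}_n$-isomorphism $\C[\bm{x}]\cong R^{(1,0)}_{\Z_r\wr\mathfrak{S}_n}\otimes \C[\bm{x}]^{\Z_r\wr\mathfrak{S}_n}$. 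Hence
\begin{equation*}
\grFrob(\C[\bm{x}];q)=\grFrob(R^{(1,0)}_{\Z_r\wr\mathfrak{S}_n};q)\cdot\prod_{i=1}^n\frac{1}{1-q^{ri}},
\end{equation*}
and it suffices to show that
\begin{equation*}
\prod_{i=1}^n(1-q^{ri})\cdot\grFrob(\C[\bm{x}];q)=\sum_{\QQ\in\SYT^r_n}q^{\fmaj(\QQ)}s_{\shape(\QQ)}.
\end{equation*}

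\textbf{Step 2: a plethystic formula for $\C[\bm{x}]$.} Under the action~\eqref{eq:action-on-variables}, the generator of a single $\Z_r$ sends $x^a$ to $\zeta^{-a}x^a$. So $x^a$ spans a copy of $U_{a \bmod r}$, and the one-variable graded Frobenius image is $A:=\sum_{c=0}^{r-1}\frac{q^c}{1-q^r}\,x^{(c)}$, read as an alphabet. Since $\C[\bm{x}]=\C[x]^{\otimes n}$ with $\mathfrak{S}_n$ permuting the tensor factors, the standard wreath-product plethysm (induction as in~\eqref{eq:Frob-induction}, applied orbit by orbit on monomials) gives $\grFrob(\C[\bm{x}];q)=h_n[A]$. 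Concretely, $h_n[A]$ is the generating function of multisets of $n$ pairs $(x^{(c)}_j,\,c+rm)$ with $m\geq 0$, each pair weighted by $x^{(c)}_j q^{c+rm}$.

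\textbf{Step 3: colored $P$-partition argument (expected main obstacle).} I want to show
\begin{equation*}
\prod_{i=1}^n(1-q^{ri})\,h_n[A]=\sum_{g}q^{\fmaj(g)}\x^{g},
\end{equation*}
where $g$ ranges over $r$-colored words of length $n$.

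The approach is to sort each multiset into a canonical sequence. Order by the exponent $m$ decreasing, and break ties among entries with equal $m$ by the total order $<_r$ on the colored letters $j^c$. Reading off the letters gives a colored word $g$ and a weakly decreasing sequence $m_1\geq\cdots\geq m_n\geq 0$. For each $i\in\Des(g)$ this sequence must drop strictly, $m_i>m_{i+1}$, because tied entries were sorted increasingly. Subtracting the forced drops and summing the free part over partitions with at most $n$ parts yields
\begin{equation*}
\frac{q^{r\maj(g)+\eps(g)}}{\prod_i(1-q^{ri})}\,\x^g,
\end{equation*}
and $r\maj(g)+\eps(g)$ is exactly $\fmaj(g)$.

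The delicate points are twofold. First, the tie-breaking convention (colors compared first, with higher color smaller) must be the one that makes the forced drops occur exactly at $\Des(g)$. Second, the flag descent at $0$ must not contribute extra: it is absorbed into $\eps(g)$, since color $c$ already carries $q^c$. I would verify both on small cases, e.g.\ $n=1,2$, before committing. If the convention fails, I would instead reverse the sort order and use $\fDes$ including $0$ with shifted exponents.

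\textbf{Step 4: apply colored RSK.} By Theorem~\ref{thm:r-RSK}, $\x^g=\x^{\PP(g)}$ and $\fmaj(g)=\fmaj(\QQ(g))$, and $g\mapsto(\PP(g),\QQ(g))$ is a bijection onto pairs of the same shape. Summing over $\PP\in\SSYT^r(\shape\QQ)$ and using~\eqref{eq:schur-ssyt} turns $\sum_g q^{\fmaj(g)}\x^g$ into $\sum_{\QQ}q^{\fmaj(\QQ)}s_{\shape(\QQ)}$. Combined with Step 1, this proves~\eqref{eq:stembridge}.
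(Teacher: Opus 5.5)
Your argument is correct, and it takes a genuinely different route from the paper. The paper gives no proof of its own. It cites Stembridge and observes that the statement is the $z=0$ case of Theorem~\ref{thm:main-wreath}. That theorem's proof runs through the Bhattacharya--Rhoades basis and module structure, and through the skewing recursion of Proposition~\ref{lem: Cnk-recursion}; at $k=n$ this recursion reduces to $e_j(\x^{(i)})^\perp C_{n,n}=e_j(q^i,q^{i+r},\ldots,q^{i+r(n-1)})\,C_{n-j,n-j}$. On the combinatorial side it uses Wilson's equidistribution theorem, the $\omega$ involution on colored quasisymmetric functions, and the $\winv$ recursion.

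You instead do three things. First, you use Chevalley's theorem to trade $R^{(1,0)}$ for the full polynomial ring. Second, you compute $\grFrob(\C[\bm{x}];q)$ directly by splitting into $\mathfrak{S}_n$-orbits of monomials: each orbit spans an induced module with graded Frobenius image $\prod_a q^{a\mu_a}h_{\mu_a}(\x^{(a \bmod r)})$, which is exactly your multiset description of $h_n[A]$. Third, you remove the factor $\prod_i(1-q^{ri})^{-1}$ by a MacMahon-style sorting argument. Only your final RSK step coincides with the paper, where it is the $T=\varnothing$ part of Lemma~\ref{lem: Dnk-to-fmaj}.

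Your route is short and self-contained. It uses only the polynomiality of the invariants, which you correctly extract from Proposition~\ref{prop:Weyl-wreath-bf}, and needs no knowledge of a basis of the coinvariant ring. The paper's route is much heavier, but it is what handles the fermionic grading. There, no Chevalley-type factorization of $\C[\bm{x},\bm{\theta}]$ over its invariants is available (already for $\mathfrak{S}_2$ the Hilbert series do not match), so Stembridge's formula appears only as a byproduct.

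The hedges in your Step~3 are unnecessary. Breaking ties in $\le_r$-increasing order forces strict drops exactly at $\Des(g)$. Position $0$ plays no role, because $\fmaj(g)=r\maj(g)+\eps(g)$ involves only $\Des(g)$.
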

Since the $\bm{\theta}$-degree zero part of $I^{(1,1)}_{\Z_r \wr \mathfrak{S}_n}$ is $I^{(1,0)}_{\Z_r \wr \mathfrak{S}_n}$, Theorem~\ref{thm:main-wreath} at $z = 0$ recovers Theorem~\ref{thm:stembridge}.

\subsection{Colored quasisymmetric functions}\label{subsec:colored-quasisymmetric}
The wreath product generalization of the classical quasisymmetric functions was introduced by Poirier
\cite{Poirier}, and subsequently, the theory has been further developed in \cite{HP2010,baumann2008solomon,
	moustakas2023descent, MR3629266, MantaciReutenauer}.
For the present work, we will only need the basic definitions and a few important properties of $r$-colored quasisymmetric functions, which we summarize below.
Our conventions differ from those of \cite{Poirier} by the relabeling $j   \mapsto r-j$ of the nonzero colors, which reverses their order (the color $0$ is largest in both). 

\begin{definition}\label{def: signed-set}
	Given fixed positive integers $n$ and $r$, a \emph{signed set} $\sigma$ is a pair $(T,\varepsilon)$ where $T\subseteq
		[n-1]$ and $\varepsilon$ is a map $[n]\to \{0,1,\ldots, r-1\}$, such that $\varepsilon(i)\ge \varepsilon(i+1)$ for every $i\in [n-1] \setminus T$. We let $S^r_n$ denote the collection of signed sets for $n$ and $r$.
\end{definition}

Next, we define a basis for the $r$-colored quasisymmetric functions indexed by $S^r_n$.
\begin{definition}
	For any signed set $\sigma=(T,\varepsilon)$, define the \emph{fundamental $r$-colored
		quasisymmetric function} as a formal power series on the alphabet $\x$, given by
	\begin{equation}
		F_{(T,\varepsilon)}(\x):=\sum_{\substack{i_1\le i_2\le \cdots\le i_n\\ k\in T\implies i_k<i_{k+1}}}x_{i_1}^{(\varepsilon (1))}x_{i_2}^{(\varepsilon(2))} \cdots x_{i_n}^{(\varepsilon(n))}.
	\end{equation}
\end{definition}

\begin{theorem}[{\cite[Corollary~9, Lemma~14]{Poirier}}]
	The set $\{F_{\sigma}(\x):\sigma\in S^r_n\}$ is linearly independent over $\C$, and contains $\Lambda^r(\x)_{n}$ in its span.
\end{theorem}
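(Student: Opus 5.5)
Two claims need proof: linear independence of the $F_\sigma$, and the fact that every element of $\Lambda^r(\x)_n$ is a linear combination of them. I will treat them separately.

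\emph{Linear independence.} I attach to each signed set a distinguished monomial. Given $\sigma=(T,\varepsilon)$, cut $[n]$ into maximal consecutive runs at the elements of $T$: the run boundaries are exactly after positions $k\in T$. Set $i_k$ to be the index of the run containing $k$, so $i_1=1$ and $i_{k+1}=i_k+1$ exactly when $k\in T$. Put $\mathbf m_\sigma := x^{(\varepsilon(1))}_{i_1}\cdots x^{(\varepsilon(n))}_{i_n}$. This term occurs in $F_\sigma$ with coefficient $1$.

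The key observation is that $\mathbf m_\sigma$ determines $\sigma$. Reading the word of pairs $(i_k,\varepsilon(k))$ in increasing order of the lower index, we recover $\varepsilon$ and the jump set $T$. The order is forced: the variables commute, but within a fixed lower index the colors must occur weakly decreasing, by the defining condition on signed sets, so the multiset of pairs determines the sequence.

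Next, I check when $\mathbf m_\sigma$ can occur in some other $F_\tau$. Any monomial of $F_\tau$ with weakly increasing indices $j_1\le\cdots\le j_n$ has its factors with equal index forming a consecutive block. Within such a block, $\tau$ has no descent, so its colors appear weakly decreasing. Hence if $\mathbf m_\sigma$ appears in $F_\tau$, then $\tau$'s coloring is forced to equal $\varepsilon$, and the index sequence is forced to be $(i_k)$. That index sequence must satisfy strictness on $T_\tau$, so $T_\tau\subseteq T$.

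Order the signed sets by refinement of $T$, with $\varepsilon$ fixed. Then the coefficient matrix relating $\{F_\sigma\}$ to the monomials $\{\mathbf m_\sigma\}$ is unitriangular. This gives linear independence.

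\emph{Spanning.} The basis $\{s_\llambda\}$ spans $\Lambda^r(\x)_n$, so it suffices to expand each $s_\llambda$ in the $F_\sigma$. By \eqref{eq:schur-ssyt}, $s_\llambda=\sum_{\PP}\x^{\PP}$, summed over semistandard $r$-tableaux of shape $\llambda$.

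Fix a standard $r$-tableau $\QQ$ of shape $\llambda$. I claim
\begin{equation*}
\sum_{\PP \text{ standardizing to } \QQ}\x^{\PP}=F_{\sDes(\QQ)}.
\end{equation*}
Here $\PP$ ranges over semistandard $r$-tableaux of shape $\llambda$ whose standardization, taken in the colored order $<_r$, is $\QQ$.

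This is the colored analogue of Gessel's expansion of a Schur function. To prove it, read the entries of $\PP$ in the order given by $\QQ$. That reading sequence is weakly increasing in base exactly when $\QQ$ has descents only in the allowed places. The delicate point is ties in base between different colors: the chosen total order $<_r$ puts higher colors first, which matches the condition $\varepsilon(i)\ge\varepsilon(i+1)$ for $i\notin T$. One also checks that $\sDes(\QQ)$ really is a signed set.

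Summing the displayed identity over all $\QQ$ of shape $\llambda$ gives $s_\llambda=\sum_{\QQ\in\SYT^r(\llambda)}F_{\sDes(\QQ)}$, which proves that the $F_\sigma$ span $\Lambda^r(\x)_n$.

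\emph{Main obstacle.} The hard step is the bijectivity of standardization across different colors: I must show that standardization respects the placement rule of Figure~\ref{fig:random-4-tableaux}, so that each $\PP$ arises from exactly one pair $(\QQ,\text{index sequence})$. My first attempt will be to standardize color by color and then concatenate the entries in the order $<_r$.
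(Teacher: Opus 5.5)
Your linear-independence half is correct. $\mathbf m_\sigma$ determines $\sigma$, it occurs in $F_\tau$ only when $\tau$ has the same coloring and $T_\tau\subseteq T$, and the resulting unitriangularity is a self-contained substitute for the unitriangular relation to Poirier's monomial functions that the paper cites. The gap is in the spanning half, at exactly the step you defer as the ``main obstacle'', and the standardization you commit to there fails. Standardizing in the order $<_r$ (or standardizing each color separately and concatenating in $<_r$-order) compares colors before bases. So the cells of $P^{(r-1)}$ get the smallest labels, then those of $P^{(r-2)}$, and so on. Reading $\PP$ along $\QQ$ is then not weakly increasing in base, and the per-$\QQ$ identity $\sum_{\PP\mapsto\QQ}\x^{\PP}=F_{\sDes(\QQ)}$ is false. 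Already for $r=2$ and $\llambda=((1),(1))$, every $\PP=(\{a\},\{b\})$ has $b^1<_r a^0$, so every $\PP$ standardizes to the tableau with $1\in Q^{(1)}$. Hence for $\QQ=(\{1\},\{2\})$ your left side is $0$, whereas $F_{\sDes(\QQ)}=\sum_{a<b}x^{(0)}_a x^{(1)}_b$.

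The order you need is the one your ``delicate point'' actually describes, which is $\prec$, not $<_r$:
\begin{itemize}
\item compare bases first in the usual order;
\item break ties in base by giving the cell of higher color the smaller label;
\item break ties inside one tableau from left to right.
\end{itemize}
This is the standardization in the paper's proof of Lemma~\ref{lem: schur-to-Qsym}, and both directions still have to be checked. Write $b_i$ for the entry of $\PP$ in the cell of $\QQ$ labelled $i$.

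First, you need $b_1\le\cdots\le b_n$ with $b_i<b_{i+1}$ for $i\in\Des(\QQ)$. A tie inside one tableau cannot put $i+1$ in a lower row, since equal entries form a horizontal strip labelled left to right. A tie between $i\in Q^{(c)}$ and $i+1\in Q^{(c')}$ with $c<c'$ would have given $i+1$ the smaller label.

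Conversely, take any $b_1\le\cdots\le b_n$ strict on $\Des(\QQ)$ and replace $i$ by $b_i$. This gives a semistandard $r$-tableau. Rows weakly increase because $b$ does. For columns, suppose $m$ lies below $k$ in a column of $Q^{(c)}$ and $[k,m-1]$ contained no descent. Then the colors of $k,k+1,\dots,m$ would weakly decrease and hence all equal $c$, and each $i+1$ would lie weakly above $i$, contradicting that $m$ is below $k$. Its standardization is $\QQ$, because $b_i=b_{i+1}$ forces $i\notin\Des(\QQ)$. That is, $i+1$ lies weakly above and to the right of $i$ in the same tableau, or in a tableau of lower color, exactly as the tie-breaking prescribes.

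Only with this bijection in hand does summing over $\QQ$ give $s_{\llambda}=\sum_{\QQ}F_{\sDes(\QQ)}$, and hence the spanning claim.
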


By \cite[Lemma~8]{Poirier}, the $F_{\sigma}$ (for $\sigma \in S^r_n$) are related by a unitriangular change of basis to Poirier's monomial colored quasisymmetric functions, so their span, which we denote by $\QSym^r_n$, is the degree $n$ component of Poirier's ring of $r$-colored quasisymmetric functions.
The $r$-colored quasisymmetric functions of \cite{HP2010} form a proper subspace of this in general. 

It is possible to obtain an explicit expansion of $s_{\llambda}(\x)$ in terms of the $F_{\sigma}$s. The following fact
is stated and proved in the $r=2$ case in \cite{MR3629266},
and as noted by
Moustakas \cite{moustakas2023descent}, easily extends to the general version stated below; we
include a proof adapted from \cite{MR3629266}. Recall that $\sDes(\QQ)$
was defined in Definition~\ref{def:fDes-sDes}. It is easy to see that $\sDes(\QQ)$ is a signed set in the sense of
Definition~\ref{def: signed-set}.

\begin{lemma}[{\cite[Proposition~3.1]{moustakas2023descent}}]
	\label{lem: schur-to-Qsym}
	For any $r$-partition $\llambda\vdash_rn$, the following identity holds:
	\begin{equation}
		s_{\llambda}(\x)=\sum_{\QQ\in \SYT^r(\llambda)} F_{\sDes(\QQ)}(\x).
	\end{equation}
\end{lemma}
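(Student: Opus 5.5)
The plan is to reduce the identity to the classical expansion of a Schur function into Gessel's fundamental quasisymmetric functions, applied color by color. I will start from the tableau description \eqref{eq:schur-ssyt},
\[
s_{\llambda}(\x)=\sum_{\PP\in\SSYT^r(\llambda)}\x^{\PP}.
\]
I will then partition $\SSYT^r(\llambda)$ according to a standard $r$-tableau $\QQ$ obtained by \emph{standardizing} $\PP$ with respect to the total order on cells induced by the skew-shape picture of Figure~\ref{fig:random-4-tableaux}. The goal is to show that the fiber over each $\QQ\in\SYT^r(\llambda)$ contributes exactly $F_{\sDes(\QQ)}(\x)$.

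Concretely, I read the entries of $\PP$ as colored letters: an entry $a$ in $P^{(j)}$ becomes $a^{j}$. I then order all cells of $\PP$ first by color, with the cells of $Q^{(0)}$ coming first. Within one color I order by base value, and ties are broken in the usual way, left to right. This is the standardization of an ordinary semistandard tableau, applied within each color. The resulting labels $1,\dots,n$ give a standard $r$-tableau $\QQ$ of the same shape.

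The intended order is slightly different: the labels should be increasing in the sequence $i_1\le\cdots\le i_n$ of bases. This means I must instead interleave the colors. So the correct procedure is to list all $n$ cells sorted by (base value, then color in the order $0,1,\dots,r-1$, then the within-color tie-breaking rule). Label the cells $1,\dots,n$ in this order, and let $\QQ$ be the labeled $r$-tableau.

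Because each $P^{(j)}$ is semistandard, and the within-color tie-break is left to right, each $Q^{(j)}$ is standard. Writing $i_k$ for the base and $\eps(k)$ for the color in cell $k$, the sequence satisfies $i_1\le\cdots\le i_n$, and $\x^{\PP}=\prod_k x^{(\eps(k))}_{i_k}$. Moreover, $\eps(k)$ is exactly the coloring function of $\QQ$ from Definition~\ref{def:fDes-sDes}.

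It remains to verify the strictness conditions. I claim that $i_k<i_{k+1}$ is forced exactly when $k\in\Des(\QQ)$, and that conversely any weakly increasing sequence with strict ascents at $\Des(\QQ)$ arises from a unique $\PP$ with standardization $\QQ$. There are two cases.

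First, suppose $k$ and $k+1$ lie in the same $Q^{(j)}$. This is the classical case: $k+1$ lies in a lower row than $k$ if and only if equal bases are impossible there, because equal values in a semistandard tableau form a horizontal strip read left to right.

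Second, suppose $k\in Q^{(j)}$ and $k+1\in Q^{(j')}$ with $j\ne j'$. If $i_k=i_{k+1}$, the tie-break by color forces $j<j'$. Hence $i_k=i_{k+1}$ is allowed exactly when $j<j'$. But Definition~\ref{def:fDes-sDes} declares $k$ a descent when $j<j'$, so this is backwards. I would therefore reverse the color tie-break: among equal bases, higher colors come first. Then equality is allowed exactly when $j>j'$, that is, when $k\notin\Des(\QQ)$. This is consistent with the signed-set condition $\varepsilon(k)\ge\varepsilon(k+1)$ off $T$.

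For the converse, given $\QQ$ and a sequence $i_1\le\cdots\le i_n$ with strict ascents on $\Des(\QQ)$, I put $i_k$ in the cell labeled $k$. Columns are then strict and rows weak by the same horizontal-strip argument, and the standardization recovers $\QQ$.

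Summing over $\QQ\in\SYT^r(\llambda)$ then gives $\sum_{\QQ}F_{\sDes(\QQ)}(\x)$. The main obstacle is getting the tie-breaking order among colors to match the paper's descent convention. Once that order is chosen as above, everything else is the standard Gessel argument.
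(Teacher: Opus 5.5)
Once you discard the two abandoned orderings, your final standardization is exactly the paper's: sort cells by base, break ties by descending color, then go left to right within a tableau (the paper sweeps through $U^{(r-1)},\dots,U^{(0)}$ for each value). Your check that strictness is forced precisely on $\Des(\QQ)$ is the same fiber argument the paper gives, so the proof is correct; in the converse, say explicitly that a run of consecutive non-descents has weakly decreasing colors and so stays inside one tableau, since that is what lets the horizontal-strip argument give column strictness.
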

\begin{proof}
	By equation~\eqref{eq:schur-ssyt}, the left hand side is
	\begin{equation}
		s_{\llambda}(\x)=\sum_{\UU\in \SSYT^r(\llambda)}\x^\UU.
	\end{equation}
	Given such a $\UU$, we standardize it as follows: start with the smallest element $i_1$ appearing in $\UU$, and replace all occurrences of it with the integers $1,2,\ldots$ in order, going from left to right within each tableau, and going through the tableaux in the order $U^{(r-1)}, U^{(r-2)}, \dots, U^{(0)}$.
	After that, replace all occurrences of the next smallest number $i_2$ with the next batch of integers in the same fashion.
	One may verify that this process yields an element of $\SYT^r(\llambda)$.

	Now given a $\QQ\in \SYT^r(\llambda)$, consider a possible $\UU\in \SSYT^r(\llambda)$ that standardizes to $\QQ$.
	Suppose $\sDes(\QQ)=(T,\varepsilon)$. If $i\in T$, then $i+1$ occurs either below $i$ on the same tableau, or in a
	different tableau to the left of $i$. In either case, they could not have been the same number in $\UU$. On the
	other hand, if $i\not\in T$, then it is possible that $i$ and $i+1$ have taken the place of two occurrences of the
	same element. This shows that the sum of monomials $\x^\UU$ over all such $\UU$ is precisely
	$F_{(T,\varepsilon)}(\x)$, which proves our claim.
\end{proof}

\subsection{\texorpdfstring{$r$}{r}-colored super ordered set partitions}
Chan--Rhoades \cite{ChanRhoades, Rhoades2018} studied the $r$-colored ordered set partitions in their study of the generalized
coinvariant algebras for wreath products. We will recall this definition and introduce some extensions that will be useful for us. In what follows, $r$ is a fixed positive integer, and $k, n$ are non-negative integers, unless otherwise stated.
\begin{definition}[\cite{ChanRhoades}]\label{def:OP_OmP}
	An \emph{$r$-colored ordered multiset partition} of size $n$ with $k$ blocks is an ordered sequence of $k$ non-empty sets consisting of $r$-colored positive integers $S_1 \mid \cdots \mid S_k$ whose cardinalities sum up to $n$. 
    An $r$-colored ordered multiset partition of size $n$ is called an \emph{$r$-colored ordered set partition} if every integer in $\{1,\ldots, n\}$ appears as a base exactly once.
	We denote the set of $r$-colored ordered set partitions of size $n$ with $k$ blocks by $\OP^r_{n,k}$, and the set of $r$-colored ordered multiset partitions of size $n$ with $k$ blocks by $\OmP^r_{n,k}$.
\end{definition}

The following are two example elements of $\OmP^3_{8,4}$:
\begin{equation}1^21^0\mid 3^24^12^1\mid 5^02^1\mid 2^0, \quad \text{and} \quad 3^1\mid 2^05^27^1\mid 8^2\mid 1^04^06^1, \end{equation}
where we have omitted the commas between elements of each block for brevity. Notice that the same base can occur
twice in the same block with different color, and also in different blocks with the same or different colors, but
not with the same color in the same block. The second example is also an element of $\OP^3_{8,4}$.

Sagan--Swanson \cite{SSreflection} introduced the colored super ordered set partitions. We will extend to multisets, with slightly different notation.
\begin{definition}[\cite{SSreflection}]\label{def:SOmP}
	An \emph{$r$-colored super ordered multiset partition} of size $n$ with $k$ blocks is a pair $\sigma=(Z, \rho)$ where $Z$ is a possibly empty set consisting of positive integers with \emph{nonzero} colors, and $\rho \in \OmP^r_{n-|Z|,k}$. 
    We call $Z$ the \emph{zero block} of $\sigma$. If $\rho=S_1\mid \cdots\mid S_k$, we will often write $\sigma$ as $\sigma=Z\mid S_1\mid \cdots\mid S_k$. 
    If all the integers in $\{1,\ldots, n\}$ appear as a base exactly once in $\sigma$, we will say it is an \emph{$r$-colored super ordered set partition}. The set of all $r$-colored super ordered set (resp. multiset) partitions of size $n$ with $k$ blocks will be denoted $\SOP^r_{n,k}$ (resp. $\SOmP^r_{n,k}$).
\end{definition}
For example, the following are all elements of $\SOmP^3_{8,4}$: \begin{equation}1^21^1\mid 3^2\mid 4^12^1\mid 5^02^1\mid 2^0, \quad \varnothing\mid1^21^0\mid 3^24^12^1\mid 5^02^1\mid 2^0,\quad \text{and}\quad 3^1\mid 2^05^27^1\mid 8^2\mid 1^0\mid 4^06^1, \end{equation}
whose zero blocks are $\{1^2,1^1\}$, $\varnothing$, and $\{3^1\}$ respectively. The last example is also an element
of $\SOP^3_{8,4}$. Notice that the zero block never contains an element of the form $i^0$.

\begin{definition}\label{def:OMP}
	An \emph{$r$-colored ordered multiset partition into multisets} of size $n$ with $k$ blocks is a sequence of non-empty multisets $S_1\mid\cdots\mid S_k$ consisting of $r$-colored positive integers, such that $|S_1| + \cdots + |S_k| = n$.
	We denote the set of all such $\rho$ by $\OMP_{n,k}^r$.
\end{definition}

\begin{definition}\label{def:SOMP}
	An \emph{$r$-colored super ordered multiset partition into multisets} (of size $n$, and $k$ blocks) is a pair $(Z,\rho)$ where $Z$ is a possibly empty multiset consisting of positive integers with nonzero colors, and $\rho \in \OMP_{n-|Z|,k}^r$.
	We denote the set of $r$-colored super ordered multiset partition into multisets of size $n$ with $k$ blocks by $\SOMP_{n,k}^r$.
\end{definition}

The above definitions differ from those of $\OmP^r_{n,k}$ and $\SOmP^r_{n,k}$ only in that colored integers are now
allowed to repeat, even within the same block. For example, \begin{equation}1^01^0\mid 3^2\mid 4^12^1\mid 5^02^1 2^0\in \OMP^{3}_{8,4},\quad 1^11^13^2\mid 3^2\mid 4^13^2\mid 2^12^1 2^0\in \SOMP^{3}_{9,3}\end{equation}where the last example has the multiset $\{1^11^13^2\}$ as its zero block.

For $\rho = S_1 \mid \cdots \mid S_k \in \OMP^r_{n,k}$ we write $\eps(\rho)$ for the sum of the colors of all entries
of $\rho$, counted with multiplicity.

\begin{remark}
	One can view $\OP$ and $\OmP$ as subsets of $\OMP$ by replacing multisets with sets, and view $\OMP$ as a subset of $\SOMP$ with empty \emph{zero block} $Z$.
	For some statistics, we take this approach and simply define a statistic on $\SOMP$, which yields a definition for $\OP$, $\OmP$, and $\OMP$.
\end{remark}
\begin{remark}
	We will use the notation $\OP^r_{n}$ to denote the set of $r$-colored ordered set partitions of size $n$ with any number of blocks: \begin{equation}
		\OP^r_n=\bigsqcup_{0\le k\le n}\OP^r_{n,k}.
	\end{equation}The notations $\SOP^r_n$, $\OmP^r_n$, $\SOmP^r_n$, $\OMP^r_n$, and $\SOMP^r_n$ are defined analogously.
\end{remark}
A \emph{descent-starred $r$-colored word} is a pair $(w,S)$ where $w$ is a word in the $r$-colored alphabet and $S
	\subseteq \Des(w)$. The $r=1$ (uncolored) version of this for ordered set-partitions was introduced by Remmel--Wilson
\cite{Remmel-Wilson} and then generalized to ordered multiset partitions by Wilson \cite{WilsonOMP} in their extensions
of MacMahon's equidistribution theorem. The colored version appears in \cite{ChanRhoades}.

\begin{definition}
	The \emph{reading word} of $\rho \in \OmP^r_{n,k}$ is the descent-starred $r$-colored word $(g,S)$ where $g$ is the $r$-colored word obtained by writing the elements of each block of $\rho$ in $>_r$-decreasing order, and then concatenating the blocks from left to right; $S$ is the set consisting of all $i\in [n-1]$ such that the $i$th and $(i+1)$st letters in $g$ appear in the same block of $\rho$.
\end{definition}

Let us illustrate the above definition with an example. Consider \begin{equation}\rho=1^21^0\mid 3^24^12^1\mid 2^15^0\mid 2^1\in
	\OmP^3_{8,4}.\end{equation}  In order to find its
reading word, we start by writing $\rho$ so that each block is in $>_r$-decreasing order:
\begin{equation}\rho=1^01^2\mid 4^12^13^2\mid 5^02^1\mid 2^1.\end{equation}
Now we concatenate the blocks, placing a star next to every entry that lies in the same block as the next entry:
\begin{equation}1^0_*1^2 4^1_*2^1_*3^2 5^0_*2^1 2^1.\end{equation}
This gives rise to the pair $(w,S)$ where $w$ is the word $1^01^2 4^12^13^2 5^02^1 2^1$ and $S=\{1,3,4,6\}$ records
the positions of the stars. Since each block was already arranged in decreasing $>_r$-order, $S\subseteq\Des(w)$.
Clearly, one can reconstruct $\rho$ given any descent-starred $r$-colored word $(w,S)$.

Next we define the major index of $\rho\in \OmP^r_{n,k}$, which is a slight modification of the $\maj$ statistic
defined in \cite{ChanRhoades}. Recall the $\fmaj$ statistic defined on $r$-colored words.

\begin{definition}\label{def:fmaj-OmP}
	Given $\rho \in \OmP^r_{n,k}$
	with reading word $(w,S)$, the \emph{flag-major index of $\rho$} is
	\begin{equation}
		\fmaj(\rho):= \fmaj(w) - \sum_{i \in S} \left( r \left| \Des(w) \cap [i, n-1] \right| - (r-1)\right).
	\end{equation}
\end{definition}

There are two equivalent ways to extend the statistic $\fmaj$ to $\SOmP^r_{n,k}$. The first method is to introduce the
\emph{flag-descent-starred $r$-colored words}, motivated by the following observation. Note that if we na\"ively form
the descent-starred $r$-colored words from elements of $\SOmP^r_{n,k}$, this leads to an ambiguity: $\sigma=1^11^2\mid
	4^12^13^2\mid 5^02^1\mid 2^1$ (whose zero block is $\{1^1,1^2\}$) and $\sigma'=\varnothing\mid1^11^2\mid 4^12^13^2\mid
	5^02^1\mid 2^1$ (whose zero block is empty) both have the same reading word. The solution is to introduce a star at the
$0$th position that denotes whether the first non-empty block is the zero block or not.

To this end define a \emph{flag-descent starred $r$-colored word} as a pair $(w,S)$ where $w$ is a word of length $n$ in the
$r$-colored alphabet and $S \subseteq \fDes(w)$. The following definition puts them in one-to-one
correspondence with $\SOmP^r_{n}$.

\begin{definition}\label{def:reading-word-SOmP}
	The \emph{reading word} of $\sigma \in \SOmP^r_{n,k}$ is the flag-descent-starred $r$-colored word $(g,S)$ where $g$ is the $r$-colored word obtained by writing the elements of each block of $\sigma$ in $>_r$-decreasing order, and then concatenating the blocks from left to right. $S$ is the set of indices $i\in [n-1]$ such that the $i$th and $(i+1)$st letters in $g$ appear in the same block of $\sigma$, along with $0$ if the zero block of $\sigma$ is non-empty.
\end{definition}

With this, the ambiguity is resolved:
\begin{equation}
	\begin{array}{lll}
		1^11^2\mid 4^12^13^2\mid 5^02^1\mid 2^1                 & \to {}^{\phantom{0}}_*1^1_*1^2 4^1_*2^1_*3^2 5^0_*2^1 2^1           & \to (1^11^2 4^12^13^2 5^02^1 2^1,\{0,1,3,4,6\}) \\
		\varnothing \mid1^11^2\mid 4^12^13^2\mid 5^02^1\mid 2^1 & \to \phantom{{}^{\phantom{0}}_*}1^1_*1^2 4^1_*2^1_*3^2 5^0_*2^1 2^1 & \to (1^11^2 4^12^13^2 5^02^1 2^1,\{1,3,4,6\}).
	\end{array}
\end{equation}As before, $S\subseteq \fDes(g)$ holds because each block is arranged in $>_r$-decreasing order, and the zero block
contains no element of color $0$. Now $\fmaj(\sigma)$ can be defined similarly.

\begin{definition}\label{def:fmaj-SOmP}
	Given $\sigma \in \SOmP^r_{n,k}$ with reading word $(w,S)$, the \emph{flag-major index of $\sigma$} is
	\begin{equation}
		\fmaj(\sigma):= \fmaj(w) - \sum_{i \in S} \left( r \left| \fDes(w) \cap [i, n-1] \right| - (r-1)\right).
	\end{equation}
\end{definition}

The second way to define $\fmaj$ on $\sigma=(Z,\rho)\in \SOmP^r_{n,k}$ is to compute it on $\rho$, and include a
correction factor to account for the zero block. For a zero block $Z=i_1^{c_1}i_2^{c_2}\cdots i_m^{c_m}$, define

\begin{equation}
	\varepsilon'(Z):=(c_1-1)+(c_2-1)+\cdots +(c_m-1).
\end{equation}

\begin{primeddef}{\ref*{def:fmaj-SOmP}\('\)}\label{def:fmaj-SOmP-primed}
	Given $\sigma=(Z,\rho)\in \SOmP^r_{n,k}$  with $\rho\in \OmP^r_{\ell,k}$, the \emph{flag-major index} of $\sigma$ is
	\begin{equation}
		\fmaj(\sigma):= \varepsilon'(Z)+\fmaj(\rho).
	\end{equation}
\end{primeddef}

Let us verify that Definitions~\ref{def:fmaj-SOmP} and~\ref{def:fmaj-SOmP-primed} are equivalent; this equivalence is used in Section~\ref{subsec:equidistribution}.

\begin{lemma}\label{lem:fmaj-equiv}
	Let $\sigma=(Z,\rho)\in \SOmP^r_{n,k}$ where $\rho\in \OmP^r_{\ell,k}$,  and let $(w,T)$ be the reading word of $\sigma$ (Definition~\ref{def:reading-word-SOmP}), so that $T\subseteq \fDes(w)$. Then \begin{equation}\varepsilon'(Z)+\fmaj(\rho)=\fmaj(w) - \sum_{i \in T} \left( r \left| \fDes(w) \cap [i, n-1] \right| - (r-1)\right).\end{equation}
\end{lemma}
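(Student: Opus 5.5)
The plan is a direct computation. I will split the reading word of $\sigma$ into the part coming from the zero block and the part coming from $\rho$, and compare the two sides term by term.

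Let $m = |Z|$. Let $z$ be the elements of $Z$ written in $>_r$-decreasing order, and let $(u,S)$ be the reading word of $\rho$. Then the reading word of $\sigma$ is $w = z u$, with $\ell = n-m$, and
\[ T = S' \cup (m+S), \]
where $S' = \emptyset$ if $Z=\varnothing$ and $S' = \{0,1,\dots,m-1\}$ otherwise. Here $m+S$ means the set $S$ shifted by $m$.

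\textbf{Case $Z = \varnothing$.} In this case $w=u$ and $T=S\subseteq[n-1]$. The statistic $\fmaj(w)$ involves only $\Des(w)$ and the colors, so $\fmaj(w)=\fmaj(u)$. For $i\ge 1$ we have $\fDes(w)\cap[i,n-1]=\Des(u)\cap[i,n-1]$. Hence the right side equals $\fmaj(\rho)$ exactly, as in Definition~\ref{def:fmaj-OmP}, and $\varepsilon'(\varnothing)=0$.

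\textbf{Case $m\ge 1$: the descent structure of $w$.} Since $Z$ is a set and $z$ is strictly $>_r$-decreasing, positions $1,\dots,m-1$ are all descents of $w$. Position $0$ is a flag descent because $z_1$ has nonzero color. Let $\delta\in\{0,1\}$ record whether $m\in\Des(w)$; I will not need to decide its value, since it cancels. The descents of $w$ beyond $m$ are exactly $m+\Des(u)$. Write $d=\des(u)$. Then
\[ \maj(w)=\binom{m}{2}+\delta m + md + \maj(u), \qquad \eps(w)=\eps(z)+\eps(u). \]

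\textbf{Case $m\ge 1$: the correction terms.} For $i=m+s$ with $s\in S$, the set $\fDes(w)\cap[i,n-1]$ is a translate of $\Des(u)\cap[s,\ell-1]$. These terms therefore reproduce exactly the correction terms in $\fmaj(\rho)$. For $i\in\{0,\dots,m-1\}$, the count $|\fDes(w)\cap[i,n-1]|$ equals $(m-i)+\delta+d$; at $i=0$ this holds because $0$ itself is a flag descent. Summing over these $m$ values of $i$ gives
\[ r\binom{m+1}{2} + rm(\delta+d) - m(r-1). \]

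\textbf{Case $m\ge 1$: conclusion.} Subtracting, the right side minus $\fmaj(\rho)$ equals
\[ r\binom{m}{2}+r\delta m+rmd+\eps(z)-r\binom{m+1}{2}-rm(\delta+d)+m(r-1) = \eps(z)-m = \varepsilon'(Z). \]

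The only delicate point is the bookkeeping of the boundary descent at position $m$, which is the unknown $\delta$, and of the flag descent at $0$. The computation shows that both cancel regardless of their values.
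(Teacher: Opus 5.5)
Your proof is correct and follows the paper's argument essentially step for step. Like the paper, you split off the zero block and track the boundary descent at position $m$ by an indicator ($\eta$ there, your $\delta$). You then show that the surplus $r\binom{m}{2}+rmd+rm\delta$ in $\fmaj(w)$ is cancelled by the new stars at $0,\dots,m-1$, up to the extra $m$ that turns $\eps(z)$ into $\varepsilon'(Z)$; the only cosmetic difference is that you treat $Z=\varnothing$ as a separate case, whereas the paper absorbs it uniformly by allowing $m=0$ and setting $\eta=0$.
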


\begin{proof}
	Let $(g,S)$ be the reading word of $\rho$. The claim is equivalent to
	\begin{multline}
		\varepsilon'(Z)+\varepsilon(g)+r\sum_{i\in \Des g}i- \sum_{i \in S}
		\left(r\,\#\left(\Des(g) \cap [i,\ell-1]\right)-(r-1)\right)\\ =\varepsilon(w)+r\sum_{i\in \fDes w}i- \sum_{i \in T}
		\left(r\,\#\left(\fDes(w) \cap [i,n-1]\right)-(r-1)\right).
	\end{multline}
	Let $m=|Z|$, $d=|\Des(g)|$, and let $\eta\in\{0,1\}$ indicate whether $m \in \fDes(w)$, that is, whether there is a descent between the last entry of $Z$ and the first entry of $g$. Set $\eta=0$ if $Z$ or $g$ is empty.
	The difference $\varepsilon(w)-(\varepsilon(g)+\varepsilon'(Z))$ counts $1$ for each element of $Z$, and thus equals $m$.

	Consider the difference $r\sum_{i\in \fDes w}i-r\sum_{i\in \Des g}i$.
	The first sum counts the positions $0,1,\ldots, m-1$ as flag descents, which contributes a surplus of $r(0+1+\cdots+(m-1))=r\binom{m}2$ to the sum.
	Further, each index $i\in \Des g$ gives rise to the index $i+m\in \fDes w$, contributing an additional $r\cdot dm$.
	Thus
	\begin{equation}
		r\sum_{i\in \fDes w}i-r\sum_{i\in \Des g}i=rdm+r\binom{m}{2} + rm\eta.
	\end{equation}

	Finally, consider the difference
	\begin{equation}
		\sum_{i \in T} \left(r\,\#\left(\fDes(w) \cap [i,n-1]\right)-(r-1)\right)-\sum_{i \in S}
		\left(r\,\#\left(\Des(g) \cap [i,\ell-1]\right)-(r-1)\right).\end{equation}

	Every element $i\in S$ contributes to a corresponding element $i+m\in T$, and these contributions cancel out. 
    The additional elements in $T$, the new stars, are $0,1,\ldots, m-1$. 
    For $0\leq t\leq m-1$, there are $m-t+d+\eta$ flag descents weakly to the right of $t$: the $m-t$ descents coming from the zero block, the $d$ shifted descents of $g$, and, when $\eta=1$, the boundary descent. Hence the contribution of these new stars is
	\begin{equation}
		\sum_{t=0}^{m-1} \left(r(m-t+d+\eta)-(r-1)\right) = rdm+r\binom{m}{2}+rm\eta+m.
	\end{equation}
	Combining these, the conclusion follows.
\end{proof}

For concreteness, let us compute these statistics on $\sigma=1^11^2\mid 4^02^03^1\mid 1^12^2\mid 2^2\in \SOmP^3_{8,3}$.
Its reading word is given by $(w,S)$ where \begin{equation}w=1^11^24^02^03^11^12^22^2,\quad S=\{0,1,3,4,6\}.\end{equation}The word $w$ has
descents at positions $1,3,4,5,6$, so $\fDes(w)=\{0,1,3,4,5,6\}$. Using Definition \ref{def:fmaj-SOmP}, we get
\begin{align*}
	\fmaj(\sigma)= & \varepsilon(w)+r\cdot\sum_{i\in \Des(w)}i-\sum_{i \in S} \left( r \left| \fDes(w) \cap [i, n-1] \right| - (r-1)\right) \\
	=              & (1+2+0+0+1+1+2+2)+3\cdot(1+3+4+5+6)                                                                                    \\
	               & -\left[(3\cdot 6-2)+(3\cdot 5-2)+(3\cdot 4-2)+(3\cdot 3-2)+(3\cdot 1-2)\right]                                         \\
	               & =19.
\end{align*}

Alternatively, we may view $\sigma$ as $(Z,\rho)$ where $Z=\{1^1,1^2\}$ and \begin{equation}\rho=4^02^03^1\mid 1^12^2\mid 2^2\in\OmP^3_{6,3},\end{equation}
with reading word $(4^02^03^11^12^22^2,\{1,2,4\})$. 
This word has descent set $\{1,2,3,4\}$, and so using
Definition~\ref{def:fmaj-OmP} we have
\begin{align*}
	\fmaj(\rho)= & (0+0+1+1+2+2)+3\cdot(1+2+3+4)             \\
	             & -[(3\cdot 4-2)+(3\cdot 3-2)+(3\cdot 1-2)] \\
	=            & 18
\end{align*}
which, using Definition~\ref{def:fmaj-SOmP-primed}, yields the matching answer $\fmaj(\sigma)=(1-1)+(2-1)+\fmaj(\rho)=19$.

The final set of definitions are two inversion-like statistics: in contrast with $\fmaj$, these will be defined on all
of $\SOMP^r_{n,k}$ rather than just $\SOmP^r_{n,k}$. We introduce an alternative ordering
\begin{equation}
	1^{r-1} \prec 1^{r-2} \prec \cdots \prec 1^0 \prec 2^{r-1} \prec \cdots \prec 2^0 \prec \cdots
\end{equation}
on the $r$-colored alphabet.
Simply put, this order compares colored integers first by the values of their bases (using the natural order on integers), and then uses color to break ties (higher color being smaller).
\begin{definition}
	For $\rho = S_1 \mid \cdots \mid S_k \in \OMP^r_{n,k}$, we write $\rho$ so that the entries in each block are sorted in decreasing $\succ$-order.

	A pair $(i,S_j)\in [n]\times \{S_1,\ldots, S_k\}$ is a \emph{weak (resp. strong) inversion} if $s_i$, the $i$th entry in $\rho$ from the left, appears in a block strictly to the left of $S_j$, and $s_i \succeq \min_{\prec} S_j$ (resp.
	$s_i \succ \min_{\prec} S_j$).
	We denote the set of weak (resp.
	strong) inversions of $\rho$ by $\wInv(\rho)$ (resp. $\sInv(\rho)$).
\end{definition}

\begin{definition}\label{def:sinv-winv}
	Let $\sigma=(Z,\rho) \in \SOMP^r_{n,k}$, where $\rho\in \OMP^r_{\ell,k}$, be given.
	The \emph{weak (resp.
		strong) inversion number} is defined by
	\begin{align}
		\winv(\sigma) & := r |\wInv(\rho)| + \eps(\rho) + (r-1) (\ell-k) + \eps'(Z), \\
		\sinv(\sigma) & := r |\sInv(\rho)| + \eps(\rho) + (r-1) (\ell-k) + \eps'(Z).
	\end{align}
\end{definition}

Observe that these statistics almost entirely ignore the zero block in their definitions, save for the
$\varepsilon'(Z)$ terms. The reader may compare this approach to Definition~\ref{def:fmaj-SOmP-primed} for $\fmaj$. As
an example, consider \begin{equation}\sigma=(Z,\rho)=3^21^11^1\mid 3^2\mid 4^13^2\mid 2^02^12^1\in \SOMP^3_{9,3},\end{equation} where $Z=3^21^11^1$ and $\rho=3^2\mid 4^13^2\mid 2^02^12^1\in \OMP^3_{6,3}$.
The weak inversion pairs are $(1,S_2)$, $(1,S_3)$, $(2,S_3)$, $(3,S_3)$. 
For instance, $(2,S_3)$ is a weak inversion pair because $4^1$ (the $2^{\text{nd}}$ entry from the left), lies to the left of $2^1$ (the $\prec$-minimal entry in $S_3$), and satisfies $4^1\succeq 2^1$. Only the last three of these pairs are also strong inversion pairs. Therefore
\begin{align*}
	\winv(\sigma)=3\cdot 4+(2+1+2+0+1+1)+2\cdot(6-3)+(1+0+0)=26, \\
	\sinv(\sigma)=3\cdot 3+(2+1+2+0+1+1)+2\cdot(6-3)+(1+0+0)=23.
\end{align*}
Note that the statistics $\winv$ and $\sinv$ coincide on $\SOP^r_{n,k}$.

\section{Main results for wreath product groups}\label{sec:main-results-wreath}

In this section, we prove our main result, Theorem~\ref{thm:main-wreath}, which states:
\begin{equation}\label{eq: wreath-poduct-Frob-formula}
	\grFrob(R^{(1,1)}_{\Z_r\wr\mathfrak{S}_n};q;z)=\sum_{\QQ\in \SYT^r_n}q^{\fmaj(\QQ)}\prod_{j=1}^{\fdes(\QQ)}\left(1+\frac{z}{q^{rj-r+1}}\right)s_{\shape(\QQ)}(\x).
\end{equation}
Our method is to show that both sides of equation~\eqref{eq: wreath-poduct-Frob-formula} satisfy the same recursion under the skewing operators $e_j(\x^{(i)})^\perp$, utilizing the following standard lemma (see for example \cite{ChanRhoades}).
\begin{lemma}\label{lem: perp-recursion-equivalence}
	Fix $n \geq 1$. For $F(\x),G(\x)\in\Lambda_n^r(\x)$ with coefficients in $\C(q)$, we have $F(\x)=G(\x)$ if and only if
	\begin{equation}
		e_j\left(\x^{(i)}\right)^\perp F(\x)=e_j\left(\x^{(i)}\right)^\perp G(\x)
	\end{equation}
	for all $j\ge 1$ and $0\le i\le r-1$.
\end{lemma}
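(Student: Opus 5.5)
The forward implication is immediate, since $e_j(\x^{(i)})^\perp$ is a linear operator. For the converse I plan a duality argument: set $H := F - G \in \Lambda^r_n(\x)\otimes \C(q)$, assume that every skewing $e_j(\x^{(i)})^\perp H$ vanishes, and show that $H$ pairs to zero with a basis of $\Lambda^r_n(\x)$.

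The steps, in order, are as follows.

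\emph{Step 1 (extension of scalars).} Extend the Hall inner product $\C(q)$-bilinearly to $\Lambda^r(\x)\otimes\C(q)$. The $s_{\llambda}$ remain orthonormal, so the pairing restricted to the degree-$n$ component is still nondegenerate. Likewise $f^\perp$ is extended $\C(q)$-linearly, and the defining adjunction $\langle f g, h\rangle = \langle g, f^\perp h\rangle$ continues to hold.

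\emph{Step 2 (a basis adapted to the hypothesis).} Use the fact recorded in the background that $\{e_{\llambda} : \llambda \vdash_r n\}$ is a basis of $\Lambda^r_n(\x)$. This follows because $\Lambda^r = \Lambda(\x^{(0)})\otimes\cdots\otimes\Lambda(\x^{(r-1)})$ and the $e_\lambda$ form a basis of each tensor factor. Since $n \ge 1$, every $\llambda \vdash_r n$ has some nonempty component $\lambda^{(i)}$. Pulling off one part $j = \lambda^{(i)}_1 \ge 1$ lets me write
\[
e_{\llambda} = e_j(\x^{(i)})\, e_{\mmu},
\]
where $\mmu$ is the $r$-partition of $n-j$ obtained by deleting that part.

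\emph{Step 3 (vanishing of the pairing).} By adjunction,
\[
\langle H, e_{\llambda}\rangle = \langle e_j(\x^{(i)})^\perp H, e_{\mmu}\rangle = 0 .
\]
Thus $H$ is orthogonal to a basis of $\Lambda^r_n(\x)\otimes\C(q)$. Nondegeneracy from Step 1 then gives $H = 0$, that is, $F = G$.

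There is no serious obstacle in this argument. The only points needing care are bookkeeping ones. First, the hypothesis $n \ge 1$ is essential, because it guarantees that each $e_{\llambda}$ contains a factor $e_j(\x^{(i)})$ with $j \ge 1$; in degree $0$ the statement fails. Second, the adjunction and the nondegeneracy must survive the extension of coefficients to $\C(q)$. Both are clear once everything is expanded in the Schur basis and the pairing is computed coefficientwise.
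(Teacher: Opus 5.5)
Your proposal is correct and is essentially the paper's argument. The paper justifies the lemma in one line: the $e_j(\x^{(i)})$ are algebraically independent generators of $\Lambda^r(\x)$, so the products $e_{\llambda}$ form a basis, and the Hall inner product is nondegenerate; your adjunction argument against the $e_{\llambda}$ basis spells out exactly this.
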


Lemma~\ref{lem: perp-recursion-equivalence} holds because $\{ e_j(\x^{(i)}) : j \geq 1, \, \, 0 \leq i \leq r-1 \}$ is an algebraically independent generating set of $\Lambda^r(\x)$ and the Hall inner product is nondegenerate. 
In Section~\ref{subsec:recursion}, we derive the recursion satisfied by the algebraic left-hand side of equation~\eqref{eq: wreath-poduct-Frob-formula}.
In Sections~\ref{subsec:equidistribution}--\ref{subsec:recursion-ideal}, we show that the combinatorial right-hand side satisfies the same recursion. 

\subsection{Frobenius series recursion}\label{subsec:recursion}
We recall two results from \cite{BR} on the structure of $R^{(1,1)}_{\Z_r\wr\mathfrak{S}_n}$: a monomial basis, originally conjectured by Sagan--Swanson \cite{SSreflection}, and the ungraded module structure. 

For $J\subseteq[n]$, define the ordered product $\theta_J:=\prod_{j\in J}\theta_j$ with factors in increasing order of indices. 
The \emph{$(J,r)$-staircase} is the sequence $(\stair(J,r)_1,\ldots, \stair(J,r)_n)$ with entries defined as follows.
Let $[n]\setminus J=\{u_1<\cdots<u_{n-|J|}\}$ and $u_{n-|J|+1}:=n+1$.
Set $\stair(J,r)_{u_i}:=ir-1$, and for $j\in J$, set $\stair(J,r)_j:=ir-2$, where $i$ is the smallest index with $j<u_i$.

\begin{theorem}[\cite{BR}]\label{thm: monomial-basis}
	The collection of monomials
	\begin{equation}
		\{x_1^{a_1}\cdots x_n^{a_n}\theta_J:a_i\le\stair(J,r)_i, J\subseteq [n]\}
	\end{equation}
	descends to a basis of $R^{(1,1)}_{\Z_r\wr\mathfrak{S}_n}$.
\end{theorem}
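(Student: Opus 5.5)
The plan is to prove spanning and linear independence separately, one fermionic degree at a time. Fix $J \subseteq [n]$ with $|J| = m$. Since $I^{(1,1)}_{\Z_r\wr\mathfrak{S}_n}$ is bihomogeneous, $R^{(1,1)}_{\Z_r\wr\mathfrak{S}_n}$ splits by $\bm{\theta}$-degree, and it suffices to treat each degree $m$ separately. Throughout I use the generating set of Proposition~\ref{prop:Weyl-wreath-bf} for $(k,j)=(1,1)$. It consists of the bosonic power sums $p_{ar;0} = \sum_i x_i^{ar}$ and the fermionic ones $p_{ar-1;1} = \sum_i x_i^{ar-1}\theta_i$, for $1 \le a \le n$.

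\textbf{Spanning.} Fix a term order that compares the $\bm{\theta}$-part $\theta_J$ first (say lexicographically on $J$) and then $x$-monomials in lex order with $x_n > \cdots > x_1$ or its reverse, whichever makes the staircase rows come out right. For every exponent vector with $a_i > \stair(J,r)_i$ for some $i$, I will exhibit an element of the ideal whose leading term is $x^a\theta_J$.

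\begin{itemize}
\item In the purely bosonic case ($J=\varnothing$), take the Demazure/Garsia-type elements $h_{k}(x_i^r,\dots,x_n^r)\cdot x_i^{r-1}$, i.e.\ the complete homogeneous symmetric functions in the $r$-th powers of a terminal set of variables. They lie in the ideal by the usual Newton identity argument, and their leading terms are the monomials $x_i^{(n-i+1)r-1}$ and the like, which cut out the Artin--Stembridge staircase $(ir-1)$.
\item For general $J$, use the fermionic analogues. Contract or multiply the elements above against $\sum_i x_i^{ar-1}\theta_i$, and use the identities expressing products $\theta_{j}\cdot(\text{bosonic element})$ modulo the ideal. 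This produces elements whose leading terms lower the allowed exponent of $x_j$ from $ir-1$ to $ir-2$ exactly for $j\in J$, and shift the staircase indices to run over $[n]\setminus J$.
\end{itemize}

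This is the wreath analogue of the Rhoades--Wilson straightening for $\mathfrak{S}_n$. Once all leading terms are in hand, a standard leading-term argument shows that the staircase monomials span.

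\textbf{Linear independence.} The main obstacle is to show that the spanning set is independent, which amounts to the reverse dimension inequality. For this I would pass to the harmonic space: the orthogonal complement of the ideal under the superspace pairing in which $x_i$ acts as $\partial/\partial x_i$ and $\theta_i$ acts as contraction. For each $J$ I would construct explicit harmonic elements. These are built from the product $\prod_{i}x_i^{r-1}$ times a superspace Vandermonde, namely $\partial$-images of $\prod_{i<j}(x_i^r-x_j^r)$ suitably twisted by $\theta_J$. The key requirement is that the lowest terms, in the opposite term order, of these harmonic elements are exactly the staircase monomials $x^a\theta_J$.

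Checking that these elements are annihilated by every $p_{ar;0}$ and $p_{ar-1;1}$ is the delicate computation. The fermionic operators $\sum_i \partial_i^{ar-1}\partial_{\theta_i}$ must kill them, which forces the specific $-2$ shift at the positions in $J$. If this direct verification proves too messy, the fallback is an orbit-harmonics or filtration argument. That would give the inequality $\dim R^{(1,1)}_{\Z_r\wr\mathfrak{S}_n} \ge \#\{\text{staircase monomials}\}$, by degenerating a quotient by a point-locus ideal whose dimension is computable. Once the two dimensions agree, the spanning set is a basis.
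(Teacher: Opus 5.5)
The paper gives no proof of this statement; it is quoted from \cite{BR}. Judged on its own, your proposal has a genuine gap: both halves are stated as goals rather than carried out, and together they are the whole theorem.

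\textbf{Spanning.} In fermionic degree $m>0$ you need, for every $J$ and every $a\not\le\stair(J,r)$, an element of $I:=I^{(1,1)}_{\Z_r\wr\mathfrak{S}_n}$ with leading term $x^a\theta_J$. For a position-over-term order this says that each ideal
\begin{equation*}
K_J=\Bigl\{f\in\C[\bm{x}] : f\,\theta_J\in I+\textstyle\sum_{K<J}\C[\bm{x}]\,\theta_K\Bigr\}
\end{equation*}
has initial ideal containing $x_i^{\stair(J,r)_i+1}$ for $i=1,\ldots,n$. Independence then amounts to $\dim\C[\bm{x}]/K_J=\prod_i(\stair(J,r)_i+1)$.

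You fix neither the order on the $\theta_J$ nor the order on the $x$'s, and you write down no such elements. The one mechanism you suggest is not valid: contraction by $\partial/\partial\theta_i$ does not preserve $I$. It sends the generator $p_{r-1;1}=\sum_k x_k^{r-1}\theta_k$ to $x_i^{r-1}$, which is not in $I$, since the $\bm{\theta}$-degree-zero part of $I$ is $I^{(1,0)}_{\Z_r\wr\mathfrak{S}_n}$, generated in degrees $\ge r$.

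Even the bosonic step is misstated. The right elements are $h_i(x_i^r,\ldots,x_n^r)\in I$. Their leading term for lex with $x_1>\cdots>x_n$ is $x_i^{ir}$, which matches $\stair(\varnothing,r)_i=ir-1$. The leading terms $x_i^{(n-i+1)r-1}$ you claim would cut out the wrong box. For $J\neq\varnothing$ the required straightening is genuinely nontrivial: already for $r=1$ this is the basis established in \cite{ACKMR2024}. That is exactly the step you leave blank.

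\textbf{Independence.} This half is equally unsupported.
\begin{itemize}
\item The harmonic elements are never defined; \emph{suitably twisted by $\theta_J$} is not a construction.
\item Their annihilation by the operators $\sum_i\partial_{x_i}^{ar-1}\partial_{\theta_i}$, which you yourself flag as the delicate point, is not verified.
\item That their extremal terms are exactly the staircase monomials is assumed, not shown.
\end{itemize}
Yet the lower bound $\dim R^{(1,1)}_{\Z_r\wr\mathfrak{S}_n}\ge\#\{\text{staircase monomials}\}$ is indispensable, because leading-term spanning only gives the reverse inequality.

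The fallback does not fill the hole. Orbit harmonics degenerates ideals of finite point loci in $\C^n$ and has no ready-made analogue for the anticommuting factor $\bigwedge[\bm{\theta}]$. Invoking it would require a new idea, not a routine computation.

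As written, the proposal reduces the theorem to two unproved lemmas: a straightening statement for every $\theta_J$-component, and an independent dimension lower bound in every fermionic degree. These are precisely what a proof must supply.
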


The ungraded module structure of $R^{(1,1)}_{\Z_r\wr\mathfrak{S}_n}$ was described in \cite{BR} through a $\Z_r\wr\mathfrak{S}_n$-action on the $r$-ified ordered set partitions introduced there.
These are in bijection with $\SOP^r_n$: an element of $\SOP^r_n$ corresponds to the $r$-ified ordered set partition obtained by decreasing the color of each element of the zero block by $1$. 
We restate their results in terms of $\SOP^r_n$, transported through this bijection and the change of conventions of Remark~\ref{rmk:conventions}.

To describe the $\Z_r \wr \mathfrak{S}_n$-module structure on the $\C$-vector space $\C[\SOP^r_n]$, it suffices to give the actions of the subgroups $\mathfrak{S}_n$ and $\Z_r^n$ on the basis $\SOP^r_n$.

An element $w \in \mathfrak{S}_n$ acts on $\sigma\in \SOP^r_n$ by permuting the letters $\{1,\ldots, n\}$ while fixing the colors in place. 
For example, if $n=6$, $r=4$, $\sigma=(2^23^3\mid 1^05^36^2\mid 4^0)\in \SOP^4_6$, and $w=[1,3,6,5,4,2]$ in one-line notation, then
\begin{equation}
	w\cdot \sigma= (3^26^3\mid 1^04^32^2\mid 5^0)=(3^26^3\mid 1^02^24^3\mid 5^0).
\end{equation}
This induces an action of the subgroup $\mathfrak{S}_n$ of $\Z_r \wr \mathfrak{S}_n$ on $\C[\SOP^r_n]$ by linear extension.

An element $g=(\zeta^{c_1},\ldots, \zeta^{c_n})$ of the diagonal subgroup $\Z_r^n$ acts as follows. Consider $i^k$ appearing in $\sigma$.
If $i^k$ appears in a non-zero block of $\sigma$, then $i$ appears with the color $(k+c_i)\bmod r$ in $g\cdot \sigma$.
If $i^k$ appears in the zero block, then $i$ keeps the same color $k$, but $\sigma$ gets scaled by a factor of $\zeta^{(k-1)c_i}$.
For example, if $g=(\zeta^1,\zeta^3,\zeta^2,\zeta^1,\zeta^0,\zeta^2)$, then
\begin{equation}
	g\cdot\sigma=g\cdot(2^23^3\mid 1^05^36^2\mid 4^0)=\zeta^{(2-1)\cdot 3}\cdot\zeta^{(3-1)\cdot 2}\cdot(2^23^3\mid 1^15^36^0\mid 4^1).
\end{equation}

Given this action, we have the following result; $\det=\sign\cdot\chi$ is the determinant representation of Section~\ref{ss:rep_wr}.
\begin{theorem}[\cite{BR}]\label{thm: ungraded-iso}
	As ungraded $\Z_r\wr\mathfrak{S}_n$-modules,
	\begin{equation}
		R^{(1,1)}_{\Z_r\wr\mathfrak{S}_n}\cong\C[\SOP^r_n]\otimes\det.
	\end{equation}
\end{theorem}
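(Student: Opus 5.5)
We will compare characters rather than construct an explicit isomorphism. Both sides are finite-dimensional $\Z_r\wr\mathfrak{S}_n$-modules, so it suffices to show that their Frobenius images agree. By Lemma~\ref{lem:d-dimension-interpretation} with $d=0$, and because $\{e_{\bbeta}\}_{\bbeta\vdash_r n}$ is a basis of $\Lambda^r(\x)_n$, this reduces to proving
\begin{equation*}
\dim_\C\bigl(\varepsilon^{(0)}_{\bbeta}\cdot R^{(1,1)}_{\Z_r\wr\mathfrak{S}_n}\bigr)=\dim_\C\bigl(\varepsilon^{(0)}_{\bbeta}\cdot(\C[\SOP^r_n]\otimes\det)\bigr)
\end{equation*}
for every $r$-partition $\bbeta$. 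Twisting by $\det$ only changes $\sign^{(0)}_{\bbeta}$ into another linear character of $\Z_r\wr\symm_{\bbeta}$. It is therefore cleaner to compute $\varepsilon^{(d)}_{\bbeta}$ applied to $\C[\SOP^r_n]$ directly, for the appropriate twisted character.

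\emph{Combinatorial side.} $\C[\SOP^r_n]$ is a monomial representation: $\Z_r\wr\mathfrak{S}_n$ permutes the basis $\SOP^r_n$ up to the scalars $\zeta^{(k-1)c_i}$ coming from the zero block. For such a module, the image of a twisted antisymmetrizer has dimension equal to the number of $\Z_r\wr\symm_{\bbeta}$-orbits on $\SOP^r_n$ on which the relevant character, restricted to the stabilizer, matches the scalar action. We will describe orbit representatives explicitly. Inside each block of $\bbeta$, the antisymmetrizer forces the letters of that block to lie in distinct blocks of $\sigma$, which is the sign part. The colors of those letters, or their membership in the zero block with a prescribed color, are determined by the $\chi$-part of the character. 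Counting these representatives gives a product or sum of binomial-type coefficients in terms of ordered set partitions with multiplicity constraints, equivalently counts of elements of $\SOmP^r_n$ with prescribed content.

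\emph{Algebraic side.} Here we use the monomial basis of Theorem~\ref{thm: monomial-basis}. First we obtain an upper bound. The space $\varepsilon^{(0)}_{\bbeta}R$ is spanned by the images $\varepsilon^{(0)}_{\bbeta}(x^a\theta_J)$ of staircase monomials. Many of these vanish, and others coincide up to sign: for instance, equal $(a_i,[i\in J])$ data within a $\bbeta$-block, or exponents in the wrong residue class modulo $r$ for the $\chi$-twist, kill the antisymmetrization. The surviving classes are indexed by $\bbeta$-sorted staircase data, and we would show they are in bijection with the orbit representatives above. For the matching lower bound, we would show linear independence of these antisymmetrized monomials in $R$. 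The approach would be a leading-term argument with respect to the term order underlying Theorem~\ref{thm: monomial-basis}: the leading term of $\varepsilon^{(0)}_{\bbeta}(x^a\theta_J)$ for a $\bbeta$-sorted staircase datum is that monomial itself, with distinct data giving distinct leading standard monomials.

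\emph{Main obstacle.} We expect the hardest step to be the lower bound, namely that antisymmetrization does not create cancellation modulo $I^{(1,1)}$. The staircase basis is defined by a Gröbner-type normal form, but the group action does not preserve standard monomials. Our first attempt would be to choose a term order compatible with sorting within $\bbeta$-blocks, so that each $\varepsilon_{\bbeta}$-image has a unique standard leading monomial. If that fails, we would fall back on an inductive argument on $n$ using the recursive structure of the staircase, that is, removing the variables of the last $\bbeta$-block.
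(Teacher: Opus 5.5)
The paper does not prove this theorem. It is imported from \cite{BR}, and inside the paper its only role is to supply the lower bound in Step~3 of Lemma~\ref{lem:shifted-basis}. Your outline is that lemma run backwards: your upper bound is its Step~1 and your orbit count is its Step~3. So everything rests on the inequality $\dim_\C(\varepsilon^{(0)}_{\bbeta}\cdot R^{(1,1)}_{\Z_r\wr\mathfrak{S}_n})\geq\dim_\C(\varepsilon^{(0)}_{\bbeta}\cdot(\C[\SOP^r_n]\otimes\det))$, i.e.\ that the antisymmetrized staircase monomials are independent modulo $I^{(1,1)}$. That inequality is the whole content of the theorem, and your proposal leaves it as a hope.

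The leading-term mechanism you propose already fails for $r=1$, $n=2$, $\bbeta=(2)$. The bidegree-$(0,1)$ part of $I^{(1,1)}_{\mathfrak{S}_2}$ is spanned by $\theta_1+\theta_2$, and $\theta_2$ is a staircase monomial while $\theta_1$ is not, since $\stair(\{1\},1)_1=-1$. Any term order whose standard monomials are the staircase monomials must therefore have $\theta_1>\theta_2$; Theorem~\ref{thm: monomial-basis} supplies no such order in any case. With that order, the leading term of $\varepsilon^{(0)}_{(2)}\theta_2=\theta_2-\theta_1$ is the non-standard $\theta_1$. Sorting $J$ to the front of the batch instead makes the datum itself non-standard.

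In general, the sorted form puts $J$ at the end of each batch, where the staircase bounds are largest. The permutations in $\symm_{\bbeta}$ that move $J$-positions earlier can produce non-standard monomials whose normal forms you do not control. That is exactly where cancellation could occur, and your fallback induction on $n$ is not specified.

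Two of your counting rules are also wrong.

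\emph{Combinatorial side.} On $\C[\SOP^r_n]\otimes\det$ the relevant character is $\sign^{(0)}_{\bbeta}\cdot\det$. This is trivial on the permutation matrices in $\symm_{\bbeta}$, because the two signs cancel. So in the permutation directions you are symmetrizing, not antisymmetrizing, and letters of one batch may share a block. This is why the paper's $(d,\bbeta)$-surviving elements require only weak block order within a batch, and why the count is of multiset type (as for $\SOMP^r_n$) rather than of $\SOmP^r_n$ type. For $r=1$, $n=2$, $\bbeta=(2)$, your rule gives one orbit, $\{1\mid 2,\,2\mid 1\}$. But $\varepsilon^{(0)}_{(2)}\cdot R^{(1,1)}_{\mathfrak{S}_2}$ is $2$-dimensional, spanned by the images of $x_2-x_1$ and $\theta_2-\theta_1$.

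\emph{Algebraic side.} Dually, equal exponents at two positions of $J$ in one batch do not kill the antisymmetrization, since the transposition also reverses $\theta_i\theta_k$. This is the reason for the weakly increasing condition in Definition~\ref{def: shifted-antisym-basis}. For instance, with $r=1$, $\varepsilon^{(0)}_{(3)}(\theta_2\theta_3)$ spans the sign component of $R^{(1,1)}_{\mathfrak{S}_3}$ in bidegree $(0,2)$.
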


Following Lemma~\ref{lem:d-dimension-interpretation}, we describe an explicit bihomogeneous basis of
$\varepsilon^{(d)}_\bbeta\cdot R^{(1,1)}_{\Z_r\wr\mathfrak{S}_n}$. 
For $d=0$ and $\bbeta$ an $r$-partition, this is \cite[Corollary 4.22]{BR}; the general case is a simple extension.

A \emph{signed $r$-composition} is a pair of $r$-compositions $(\bbeta,\ggamma)$ with $\beta^{(i)}\ge \gamma^{(i)}$ componentwise for each $0\le i\le r-1$; we write this as $\ggamma\le\bbeta$. 
For such a signed $r$-composition, let $B^{(i)}_j$ and $G^{(i)}_j$ be the partial sums
\begin{align}
	B^{(i)}_j&:= |\beta^{(0)}|+|\beta^{(1)}|+\cdots + |\beta^{(i-1)}|+\beta_1^{(i)}+\cdots +\beta_j^{(i)}, \label{eq:bigB}    \\
	 G^{(i)}_j&:= |\gamma^{(0)}|+|\gamma^{(1)}|+\cdots+|\gamma^{(i-1)}|+\gamma_1^{(i)}+\cdots +\gamma_j^{(i)}. \label{eq:bigG}
\end{align}
An $r$-composition $\bbeta$ of $n$ divides $[n] = \{1,\ldots,n\}$ into intervals of $\beta^{(i)}_j$ elements: we say the \emph{$\beta^{(i)}_j$-batch} of $[n]$ is \begin{equation}
	\{m\in \Z:B^{(i)}_j-\beta^{(i)}_j+1\le m\le B^{(i)}_j\}.
\end{equation}
For example, the 3-composition $\bbeta=(1,0,2\parallel 1\parallel 3,1)$ of $8$ divides $[8]$ into
\begin{equation}
	(\underbrace{1}_{\beta^{(0)}_1}\mid \underbrace{2,3}_{\beta^{(0)}_3}\parallel\underbrace{4}_{\beta^{(1)}_1}\parallel\underbrace{5,6,7}_{\beta^{(2)}_1}\mid \underbrace{8}_{\beta^{(2)}_2})
\end{equation}
where single bars separate batches of the same color and double bars separate colors.
In the example, the $\beta^{(0)}_2$-batch is $\varnothing$.

\begin{definition}
	For a signed $r$-composition $(\bbeta,\ggamma)$ of $n$, let $J(\bbeta,\ggamma) \subseteq [n]$ be the set
	\begin{equation}
		\bigsqcup_{\substack{0\le i\le r-1\\1\le j\le \ell(\beta^{(i)})}}\left\{m\in \Z:B^{(i)}_j-\gamma^{(i)}_j+1\le m\le B^{(i)}_j\right\}
	\end{equation}
    consisting of the largest $\gamma^{(i)}_j$ elements of each $\beta^{(i)}_j$-batch.
\end{definition}
For example, if $n = 8$, $r = 3$, $\bbeta = (1,0,2 \parallel 1 \parallel 3,1)$ and $\ggamma = (0, 0, 1 \parallel 1 \parallel 2, 0)$, then underlining the largest $\gamma_j^{(i)}$ numbers in each $\beta_j^{(i)}$-batch gives
\begin{equation}
	(1 \mid 2, \underline{3} \parallel \underline{4} \parallel 5,\underline{6},\underline{7} \mid 8),
\end{equation}
so $J(\bbeta,\ggamma) = \{ 3,4,6,7\}$.

\begin{definition}\label{def: shifted-antisym-basis}
	For a signed $r$-composition $(\bbeta, \ggamma)$ of $n$ and $0\le d\le r-1$, let $\AAA^{(d)}_{n,r}(\bbeta,\ggamma)$ be the set of all monomials $x_1^{a_1}\cdots x_n^{a_n}$ such that $a_k \leq \stair(J(\bbeta,\ggamma),r)_k$ for each $k= 1,\ldots,n$ and such that for each $0\le i\le r-1$ and for each $1\le j\le \ell(\beta^{(i)})$,
		      \begin{enumerate}
			      \item $a_{B^{(i)}_j-\beta^{(i)}_j+1},\ldots,a_{B^{(i)}_j-\gamma^{(i)}_j}$ is a \emph{strictly} increasing sequence of non-negative integers congruent to $d+i$ modulo $r$;
			      \item $a_{B^{(i)}_j-\gamma^{(i)}_j+1},\ldots,a_{B^{(i)}_j}$ is a \emph{weakly} increasing sequence of non-negative integers congruent to $d+i-1$ modulo $r$.
		      \end{enumerate}
	$B^{(i)}_j$ was defined in equation~\eqref{eq:bigB} as the last position corresponding to the part $\beta_j^{(i)}$.
\end{definition}

In our running example, $\stair(J(\bbeta,\ggamma),3)=\stair(\{3,4,6,7\},3)=(2,5,7,7,8,10,10,11) $, and for $d=1$, the set $\mathcal A^{(1)}_{8,3}(\bbeta,\ggamma)$ consists of the monomials $x_1^{a_1}\cdots x_8^{a_8}$ with
\begin{equation}
	(a_1,\ldots, a_8)\le (2,5,7,7,8,10,10,11)
\end{equation}
componentwise, and which satisfy:
$a_1\equiv a_2\equiv a_4\equiv 1$, $a_3\equiv a_5\equiv a_8\equiv 0$, and $a_6\equiv a_7\equiv 2 \pmod 3$ with $a_6\leq a_7$.

The next lemma shows that applying $\varepsilon^{(d)}_\bbeta$ to these monomials produces the desired basis; at $d=0$, this recovers \cite[Corollary~4.22]{BR}.
\begin{lemma}\label{lem:shifted-basis}
	For any $r$-composition $\bbeta$ of $n$, and any $0 \leq d \leq r-1$, the set of elements
	\begin{equation}\label{eq: shifted-basis}
		\bigsqcup_{\ggamma \leq \bbeta} \varepsilon^{(d)}_{\bbeta} \cdot (\AAA^{(d)}_{n,r}(\bbeta,\ggamma) \cdot \theta_{J(\bbeta,\ggamma)})
	\end{equation}
	descends to a $\C$-basis of $\varepsilon^{(d)}_{\bbeta} \cdot R^{(1,1)}_{\Z_r\wr\mathfrak{S}_n}$.
\end{lemma}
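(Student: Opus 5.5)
The plan is to reduce to the case $d=0$, which is \cite[Corollary~4.22]{BR}, extended from $r$-partitions to arbitrary $r$-compositions. Two things are needed: a reordering argument removing the restriction to $r$-partitions, and a twisting argument passing from $d=0$ to general $d$.

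\emph{Step 1: arbitrary $r$-compositions.} In \cite{BR} the proof for $d=0$ has three parts.
\begin{enumerate}
\item The elements $\varepsilon^{(0)}_\bbeta\cdot(m\,\theta_{J(\bbeta,\ggamma)})$ span $\varepsilon^{(0)}_\bbeta\cdot R^{(1,1)}$. This uses the monomial basis of Theorem~\ref{thm: monomial-basis}: one applies $\varepsilon^{(0)}_\bbeta$ to basis monomials, straightens inside each block, and uses antisymmetry to force strict versus weak increase and the congruence classes.
\item A count of these elements shows it equals $\langle \Frob(R^{(1,1)}), e_\bbeta\rangle$.
\item The dimension is computed from Theorem~\ref{thm: ungraded-iso} and Lemma~\ref{lem:d-dimension-interpretation}.
\end{enumerate}
None of these steps uses that the parts of each $\beta^{(i)}$ are weakly decreasing. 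The symmetrizer acts blockwise, and each block is treated independently; zero parts contribute trivial blocks. So I would rerun the argument verbatim for compositions. Alternatively, a block-permuting element of $\mathfrak{S}_n$ conjugates $\Z_r\wr\symm_\bbeta$ to $\Z_r\wr\symm_{\bbeta'}$ for a rearranged $\bbeta'$, and so identifies the two images up to an automorphism. I would rather redo the argument directly, since that keeps the explicit batch structure of $J(\bbeta,\ggamma)$.

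\emph{Step 2: general $d$.} The character $\sign^{(d)}_\bbeta$ on the color-$i$ blocks is $\sign\cdot\chi^{d+i}$. This is exactly the character $\sign^{(0)}$ attached to color $i+d$. Hence $\varepsilon^{(d)}_\bbeta=\varepsilon^{(0)}_{\widetilde\bbeta}$ as elements of the group algebra, where $\widetilde\bbeta$ is the $r$-composition that places the blocks of $\beta^{(i)}$ in color slot $i+d \pmod r$. The catch is that the block positions in $[n]$ no longer follow the color-sorted order. I would handle this by the conjugation from Step~1: a permutation moving the batches into color-sorted order carries one symmetrizer to the other. Rather than transporting, though, I would reprove the $d=0$ statement with colors labelled by the residue $d+i$ directly. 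The spanning and straightening argument only sees, for each block, which residue class the antisymmetrizer forces on exponents. Under $g\cdot x_k=\zeta^{-\eps(k)}x_{w(k)}$ and $g\cdot\theta_k=\zeta^{-\eps(k)}\theta_{w(k)}$, a monomial $x^a\theta_J$ on a block survives $\sum\chi^{c}$-twisting exactly when the exponent plus the $\theta$-degree at each position is $\equiv c \pmod r$. This gives conditions (1) and (2) of Definition~\ref{def: shifted-antisym-basis} with $c=d+i$. The staircase bounds $\stair(J,r)$ depend only on $J$ and positions, so they are unchanged.

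\emph{Step 3: linear independence by dimension count.} I would compare the cardinality of the set \eqref{eq: shifted-basis} with $\dim \varepsilon^{(d)}_\bbeta\cdot R^{(1,1)}=\langle\Frob R^{(1,1)},e_{\bbeta^{\mathrm{shift}_d}}\rangle$ from Lemma~\ref{lem:d-dimension-interpretation}. Theorem~\ref{thm: ungraded-iso} computes the right side as the number of $\Z_r\wr\symm_\bbeta$-orbits on $\SOP^r_n$ whose stabilizer character matches; as a check, for $d=0$ this is the count in \cite{BR}. The main obstacle is this counting bijection: matching monomials with residue constraints $d+i$ to shifted super ordered set partitions. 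I would attack it by exhibiting, for fixed $\bbeta$, a bijection between $\bigsqcup_\ggamma\AAA^{(d)}(\bbeta,\ggamma)$ and $\bigsqcup_\ggamma\AAA^{(0)}(\bbeta^{\mathrm{shift}_d},\ggamma')$, given by relabelling colors and reordering batches, and then invoking the $d=0$ count. If that fails, I would check instead that the \cite{BR} counting argument is uniform in the residue labels.
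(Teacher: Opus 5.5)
Your spanning step is fine and is essentially the paper's Step~1. Rerunning the straightening with residue $d+i$ is correct, since a monomial $x^a\theta_J$ survives the color twist at position $k$ exactly when $a_k+s_k\equiv d+i\pmod r$. The gap is in your Step~3, which is where the substance of the lemma lies. You never establish $\sum_{\ggamma}\#\AAA^{(d)}_{n,r}(\bbeta,\ggamma)=\dim\varepsilon^{(d)}_\bbeta\cdot R^{(1,1)}_{\Z_r\wr\mathfrak{S}_n}$, and your primary plan for it cannot work as described.

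A bijection $\bigsqcup_{\ggamma}\AAA^{(d)}_{n,r}(\bbeta,\ggamma)\to\bigsqcup_{\ggamma'}\AAA^{(0)}_{n,r}(\bbeta^{\mathrm{shift}_d},\ggamma')$ ``by relabelling colors and reordering batches'' is not even well defined. The bounds $\stair(J(\bbeta,\ggamma),r)$ depend on where a batch sits in $[n]$. More seriously, the two sides are not equinumerous stratum by stratum. The weak part of a color-$i$ batch admits $B^{(i)}_j-G^{(i)}_j+1-\delta_{d+i\equiv_r 0}$ residues. When $d=0$ the deficient batches are the color-$0$ ones, at the start of $[n]$; in general they are the color-$(-d)$ ones, in the middle.

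For instance, take $r=2$, $d=1$, $\bbeta=((1)\parallel(1))$. Then $\AAA^{(1)}_{2,2}(\bbeta,((0)\parallel(1)))=\{x_1x_2\}$. Moving the color-$1$ batch, with its $\theta$, to the front gives the stratum $\AAA^{(0)}_{2,2}(\bbeta^{\mathrm{shift}_1},((1)\parallel(0)))$, which is empty: the exponent of $x_1$ would have to be odd and at most $\stair(\{1\},2)_1=0$. Both sides have $4$ elements with bigraded count $q+q^3+z(1+q^2)$, but only after summing over $\ggamma$. So any bijection must move $\theta$'s between batches and cannot be a relabelling.

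What is needed, and what the paper supplies, is the honest count that your fallback only gestures at. It has three parts.
\begin{enumerate}
\item The product formula $\#\AAA^{(d)}_{n,r}(\bbeta,\ggamma)=\prod_{i,j}\binom{B^{(i)}_j-G^{(i)}_j}{\beta^{(i)}_j-\gamma^{(i)}_j}\binom{B^{(i)}_j-G^{(i)}_j+\gamma^{(i)}_j-\delta_{d+i\equiv_r 0}}{\gamma^{(i)}_j}$.
\item Explicit orbit representatives with matching stabilizer character in $\C[\SOP^r_n]\otimes\det$: the $(d,\bbeta)$-surviving elements. In these, each batch is weakly increasing in block order, nonzero blocks carry color $0$, and a color-$i$ batch gets color $-(d+i)\bmod r$ in the zero block, so the zero block is unavailable exactly when $d+i\equiv 0$. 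One must also check that each orbit meets this set at most once, that other zero-block colors are killed by $\varepsilon^{(d)}_\bbeta$, and that $\sign^{(d)}_\bbeta(g)c_g\det(g)=1$ on stabilizers.
\item The batch-by-batch insertion count of these representatives, which reproduces the binomial factors of the first part one batch at a time.
\end{enumerate}
Your orbit-counting description of the dimension is correct. But without the second and third parts the dimension is never computed, so linear independence is not proved.

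Routing through $d=0$ does not avoid this work. Lemma~\ref{lem:d-dimension-interpretation} does give $\dim\varepsilon^{(d)}_\bbeta\cdot R=\dim\varepsilon^{(0)}_{\bbeta^{\mathrm{shift}_d}}\cdot R$. You would then still need the reordering invariance of $\sum_\ggamma\#\AAA$, and its natural proof is exactly that the surviving-element count is visibly independent of batch order. Your Step~1 extension of \cite{BR} from $r$-partitions to $r$-compositions needs the same insertion count anyway.
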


\begin{proof}
	The concept is: applying $\varepsilon^{(d)}_{\bbeta}$ to the monomial basis of Theorem~\ref{thm: monomial-basis} gives a spanning set, which reduces to the set in equation~\eqref{eq: shifted-basis} after discarding zeros and repeated scalar multiples (Step 1); counting that set (Step 2) and computing $\dim\varepsilon^{(d)}_{\bbeta}\cdot R^{(1,1)}_{\Z_r\wr\mathfrak{S}_n}$ from Theorem~\ref{thm: ungraded-iso} (Step 3) then shows that it is a basis.

	\paragraph{\emph{Step 1}}
	Let $m= x_1^{a_1}\cdots x_n^{a_n}\cdot \theta_J$ be a basis monomial of Theorem~\ref{thm: monomial-basis}, and let $\gamma^{(i)}_j$ be the number of elements of $J$ in the $\beta^{(i)}_j$-batch of $[n]$.
	Some permutation matrix $w\in \Z_r\wr\symm_\bbeta$ moves the elements of $J$ to the end of their batches and sorts the exponents within each of the two parts of each batch, so that $w\cdot m$ is, up to sign, a monomial $x_1^{b_1}\cdots x_n^{b_n}\cdot\theta_{J(\bbeta,\ggamma)}$ in which
	\begin{equation}
		b_{B^{(i)}_j-\beta^{(i)}_j+1},\ldots,b_{B^{(i)}_j-\gamma^{(i)}_j} \qquad \text{and} \qquad
		b_{B^{(i)}_j-\gamma^{(i)}_j+1},\ldots,b_{B^{(i)}_j}
	\end{equation}
	are weakly increasing for each batch.
	Moreover $b_k\leq\stair(J(\bbeta,\ggamma),r)_k$ for all $k$: moving the elements of $J$ to the end of their batch does not change the rank of any position outside $J$ and does not decrease the bound of any position in $J$, and sorting preserves the bounds, which are weakly increasing along each of the two parts of a batch.
	Since $\varepsilon^{(d)}_\bbeta\cdot m$ and $\varepsilon^{(d)}_\bbeta\cdot (w\cdot m)$ agree up to a scalar, we may restrict to monomials $m=x_1^{a_1}\cdots x_n^{a_n}\cdot\theta_{J(\bbeta,\ggamma)}$ of this form. 

     We will show that if
	\begin{enumerate}
		\item any of the exponents $a_{B^{(i)}_j-\beta^{(i)}_j+1},\ldots,a_{B^{(i)}_j-\gamma^{(i)}_j}$ are equal, or
		\item any of the exponents $a_{B^{(i)}_j-\beta^{(i)}_j+1},\ldots,a_{B^{(i)}_j-\gamma^{(i)}_j}$ is not congruent to $d+i$ modulo $r$, or
		\item any of the exponents $a_{B^{(i)}_j-\gamma^{(i)}_j+1},\ldots,a_{B^{(i)}_j}$ is not congruent to $d+i-1$ modulo $r$,
	\end{enumerate}
	then $\varepsilon^{(d)}_\bbeta\cdot m =0$.

    Suppose that $a_{k_1}=a_{k_2}$ for two indices $B^{(i)}_j-\beta^{(i)}_j+1 \le k_1<k_2\le B^{(i)}_j-\gamma^{(i)}_j$.
    The transposition $\tau=(k_1,k_2)$ lies in $\Z_r\wr\mathfrak S_\bbeta$, and $1-\tau \in \mathbb C[\Z_r\wr\mathfrak{S}_n]$ is a factor of $\varepsilon^{(d)}_\bbeta$ with $(1-\tau)\cdot m=m-m=0$, so $\varepsilon^{(d)}_\bbeta\cdot m=0$.

	Similarly, suppose that $a_{k}\not\equiv d+i\pmod r$ for some $B^{(i)}_j-\beta^{(i)}_j+1\le k\le B^{(i)}_j-\gamma^{(i)}_j$, and consider $g\in \Z_r^n$ with $\zeta$ as the $k$th entry and $1$ elsewhere. 
    Then
	\begin{equation}
		\varepsilon_g :=\sum_{j=0}^{r-1}\sign^{(d)}_{\bbeta}\left(g^j\right)\cdot g^j=\sum_{j=0}^{r-1}(\zeta^j)^{(d+i)}\cdot g^j\in \C[\Z_r\wr\mathfrak{S}_n]
	\end{equation}
	is a factor of $\varepsilon^{(d)}_\bbeta$.
	However, we have
	\begin{equation}
		\varepsilon_g\cdot m=\sum_{j=0}^{r-1}(\zeta^j)^{d+i}\cdot \zeta^{-ja_k}\cdot m = 0,
	\end{equation}
	since $d+i-a_k\not\equiv 0\pmod r $.

    The same argument with $g^j\cdot m=\zeta^{-j(a_k+1)}m$ shows that $\varepsilon^{(d)}_\bbeta\cdot m=0$ unless $a_k\equiv d+i-1\pmod r$ for $B^{(i)}_j-\gamma^{(i)}_j+1\le k\le B^{(i)}_j$.

	Thus the claimed set of elements spans $\varepsilon^{(d)}_{\bbeta}\cdot
		R^{(1,1)}_{\Z_r\wr\mathfrak{S}_n}$, implying that
	\begin{equation}\label{eq:shifted-bound}
		\sum_{\ggamma\le\bbeta}\#\AAA^{(d)}_{n,r}(\bbeta,\ggamma)\ge \dim\varepsilon_{\bbeta}^{(d)}\cdot R^{(1,1)}_{\Z_r\wr\mathfrak{S}_n}.
	\end{equation}

	\paragraph{\emph{Step 2}}
	It remains to show that equality holds in equation~\eqref{eq:shifted-bound}. We enumerate $\AAA_{n,r}^{(d)}(\bbeta,\ggamma)$.
	Fix $0\le i\le r-1$ and $1\le j\le \ell(\beta^{(i)})$.
	From the definition, we compute
	\begin{equation}
		\stair(J(\bbeta,\ggamma),r)_{B^{(i)}_j-\gamma^{(i)}_j}=r\cdot\left(B^{(i)}_j-G^{(i)}_j\right)-1.
	\end{equation}
    Since exactly $B^{(i)}_j-G^{(i)}_j$ integers in $[0,r(B^{(i)}_j-G^{(i)}_j)-1]$ are congruent to $d+i$ modulo $r$, the strictly increasing sequence $a_{B^{(i)}_j-\beta^{(i)}_j+1},\ldots,a_{B^{(i)}_j-\gamma^{(i)}_j}$ can be chosen in $\binom{B^{(i)}_j-G^{(i)}_j}{\beta^{(i)}_j-\gamma^{(i)}_j}$ ways.

    The positions $B^{(i)}_j-\gamma^{(i)}_j+1,\ldots,B^{(i)}_j$ lie in $J(\bbeta,\ggamma)$, and the next position outside $J(\bbeta,\ggamma)$ is the $(B^{(i)}_j-G^{(i)}_j+1)$st such position, so their staircase bound is $r(B^{(i)}_j-G^{(i)}_j)+r-2$.
    Exactly $B^{(i)}_j-G^{(i)}_j+1-\delta_{d+i\equiv_r 0}$ integers in $[0,r(B^{(i)}_j-G^{(i)}_j)+r-2]$ are congruent to $d+i-1$ modulo $r$, where $\delta_{d+i\equiv_r 0}$ is $1$ if $d+i\equiv 0\pmod r$ and $0$ otherwise.
    Hence the weakly increasing sequence $a_{B^{(i)}_j-\gamma^{(i)}_j+1},\ldots,a_{B^{(i)}_j}$ can be chosen in $\binom{B^{(i)}_j-G^{(i)}_j+\gamma^{(i)}_j-\delta_{d+i\equiv_r 0}}{\gamma^{(i)}_j}$ ways.

    Therefore
	\begin{equation}\label{eq:shifted-count}
		\#\AAA^{(d)}_{n,r}(\bbeta,\ggamma)=\prod_{i=0}^{r-1}\left(\prod_{j=1}^{\ell(\beta^{(i)})}\binom{B^{(i)}_j-G^{(i)}_j}{\beta^{(i)}_j-\gamma^{(i)}_{j}}\right.
		\left.\times\binom{B^{(i)}_j-G^{(i)}_j+\gamma^{(i)}_j-\delta_{d+i\equiv_r 0}}{\gamma^{(i)}_j}\right),
	\end{equation}
    where $\binom{-1}{0} = 1$.
	Summing this over all $\ggamma\le \bbeta$, we obtain an expression for the left-hand side of equation~\eqref{eq:shifted-bound}.

	\paragraph{\emph{Step 3}}
	We compute the right-hand side of equation~\eqref{eq:shifted-bound} using Theorem~\ref{thm: ungraded-iso}.
	We say that $\sigma\in \SOP^r_n$ is \emph{$(d,\bbeta)$-surviving} if:
	\begin{itemize}
		\item for each batch $\beta^{(i)}_j$ and for each $a<b$ in that batch, the block of $\sigma$ containing $a$ is the block containing $b$ or lies to its left (the zero block is the leftmost block);
		\item every element in a non-zero block has color $0$;
		\item every element in the zero block which is in the $\beta^{(i)}_j$-batch of $[n]$ has color $(-d-i)\bmod{r}$.
	\end{itemize}
    The final condition implies that if $d+i\equiv 0\pmod r$, then no element of the $\beta^{(i)}_j$-batch lies in the zero block.
	Let $v$ span the representation $\det $. 
    We claim that the set
	\begin{equation}\label{eq: OSP-basis}
		\{\varepsilon^{(d)}_{\bm\beta}\cdot (\sigma\otimes v):\sigma\text{ is a $(d,\bm\beta)$-surviving element of $\SOP^r_n$}\}
	\end{equation}
	is a basis for $\varepsilon^{(d)}_{\bm\beta}\cdot \left(\C[\SOP^r_n]\otimes \det \right)$.

	The elements $\varepsilon^{(d)}_{\bm\beta}\cdot (\sigma\otimes v)$ for $\sigma\in \SOP^r_n$ span $\varepsilon^{(d)}_{\bm\beta}\cdot\left(\C[\SOP^r_n]\otimes \det \right)$. 
    Given $\sigma$, some $w\in \Z_r\wr\symm_\bbeta$ arranges the elements of each batch in weakly increasing block order from left to right and gives every element in a non-zero block the color $0$.
    Since $\varepsilon^{(d)}_\bbeta\cdot(\sigma\otimes v)$ is a scalar multiple of $\varepsilon^{(d)}_\bbeta\cdot w\cdot(\sigma\otimes v)$, we may discard the $\sigma$ violating these conditions; the resulting subset still spans.

	Next, suppose that $k^c$ lies in the zero block of $\sigma$, with $k$ in a $\beta^{(i)}_{*}$-batch. 
    Let $g\in\Z_r^n$ and $\varepsilon_g$ be as in Step 1.
    Then
	\begin{equation}
		\varepsilon_g\cdot(\sigma\otimes v)=\sum_{j=0}^{r-1}\zeta^{j(d+i)}\cdot (\zeta^{(c-1)j}\cdot \sigma)\otimes(\zeta^j\cdot v)=\sum_{j=0}^{r-1}(\zeta^{d+i+c})^j(\sigma\otimes v)
	\end{equation}
	vanishes unless $c\equiv -(d+i)\pmod r$, so we may also discard the $\sigma$ violating the third condition.
    Thus the set in equation~\eqref{eq: OSP-basis} spans.

	Now let $\sigma$ be $(d,\bbeta)$-surviving. 
    If $g \in \Z_r\wr\symm_\bbeta$ maps $\sigma$ to a scalar multiple $c_g \sigma$, then $g$ only permutes elements of a common batch lying in a common block of $\sigma$ and changes colors only in the zero block. 
    Checking these two kinds of generators shows that $\sign^{(d)}_\bbeta(g) c_g \det(g)=1$.
	Thus the coefficient of $\sigma\otimes v$ in $\varepsilon^{(d)}_\bbeta\cdot(\sigma\otimes v)$ is the number of such $g$, and in particular, $\varepsilon^{(d)}_\bbeta\cdot(\sigma\otimes v)\neq 0$.
	Since $\varepsilon^{(d)}_\bbeta\cdot(\sigma\otimes v)$ is supported on the $\Z_r\wr\symm_\bbeta$-orbit of $\sigma$, and each orbit contains at most one $(d,\bbeta)$-surviving element, the elements of equation~\eqref{eq: OSP-basis} have pairwise disjoint supports, so they are linearly independent, proving the claim.

    It only remains to enumerate the $(d,\bbeta)$-surviving elements of $\SOP^r_n$, which we do by adapting \cite[Lemma~4.7]{BR}: every such element is built from the empty set partition by inserting the $\beta^{(i)}_j$-batches of numbers in order.

    For example, with $\bbeta = (1,0,2 \parallel 1 \parallel 3,1)$, and $d=2$, one $(d,\bbeta)$-surviving $\sigma$ is built as follows, where each line shows the batch about to be inserted and the partition built so far:
	\begin{align*}
		\{1\}\to       & (\varnothing)                                 \\
		\varnothing\to & (\varnothing\mid 1^0)                         \\
		\{2,3\}\to     & (\varnothing\mid 1^0)                         \\
		\{4\}\to       & (2^1\mid 1^0,3^0)                             \\
		\{5,6,7\}\to   & (2^1\mid 1^0,3^0,4^0)                         \\
		\{8\}\to       & (2^1, 5^2\mid 6^0\mid 1^0,3^0,4^0,7^0)        \\
		\sigma=        & (2^1,5^2\mid 6^0\mid 8^0\mid 1^0,3^0,4^0,7^0).
	\end{align*}
	The colors are forced by the definition of a $(d,\bbeta)$-surviving element, so it suffices to count the ways to insert each batch.

	Consider inserting the $\beta^{(i)}_j$-batch, which creates at
	most $\beta^{(i)}_j$ new blocks. 
    Define a sequence of numbers $\gamma^{(i)}_j$ so that exactly $\beta^{(i)}_j-\gamma^{(i)}_j$ new non-zero blocks are created; then $(\bbeta,\ggamma)$ is a signed composition.
	Before this insertion, the partition has $p:=B^{(i)}_j-G^{(i)}_j-(\beta^{(i)}_j-\gamma^{(i)}_j)$ non-zero blocks. 
    Insert $\beta^{(i)}_j-\gamma^{(i)}_j$ new non-zero blocks, each consisting of a single placeholder $\bullet$, in one of
	\begin{equation}
		\binom{p+\beta^{(i)}_j-\gamma^{(i)}_j}{\beta^{(i)}_j-\gamma^{(i)}_j}=\binom{B^{(i)}_j-G^{(i)}_j}{\beta^{(i)}_j-\gamma^{(i)}_j}
	\end{equation}
	ways. 
    Then insert $\gamma^{(i)}_j$ placeholders $\circ$ (possibly several into one block) into the $p+\beta^{(i)}_j-\gamma^{(i)}_j$ non-zero blocks (both old and new), and into the zero block if $d+i\not\equiv 0\pmod r$. This is a choice of a multiset of size $\gamma^{(i)}_j$ from $p+\beta^{(i)}_j-\gamma^{(i)}_j+1-\delta_{d+i\equiv_r 0}$ blocks, which can be made in
	\begin{equation}
		\binom{p+\beta^{(i)}_j-\delta_{d+i\equiv_r 0}}{\gamma^{(i)}_j}=\binom{B^{(i)}_j-G^{(i)}_j+\gamma^{(i)}_j-\delta_{d+i\equiv_r 0}}{\gamma^{(i)}_j}
	\end{equation}
	ways. 
    Finally, replace all $\bullet$ and $\circ$ symbols from left to right by the elements of the $\beta^{(i)}_j$-batch in increasing order, with color $0$ in the non-zero blocks and color $(-d-i) \bmod r$ in the zero block.

    For example, inserting the $\beta^{(2)}_1$-batch $\{5,6,7\}$ into $(2^1\mid 1^0,3^0,4^0)$ with $\gamma^{(2)}_1=2$ includes the following two (among many) possibilities:
	\begin{equation}
		\begin{array}{l}
			(2^1\mid 1^0,3^0,4^0)\to (2^1\mid\bullet\mid 1^0,3^0,4^0)\to (2^1, \circ\mid\bullet\mid 1^0,3^0,4^0,\circ)\to (2^1, 5^2\mid  6^0\mid 1^0,3^0,4^0,7^0) \\
			(2^1\mid 1^0,3^0,4^0)\to (2^1\mid 1^0,3^0,4^0\mid\bullet)\to(2^1\mid 1^0,3^0,4^0,\circ,\circ\mid \bullet)\to (2^1\mid 1^0,3^0,4^0,5^0,6^0\mid 7^0)
		\end{array}.
	\end{equation}
	Different $\gamma^{(i)}_j$ yield different $\sigma$; the number of possible $\sigma$ that can be constructed is
	\begin{equation}
		\sum_{\ggamma\le \bbeta}\left(\prod_{i=0}^{r-1}\prod_{j=1}^{\ell(\beta^{(i)})}\binom{B^{(i)}_j-G^{(i)}_j}{\beta^{(i)}_j-\gamma^{(i)}_{j}}
		\times\binom{B^{(i)}_j-G^{(i)}_j+\gamma^{(i)}_j-\delta_{d+i\equiv_r 0}}{\gamma^{(i)}_j}\right)
	\end{equation}
	which, by equation~\eqref{eq:shifted-count}, agrees with the left-hand side of equation~\eqref{eq:shifted-bound}.
	Thus equality holds in equation~\eqref{eq:shifted-bound}, and the lemma is proven.
\end{proof}

We now derive the recursion for the bigraded Frobenius series, individually for each $z$-graded piece. 
Define $C_{n,k}(\x,q)$ by
\begin{equation}
	\grFrob(R^{(1,1)}_{\Z_r\wr\mathfrak{S}_n};q;z)=\sum_{k=0}^nz^{n-k} C_{n,k}(\x,q),
\end{equation}
with $C_{0,0}=1$ and $C_{m,s}=0$ unless $0\le s\le m$.
For $k\ge0$ and $0\le i\le r-1$, define the finite alphabets
\begin{equation}
	U_{k,i}:=(q^{i+ra})_{0\le a<k},\qquad V_{k,i}:=
	\begin{cases}
		(q^{r-1+ra})_{0\le a<k},    & i=0,           \\
		(q^{i-1+ra})_{0\le a\le k}, & 1\le i\le r-1.
	\end{cases}
\end{equation}
For $0\le\ell\le j$, put
\begin{equation}\label{eq:common-branching-coefficient}
	\kappa^{(i)}_{k;j,\ell}
	:=e_\ell(U_{k,i})h_{j-\ell}(V_{k,i}),
\end{equation}
where $e_0=h_0=1$, and positive-degree elementary and complete homogeneous symmetric functions vanish on the empty alphabet. 
Explicitly, for $j\ge1$ and $0\le\ell\le\min(j,k)$, 
\begin{equation}\label{eq: finite-alphabet-eval}
	\begin{aligned}
		\kappa^{(0)}_{k;j,\ell}
		 & =q^{r\binom{\ell}{2}+(j-\ell)(r-1)}
		\qbinom{k}{\ell}_{q^r}
		\qbinom{k+j-\ell-1}{j-\ell}_{q^r},     \\
		\kappa^{(i)}_{k;j,\ell}
		 & =q^{r\binom{\ell}{2}+j(i-1)+\ell}
		\qbinom{k}{\ell}_{q^r}
		\qbinom{k+j-\ell}{j-\ell}_{q^r}
		\qquad(1\le i\le r-1).
	\end{aligned}\end{equation}

\begin{proposition}\label{lem: Cnk-recursion}
	For $0\le k\le n$, $0\le i\le r-1$, and $1\le j\le n$,
	\begin{equation}
		e_j\bigl(\x^{(i)}\bigr)^\perp C_{n,k}
		=\sum_{\ell=0}^{\min(j,k)}
		\kappa^{(i)}_{k;j,\ell}\,C_{n-j,k-\ell}.
	\end{equation}
\end{proposition}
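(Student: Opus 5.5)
The plan is to test both sides of the identity against a spanning family of symmetric functions using Lemma~\ref{lem:d-dimension-interpretation}. For any $r$-composition $\bbeta'$ of $n-j$ and any $d$, we have
\[
\langle e_j(\x^{(i)})^\perp C_{n,k},\, e_{\bbeta'^{\mathrm{shift}_d}}\rangle=\langle C_{n,k},\, e_j(\x^{(i)})\,e_{\bbeta'^{\mathrm{shift}_d}}\rangle .
\]
The product $e_j(\x^{(i)})\,e_{\bbeta'^{\mathrm{shift}_d}}$ equals $e_{\bbeta^{\mathrm{shift}_d}}$, where $\bbeta$ is obtained from $\bbeta'$ by appending one part. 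The key choice is to take $d\equiv i+1 \pmod r$ and to append the part $j$ at the \emph{end} of $\beta'^{(r-1)}$. Then the shifted color $r-1+d\equiv i$, and the new batch is the last batch $\{n-j+1,\dots,n\}$ of $[n]$. As $\bbeta'$ ranges over all $r$-compositions, so does $\bbeta'^{\mathrm{shift}_d}$. Hence the $e_{\bbeta'^{\mathrm{shift}_d}}$ span $\Lambda^r_{n-j}$, and it suffices to prove the recursion after pairing with each of them.

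By Lemma~\ref{lem:d-dimension-interpretation} applied bigradedly (the projector $\varepsilon^{(d)}_\bbeta$ preserves bidegree), $\langle \grFrob(R^{(1,1)}_{\Z_r\wr\mathfrak S_n}), e_{\bbeta^{\mathrm{shift}_d}}\rangle$ is the bigraded Hilbert series of $\varepsilon^{(d)}_\bbeta R^{(1,1)}$. By Lemma~\ref{lem:shifted-basis}, this series is
\[
\sum_{\ggamma\le\bbeta} z^{|\ggamma|}\sum_{m\in\AAA^{(d)}_{n,r}(\bbeta,\ggamma)} q^{\deg m}.
\]
The $z$-degree is $|J(\bbeta,\ggamma)|=|\ggamma|$, so the coefficient of $z^{n-k}$, which is $\langle C_{n,k},e_{\bbeta^{\mathrm{shift}_d}}\rangle$, collects the terms with $|\ggamma|=n-k$. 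The conditions defining $\AAA^{(d)}_{n,r}(\bbeta,\ggamma)$ are imposed batch by batch. The constraints on a given batch depend only on $B^{(i')}_{j'}$, $G^{(i')}_{j'}$, the batch's own parts, and the residue $d+i'$. Therefore the $q$-generating function factors as a product over batches.

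For the last batch we have $B=n$ and $G=|\ggamma|=n-k$, so $B-G=k$. Write $\ell:=\beta-\gamma$ for its number of non-$J$ entries, so that it has $j-\ell$ entries in $J$. The residue of this batch is $d+r-1\equiv i$.
\begin{itemize}
\item The non-$J$ part is a strictly increasing sequence of $\ell$ exponents, each $\equiv i \pmod r$ and at most $rk-1$. Its generating function is $e_\ell(q^{i},q^{i+r},\dots,q^{i+r(k-1)})=e_\ell(U_{k,i})$.
\item The $J$ part is a weakly increasing sequence of $j-\ell$ exponents, each $\equiv i-1 \pmod r$ and at most $rk+r-2$. For $1\le i\le r-1$ the allowed values are $i-1+ra$ with $0\le a\le k$. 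For $i=0$ they are $r-1+ra$ with $0\le a<k$, since $rk+r-1$ exceeds the bound; this is exactly the case $d+i\equiv 0$ with no zero block. Either way the generating function is $h_{j-\ell}(V_{k,i})$.
\end{itemize}
Thus the last batch contributes $\kappa^{(i)}_{k;j,\ell}$. The remaining batches are exactly the batches of $\bbeta'$ on $[n-j]$, with the same partial sums. Their data $\ggamma'$ satisfies $|\ggamma'|=n-k-(j-\ell)$, so that $(n-j)-|\ggamma'|=k-\ell$. Their staircase bounds also coincide with those for $J(\bbeta',\ggamma')\subseteq[n-j]$, because the staircase entry at a position only depends on the non-$J$ positions up to the next non-$J$ position. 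The one exception is the final $J$-run of $\bbeta'$, which I must check is still bounded by the next non-$J$ rank and does not run into the last batch. Applying Lemma~\ref{lem:d-dimension-interpretation} and Lemma~\ref{lem:shifted-basis} again at size $n-j$ with the same $d$, the product of these factors summed over $\ggamma'$ is $\langle C_{n-j,k-\ell},e_{\bbeta'^{\mathrm{shift}_d}}\rangle$. Summing over $\ell$ then gives the claimed recursion paired against $e_{\bbeta'^{\mathrm{shift}_d}}$. The range $0\le\ell\le\min(j,k)$ arises automatically, since $e_\ell(U_{k,i})=0$ for $\ell>k$.

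The main obstacle is this factorization bookkeeping, in particular the boundary between $\bbeta'$ and the appended batch. I will verify it directly from the definition of $\stair(J,r)$: positions in the trailing $J$-run of a batch get bound $ri-2$, where $u_i$ is the next non-$J$ position, and this $u_i$ could lie in the next batch. For the last batch of $\bbeta'$, I will check that the count in Step 2 of Lemma~\ref{lem:shifted-basis} was computed purely from $B-G$ of that batch. That is the property needed, so the recursion should follow from that same computation. I will also treat the edge cases $k=0$ and empty parts of $\bbeta'$ separately, to confirm the conventions $e_0=h_0=1$ and $\binom{-1}{0}=1$.
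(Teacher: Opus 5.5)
Your proposal is correct and follows essentially the same route as the paper's proof. Both arguments choose $d\equiv i+1 \pmod r$, append the part $j$ at the end of $\beta^{(r-1)}$, and pair against $e_{\bbeta'^{\mathrm{shift}_d}}$ via Lemmas~\ref{lem:d-dimension-interpretation} and~\ref{lem:shifted-basis}, splitting off the last batch as $e_\ell(U_{k,i})h_{j-\ell}(V_{k,i})$. The boundary check you flag does go through: the trailing $J$-run of $[n-j]$ has staircase bound $r(k-\ell+1)-2$ in both $\stair(J(\bbeta,\ggamma),r)$ and $\stair(J(\bbeta',\ggamma'),r)$, an agreement the paper simply asserts.
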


\begin{proof}
	Fix $0\le i\le r-1$ and choose $d$ so that $d-i\equiv 1\pmod r$.
	By equation~\eqref{eq:d-shift}, the $(r-1)$st component of an $r$-composition becomes the $i$th component after the $d$-shift.
	Let $\bbeta$ be an $r$-composition of $n$ whose component $\beta^{(r-1)}$ has last part $j$, and let $\bbeta_0$ be obtained from $\bbeta$ by deleting that part; every $r$-composition $\bbeta_0$ of $n-j$ arises in this way.
	Then $e_{\bbeta^{\operatorname{shift}_d}}(\x) = e_j(\x^{(i)}) e_{\bbeta_0^{\operatorname{shift}_d}}(\x)$, so	\begin{equation}\label{eq:C-recursion-pairing}
		\left\langle e_j(\x^{(i)})^\perp C_{n,k}, e_{\bbeta_0^{\operatorname{shift}_d}} \right\rangle = \left\langle C_{n,k}, e_{\bbeta^{\operatorname{shift}_d}} \right\rangle.
	\end{equation}
	By Lemmas~\ref{lem:d-dimension-interpretation} and
	\ref{lem:shifted-basis}, the right-hand side is the graded generating function
	\begin{equation}\label{eq:C-recursion-basis-sum}
		\sum_{\substack{\ggamma\le\bbeta\\|\ggamma|=n-k}} \sum_{m\in \AAA^{(d)}_{n,r}(\bbeta,\ggamma)} q^{\deg m}.
	\end{equation}
	Let the last part of $\ggamma$ be $j-\ell$, and let $(\bbeta_0,\ggamma_0)$ be obtained by deleting the last parts of $\bbeta$ and $\ggamma$, so that $|\bbeta_0| = n-j$ and $|\ggamma_0| = n-k -(j-\ell)$.
	Write a monomial
	\begin{equation}\label{eq:mon_beta_gamma}
		m = x_1^{a_1} \cdots x_n^{a_n} \in \AAA^{(d)}_{n,r}(\bbeta,\ggamma)
	\end{equation}
	as $m = m_0 m_1$ with $m_0=x_1^{a_1}\cdots x_{n-j}^{a_{n-j}}$ and $m_1=x_{n-j+1}^{a_{n-j+1}}\cdots x_n^{a_n}$.
    Since $\stair(J(\bbeta,\ggamma),r)$ and $\stair(J(\bbeta_0,\ggamma_0),r)$ agree on $[n-j]$, we have $m_0\in\AAA^{(d)}_{n-j,r}(\bbeta_0,\ggamma_0)$.
	Conversely, every monomial in $\AAA^{(d)}_{n-j,r}(\bbeta_0,\ggamma_0)$ extends to a monomial in $\AAA^{(d)}_{n,r}(\bbeta,\ggamma)$ by any choice of exponents for the last $j$ variables satisfying the conditions of Definition~\ref{def: shifted-antisym-basis}.
	Thus, for fixed $\ell$, the corresponding part of equation~\eqref{eq:C-recursion-basis-sum} factors as
	\begin{equation}
		\left( \sum_{\substack{\ggamma_0\leq\bbeta_0\\|\ggamma_0|=(n-j)-(k-\ell)}} \sum_{m_0\in\AAA^{(d)}_{n-j,r}(\bbeta_0,\ggamma_0)} q^{\deg(m_0)} \right) \left( \sum_{m_1}q^{\deg(m_1)} \right).
	\end{equation}
	By Lemmas~\ref{lem:d-dimension-interpretation} and~\ref{lem:shifted-basis}, the first factor is $\langle C_{n-j,k-\ell}, e_{\bbeta_0^{\operatorname{shift}_d}}(\x) \rangle$.
	It remains to compute the second factor.

    By Definition~\ref{def: shifted-antisym-basis}, the exponents $a_{n-j+1},\ldots,a_{n-j+\ell}$ are strictly increasing and congruent to $i$ modulo $r$, since $d+r-1\equiv i\pmod r$.
	Since $|\ggamma_0|=n-k-(j-\ell)$, the positions $n-j+1,\ldots,n-j+\ell$ are the $(k-\ell+1)$st through $k$th positions outside $J(\bbeta,\ggamma)$, so their staircase bounds are $r(k-\ell+1)-1,\ldots,rk-1$, and the allowed values are $i,i+r,i+2r,\ldots,i+(k-1)r$.
	Their degree generating function is
	\begin{equation}\label{eq:strict-e-generating-function}
		e_\ell \left( q^i,q^{i+r},\ldots,q^{i+(k-1)r} \right).
	\end{equation}
	By Definition~\ref{def: shifted-antisym-basis}, the remaining exponents $a_{n-j+\ell+1},\ldots,a_n$ in the monomial are weakly increasing and congruent to $i-1$ modulo $r$ since $d+(r-1)-1 \equiv i-1\pmod r$.
    Their positions lie in $J(\bbeta,\ggamma)$ and are followed by no position outside it, so each has staircase bound $r(k+1)-2$, as $|J(\bbeta,\ggamma)|=n-k$.

	If $1\le i\le r-1$, the allowed values are $i-1,i-1+r,\ldots,i-1+kr$, while if $i=0$, the allowed values are  $r-1,2r-1,\ldots,kr-1$. 
    In both cases these form the alphabet $V_{k,i}$, so the generating function of the weakly increasing sequence is $h_{j-\ell}(V_{k,i})$.
    Thus the second factor is $\kappa^{(i)}_{k;j,\ell}$.
	Summing over $\ell$ and using equation~\eqref{eq:C-recursion-pairing}, we obtain
	\begin{equation}
		\left\langle e_j(\x^{(i)})^\perp C_{n,k}, e_{\bbeta_0^{\operatorname{shift}_d}}(\x) \right\rangle = \left\langle \sum_{\ell=0}^{\min(j,k)} \kappa^{(i)}_{k;j,\ell} C_{n-j,k-\ell}, e_{\bbeta_0^{\operatorname{shift}_d}}(\x) \right\rangle.
	\end{equation}
	As $\bbeta_0$ varies, the functions $e_{\bbeta_0^{\operatorname{shift}_d}}$ span $\Lambda^r_{n-j}(\x)$, and the recursion follows.
\end{proof}

\subsection{Equidistribution of two statistics}\label{subsec:equidistribution} Define $D_{n,k}(\x,q) \in \Lambda^r(\x)$ by
\begin{equation}
	\sum_{\QQ\in \SYT^r_n}q^{\fmaj(\QQ)}\prod_{j=1}^{\fdes(\QQ)}\left(1+\frac{z}{q^{rj-r+1}}\right)s_{\shape(\QQ)}(\x)=\sum_{k=0}^n z^{n-k} D_{n,k}(\x,q).
\end{equation}
We will show that $D_{n,k}$ satisfies the same recursion as the function $C_{n,k}$ in Proposition~\ref{lem: Cnk-recursion} in four steps:
\begin{itemize}
	\item Lemma~\ref{lem: Dnk-to-fmaj} rewrites $D_{n,k}(\x,q)$ as the generating function of $\fmaj$ over $\SOmP^r_{n,k}$;
	\item Lemma~\ref{lem: fmaj-to-sinv} exchanges $\fmaj$ for $\sinv$ using an equidistribution theorem of Wilson \cite{WilsonOMP}; 
	\item Lemma~\ref{lem: omega-sinv-to-winv} computes $\omega D_{n,k}$ as the generating function of the statistic $\winv$ over $\SOMP^r_{n,k}$;
	\item Lemma~\ref{lem: Ink-recursion} derives the recursion for $h_j(\x^{(i)})^\perp$ applied to that generating function.
\end{itemize}
For $\rho\in\SOMP^r_{n,k}$, let $\x^\rho$ be the monomial in which the exponent of $x^{(i)}_j$ is the number of times $j^i$ appears in $\rho$.

\begin{lemma}\label{lem: Dnk-to-fmaj}
	We have
	\begin{equation}
		D_{n,k}(\x,q)=\sum_{\rho\in\SOmP^r_{n,k}}q^{\fmaj(\rho)}\x^\rho.
	\end{equation}
\end{lemma}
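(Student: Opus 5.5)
We prove the identity by converting the tableau sum into a sum over $r$-colored words, then grouping words by their flag-descent-starred structure. By Lemma~\ref{lem: schur-to-Qsym} and $r$-colored RSK (Theorem~\ref{thm:r-RSK}), the left side of the definition of $D_{n,k}$ becomes
\begin{equation*}
\sum_{\QQ\in\SYT^r_n} q^{\fmaj(\QQ)}\prod_{j=1}^{\fdes(\QQ)}\Bigl(1+\frac{z}{q^{rj-r+1}}\Bigr)s_{\shape(\QQ)}(\x) =\sum_{g} q^{\fmaj(g)}\prod_{j=1}^{\fdes(g)}\Bigl(1+\frac{z}{q^{rj-r+1}}\Bigr)\x^{g},
\end{equation*}
where $g$ runs over all $r$-colored words of length $n$. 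This holds because $g\mapsto(\PP(g),\QQ(g))$ is a bijection with $\x^g=\x^{\PP(g)}$, and $\fmaj$ and $\fDes$ are transported to $\QQ(g)$. Summing over $\PP$ of fixed shape gives $s_{\shape(\QQ)}$ by equation~\eqref{eq:schur-ssyt}, so quasisymmetric functions are not even needed here.

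Next we expand the product. Index the flag descents of $g$ from right to left, $t_1>t_2>\cdots>t_{\fdes(g)}$. Choosing the term $z/q^{rj-r+1}$ for the $j$-th factor is matched with placing a star at the flag descent $t_j$. For a star at $i\in\fDes(g)$, the index $j$ equals $|\fDes(g)\cap[i,n-1]|$. The factor therefore contributes $z\,q^{-(r|\fDes(g)\cap[i,n-1]|-(r-1))}$, which is exactly the correction term in Definition~\ref{def:fmaj-SOmP}. The expansion thus becomes a sum over flag-descent-starred words $(g,S)$ with $S\subseteq\fDes(g)$, weighted by $z^{|S|}q^{\fmaj(g)-\sum_{i\in S}(\cdots)}\x^g$.

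It remains to identify starred words with super ordered multiset partitions. By the bijection described before Definition~\ref{def:reading-word-SOmP}, pairs $(g,S)$ correspond to $\sigma\in\SOmP^r_n$, with $\fmaj(\sigma)$ as in Definition~\ref{def:fmaj-SOmP}, and $\x^\sigma=\x^g$ since the colored letters are the same.

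The step I expect to be the main obstacle is verifying this bijection carefully. Given $(g,S)$, we cut $g$ after each position not in $S$. A star at $0$ means the first segment is the zero block. Its letters all have nonzero color because the stars $0,1,\ldots$ are consecutive flag descents, and a flag descent from $0$ forces nonzero color while descents within a block keep the colors nonzero. The letters within each block are strictly $>_r$-decreasing, so each block is a set. We must also check that the zero block can be empty (no star at $0$) and that every block is nonempty.

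Finally we count blocks. Let $|S|=s$. If $0\notin S$, there are $n-s$ nonzero blocks. If $0\in S$, the zero block absorbs one star, so again there are $n-s$ nonzero blocks: with $m$ elements in the zero block, $m$ of the stars lie at positions $0,\ldots,m-1$. Hence $k=n-|S|$, and the coefficient of $z^{n-k}$ is $\sum_{\sigma\in\SOmP^r_{n,k}}q^{\fmaj(\sigma)}\x^\sigma$, which proves the lemma.
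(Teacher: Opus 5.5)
Your proposal is correct and follows essentially the same route as the paper. Both expand $s_{\shape(\QQ)}$ as a sum over semistandard $r$-tableaux (your citation of the quasisymmetric expansion is unnecessary, as you note yourself) and pass to $r$-colored words via colored RSK. Both then expand the product over subsets $S\subseteq\fDes(g)$ to obtain the correction terms of the flag major index, and identify flag-descent-starred words with $\SOmP^r_{n,k}$ via reading words, where $k=n-|S|$. Your explicit check of the inverse bijection and of the block count fills in details the paper leaves implicit.
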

\begin{proof}
	Expand $s_{\llambda}(\x)=\sum_{\UU\in \SSYT^r(\llambda)}\x^\UU$ in the definition of $D_{n,k}$:
	\begin{align*}
		\sum_{k=0}^n{z^{n-k}}
		D_{n,k}(\x,q) & =\sum_{\QQ\in \SYT^r_n}q^{\fmaj(\QQ)}\prod_{j=1}^{\fdes(\QQ)}\left(1+\frac{z}{q^{rj-r+1}}\right)\sum_{\substack{\UU \in\SSYT^r_n\\ \shape(\UU)=\shape(\QQ)}}\x^\UU \\
		              & =\sum_{\substack{\UU\in \SSYT^r_n,\QQ\in\SYT^r_n\\ \shape(\UU)=\shape(\QQ)}}q^{\fmaj(\QQ)}\prod_{j=1}^{\fdes(\QQ)}\left(1+\frac{z}{q^{rj-r+1}}\right)\x^\UU.
	\end{align*}
	Apply the colored RSK correspondence (Theorem~\ref{thm:r-RSK}), a bijection $w\mapsto (\UU,\QQ)$ from $\N_r^n$ to the pairs $(\UU,\QQ)\in \SSYT^r_n\times \SYT^r_n$ of the same shape, with $\fmaj(w)=\fmaj(\QQ)$, $\fDes(w)=\fDes(\QQ)$, and $\x^w=\x^\UU$.
    Thus
	\begin{align*}
		\sum_{k=0}^n z^{n-k}
		D_{n,k}(\x,q) & =\sum_{w\in\N_r^n}q^{\fmaj(w)}\prod_{j=1}^{\fdes(w)}\left(1+\frac{z}{q^{rj-r+1}}\right)\x^w                                                        \\
		              & =\sum_{w\in \N_r^n}q^{\fmaj(w)}\sum_{T\subseteq\fDes(w)}z^{\#T}q^{-\sum_{i\in T}\left(r\,\#\left(\fDes(w) \cap [i,n-1]\right)-(r-1)\right)}\x^w    \\
		              & =\sum_{\substack{w\in \N_r^n\\T\subseteq\fDes(w)}}z^{\#T}q^{\fmaj(w,T)}\x^w\\
                &=\sum_{k=0}^nz^{n-k}\sum_{\rho\in \SOmP^r_{n,k}}q^{\fmaj(\rho)}\x^\rho,
	\end{align*}
    where $\fmaj(w,T)$ denotes $\fmaj(\sigma)$ for the element $\sigma\in\SOmP^r_{n,n-\#T}$ whose reading word is $(w,T)$ (Definitions~\ref{def:reading-word-SOmP} and~\ref{def:fmaj-SOmP}), and the last equality is the bijection of Definition~\ref{def:reading-word-SOmP}.
\end{proof}

Next we show that $\fmaj$ and $\sinv$ are equidistributed over $\SOmP^r_{n,k}$. 
By setting $r=1$ and $k=n$ and extracting the coefficient of $x^{(0)}_1x^{(0)}_2\cdots x^{(0)}_n$, this recovers MacMahon's theorem \cite{macmahon1915combinatory}, that the major index and inversion number are equidistributed over $\mathfrak{S}_n$.
We reduce to the case $r=1$ of Wilson \cite{WilsonOMP}, by leveraging a technique of \cite[Lemma~3.6]{ChanRhoades}.
\begin{lemma}\label{lem: fmaj-to-sinv}
	We have
	\begin{equation}
\sum_{\rho\in\SOmP^r_{n,k}}q^{\fmaj(\rho)}\x^\rho=\sum_{\rho\in\SOmP^r_{n,k}}q^{\sinv(\rho)}\x^\rho.
	\end{equation}
\end{lemma}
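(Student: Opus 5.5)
Both sides are generating functions over $\SOmP^r_{n,k}$ weighted by the content monomial $\x^\rho$. Since $\x^\rho$ determines the multiset of colored letters, it suffices to prove equidistribution of $\fmaj$ and $\sinv$ on each fiber with fixed content. The plan is to reduce to Wilson's uncolored theorem \cite{WilsonOMP}, which says $\maj$ and $\inv$ are equidistributed over ordered multiset partitions with fixed content and fixed number of blocks. The reduction follows the standard device of \cite[Lemma~3.6]{ChanRhoades}.

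\textbf{Step 1: remove the zero block.} By Definition~\ref{def:fmaj-SOmP-primed}, $\fmaj(Z,\rho)=\varepsilon'(Z)+\fmaj(\rho)$. By Definition~\ref{def:sinv-winv}, $\sinv(Z,\rho)=\varepsilon'(Z)+\sinv(\rho)$. Splitting $\SOmP^r_{n,k}$ according to the zero block $Z$, each side factors as $\sum_Z q^{\varepsilon'(Z)}\x^Z$ times a sum over $\OmP^r_{n-|Z|,k}$. It therefore suffices to prove the identity on $\OmP^r_{m,k}$ with fixed content.

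\textbf{Step 2: relabel colored letters as uncolored integers.} Fix the content, a finite multiset of colored letters with no letter repeated within a block. Order the distinct letters by $<_r$ and relabel them by positive integers in an order-preserving way. The reading word's descents (comparisons under $<_r$ within and between blocks) then become ordinary descents. So $\fmaj(\rho)=\varepsilon(\rho)+r\cdot\widetilde{\maj}(\rho)-\sum_{i\in S}(r\,\#(\Des\cap[i,m-1])-(r-1))$.

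I will rewrite this as $\varepsilon(\rho)+(r-1)(m-k)+r\cdot\maj_{W}(\hat\rho)$, where $\maj_W$ is Wilson's major index on the relabeled uncolored ordered multiset partition $\hat\rho$. This uses the fact that $|S|=m-k$ and that Wilson's statistic is $\sum_{\Des}i-\sum_{i\in S}\#(\Des\cap[i,m-1])$.

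Similarly, $\sinv(\rho)=r|\sInv(\rho)|+\varepsilon(\rho)+(r-1)(m-k)$. Note that $\varepsilon(\rho)$ depends only on the content.

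\textbf{Step 3: match $\sInv$ with an uncolored inversion count.} The difficulty is that $\sInv$ uses the order $\prec$ (base first, then color), not $<_r$. Two fixes are possible:
\begin{itemize}
\item relabel using $\prec$ for the inversion side, since Wilson's theorem holds for any total order on the alphabet given fixed content;
\item use a content-preserving bijection.
\end{itemize}
Concretely, $\sum_{\hat\rho}q^{\maj}$ and $\sum_{\hat\rho}q^{\inv}$ over uncolored ordered multiset partitions with fixed content multiset $\{1^{a_1},2^{a_2},\dots\}$ and $k$ blocks depend only on the multiplicity vector up to permutation. This follows from Wilson's closed product formula and its symmetry.

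So I will use the $<_r$-relabeling on the $\fmaj$ side and the $\prec$-relabeling on the $\sinv$ side. I also need $|\sInv(\rho)|$ to equal Wilson's strong-inversion count $\inv$ of the $\prec$-relabeled partition; this holds by definition, since the letters of each block are distinct. Wilson's theorem then gives $\sum q^{\maj_W}=\sum q^{\inv_W}$, and multiplying by $q^{\varepsilon+(r-1)(m-k)}$ with $q\mapsto q^r$ finishes the proof.

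\textbf{Main obstacle.} The delicate point is verifying that the colored $\fmaj$ formula of Definition~\ref{def:fmaj-OmP} reduces exactly to $r\cdot\maj_W+\varepsilon+(r-1)(m-k)$. This needs careful bookkeeping of the $-(r-1)$ terms against the star count, including the boundary conventions. I must also confirm that Wilson's statistics (his $\maj$ with decreasing blocks and his $\inv$ with the minimum of later blocks) match these conventions on the nose.
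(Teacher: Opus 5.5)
Your proposal is correct and follows the paper's proof essentially step for step. You peel off the zero block via $\eps'(Z)$, relabel the colored letters by $<_r$ on the $\fmaj$ side and by $\prec$ on the $\sinv$ side, and then apply Wilson's equidistribution together with the symmetry of his generating function, substituting $t=q^r$ and accounting for the $(r-1)(n-k)$ shift. The only quibble is attribution: what you need from Wilson is that the generating function is symmetric in the alphabet variables, not a closed product formula. The paper uses this in the form ``symmetric in $\mathbf z$ after truncating to bases at most $N$,'' rather than working fiber by fiber with fixed content as you do.
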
 
\begin{proof}
We first treat $\OmP^r_{n,k}$, that is, the elements with empty zero block, using Wilson's equidistribution theorem for ordered multiset partitions. 
Let $\mathcal A=\{a_1<a_2<\cdots\}$ be a totally ordered alphabet, with variables $\mathbf z=(z_1,z_2,\ldots)$. 
Let $\OmP_{n,k}(\mathcal A)$ be the set of sequences $S_1\mid\cdots\mid S_k$ of $k$ sets with elements in $\mathcal A$ so that $|S_1| + \cdots + |S_k| =n$, where a letter may repeat across different blocks $S_i, S_j$ but not within the same block $S_i$. 
For $\rho\in \OmP_{n,k}(\mathcal A)$, let $(w,T)$ be its descent-starred word, with $w=w_1\cdots w_n$ obtained by sorting each block in decreasing order and dropping the bars, and $T$ the set of indices $i\in [n-1]$ such that $w_i$ and $w_{i+1}$ lie in the same block. 
Let $\Des(w)$ be the set of descents of $w$, that is, indices $i\in [n-1]$ such that $w_i>w_{i+1}$. 
Define statistics
	\begin{equation}
		\maj^1(\rho):=\maj(w,T)=\sum_{i\in \Des(w)}i- \sum_{i \in T}
		\#\Des(w) \cap [i,n-1],
	\end{equation}
    and
	\begin{equation}
		\inv^1(\rho):=\sum_{1\le h<i\le k}\#\left\{s\in S_h:s>\min S_i\right\}.
	\end{equation}
	Wilson's theorem \cite{WilsonOMP} implies that
	\begin{equation}\label{eq:Wilson-used}
		\sum_{\rho\in\OmP_{n,k}(\mathcal A)}
		t^{\maj^1(\rho)}\mathbf z^\rho =
		\sum_{\rho\in\OmP_{n,k}(\mathcal A)}
		t^{\inv^1(\rho)}\mathbf z^\rho,
	\end{equation}
	and that this generating function is symmetric in $\mathbf{z}$.

	Fix $N \geq 1$, and consider only the $rN$ colored letters with bases at most $N$, which in the $<_r$-order are
	\begin{equation}
		1^{r-1}<_r\cdots<_rN^{r-1}<_r1^{r-2}<_r\cdots<_rN^{r-2}<_r\cdots<_r1^0<_r\cdots<_rN^0.
	\end{equation}
	Identify them with $a_1<\cdots<a_{rN}$ in this order, set $z_m=0$ for $m > rN$ and $t=q^r$, and replace the variable $z_{j^i}$ corresponding to the colored letter $j^i$ by $q^i x_j^{(i)}$.
	Then $t^{\maj^1(\rho)}\mathbf z^\rho$ becomes $q^{r\maj^1(\rho)+\eps(\rho)}\x^\rho=q^{\fmaj(\rho)-(r-1)(n-k)}\x^\rho$ by Definition~\ref{def:fmaj-OmP}, since each of the $n-k$ starred positions contributes $r-1$ to $\fmaj(\rho)$. 
    Thus the left-hand side of equation~\eqref{eq:Wilson-used} becomes 
	\begin{equation}
		q^{-(r-1)(n-k)}\sum_{\rho}q^{\fmaj(\rho)}\x^\rho,
	\end{equation} 
    summed over the elements $\rho$ of $\OmP^r_{n,k}$ with bases at most $N$.

	On the right-hand side, we instead identify the letters with the alphabet $a_1<\cdots<a_{rN}$ in the order
	\begin{equation}
		1^{r-1}\prec1^{r-2}\prec\cdots\prec1^0 \prec
		2^{r-1}\prec2^{r-2}\prec\cdots\prec2^0 \prec\cdots
	\end{equation}
    before making the same substitution, which is permitted since the generating function is symmetric in $z_1,\ldots,z_{rN}$.
	Then $\inv^1(\rho)$ becomes $|\sInv(\rho)|$, the number of strong inversion pairs computed with respect
	to $\prec$, and $r|\sInv(\rho)|+\eps(\rho)=\sinv(\rho)-(r-1)(n-k)$ by Definition~\ref{def:sinv-winv}, so the right-hand side becomes
	\begin{equation}
		q^{-(r-1)(n-k)}\sum_{\rho}q^{\sinv(\rho)}\x^\rho
	\end{equation}
    over the same set.
    As $N$ is arbitrary, for all $r \geq 1$ and $n, k\ge 0$,
	\begin{equation}\label{eq:fmaj-sinv-no-zero}
		\sum_{\rho\in\OmP^r_{n,k}}
		q^{\fmaj(\rho)}\x^\rho =
		\sum_{\rho\in\OmP^r_{n,k}}
		q^{\sinv(\rho)}\x^\rho.
	\end{equation}

    It only remains to add the zero block. 
    For $\sigma = (Z,\rho) \in \SOmP^r_{n,k}$ we have $q^{\fmaj(\sigma)}\x^\sigma=q^{\eps'(Z)}\x^{Z} q^{\fmaj(\rho)}\x^\rho$ by Definition~\ref{def:fmaj-SOmP-primed} (which agrees with Definition~\ref{def:fmaj-SOmP} by Lemma~\ref{lem:fmaj-equiv}), and likewise $q^{\sinv(\sigma)}\x^\sigma=q^{\eps'(Z)}\x^{Z}q^{\sinv(\rho)}\x^\rho$ by Definition~\ref{def:sinv-winv}.
    For $1 \leq i \leq r-1$, let $m_i$ be the number of elements of color $i$ in $Z$; summing $q^{\eps'(Z)}\x^Z$ over all the zero blocks with these multiplicities gives
	\begin{equation}
		\prod_{i=1}^{r-1} e_{m_i}\!\left(q^{i-1}\x^{(i)}\right).
	\end{equation}
	Hence applying equation~\eqref{eq:fmaj-sinv-no-zero} gives
	\begin{align*}
		\sum_{\rho\in\SOmP^r_{n,k}}q^{\sinv(\rho)}\x^\rho & =\sum_{0\le m_1,\ldots, m_{r-1}}\prod_{i=1}^{r-1} e_{m_i}\!\left(q^{i-1}\x^{(i)}\right)\times\sum_{\sigma\in \OmP^r_{n-m_1-\cdots-m_{r-1},k}}q^{\sinv(\sigma)}\x^\sigma \\
		& =\sum_{0\le m_1,\ldots, m_{r-1}}\prod_{i=1}^{r-1} e_{m_i}\!\left(q^{i-1}\x^{(i)}\right)\times\sum_{\sigma\in \OmP^r_{n-m_1-\cdots-m_{r-1},k}}q^{\fmaj(\sigma)}\x^\sigma \\
		& =\sum_{\rho\in \SOmP^r_{n,k}}q^{\fmaj(\rho)}\x^\rho.
	\end{align*}
\end{proof}

\subsection{Applying the \texorpdfstring{$\omega$}{omega} involution}\label{subsec:involution}

Next we apply the $\omega$ involution to
\begin{equation}
	D_{n,k}(\x,q) = \sum_{\rho\in \SOmP^r_{n,k}} q^{\sinv(\rho)}\x^\rho,
\end{equation}
using $r$-colored quasisymmetric functions.
We first extend $\omega$ to $\QSym^r_n$ via an involution on signed sets.
\begin{definition}
    The \emph{dual} of a signed set $\sigma=(T,\varepsilon)$ is the signed set $\omega(\sigma):=(T',\varepsilon)$, where for $i \in [n-1]$,
	\begin{itemize}
		\item if $\varepsilon(i)=\varepsilon(i+1)$, then $i\in T'$ if and only if $i\not\in T$;
		\item if $\varepsilon(i)\ne \varepsilon(i+1)$, then $i\in T'$ if and only if $i\in T$.
	\end{itemize}
\end{definition}
It is easy to verify that $\omega(\sigma)$ is a signed set (Definition~\ref{def: signed-set}) and that $\omega$ is an involution of signed sets.
When $(T,\varepsilon)=\sDes(\QQ)$ for some $\QQ\in \SYT^r_n$, the descents in $T$ are of two kinds: those $i$ with $i+1$ in a lower row of the same tableau as $i$, and those $i$ with $i+1$ in a tableau strictly to the left. 
Then $\omega$ swaps descents and non-descents of the first kind, while preserving those of the second kind.
Linearly extending the map $F_{\sigma}(\x)\mapsto F_{\omega(\sigma)}(\x)$ gives an involution of $\QSym^r_n$, which restricts to the usual $\omega$ involution on $\Lambda^r(\x)$, justifying the notation.

\begin{lemma}\label{lem: QSym-involution}
	Suppose $F(\x)\in \Lambda^r(\x)_n$ expands as a $\C$-linear combination of the $F_{\sigma}(\x)$ as
	\begin{equation}
		F(\x)=\sum_{\sigma\in S^r_n}c_\sigma\cdot F_{\sigma}(\x).
	\end{equation}
	Then
	\begin{equation}
		\omega\cdot F(\x)=\sum_{\sigma\in S^r_n}c_\sigma\cdot F_{\omega(\sigma)}(\x).
	\end{equation}
\end{lemma}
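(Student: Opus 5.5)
The plan is to reduce the statement to the Schur basis, where $\omega$ is known, and then use linear independence of the $F_\sigma$. Let $\widetilde\omega$ denote the linear map on $\QSym^r_n$ defined by $F_\sigma\mapsto F_{\omega(\sigma)}$. It is well defined because the $F_\sigma$, $\sigma\in S^r_n$, are linearly independent (Poirier), and it is an involution since $\omega$ is an involution on signed sets. The lemma is exactly the assertion that $\widetilde\omega$ agrees with the usual $\omega$ on $\Lambda^r(\x)_n$. Since the coefficients $c_\sigma$ of $F(\x)$ are uniquely determined, the right-hand side equals $\widetilde\omega F$. By linearity it therefore suffices to check $\widetilde\omega\, s_{\llambda}=s_{\llambda'}$ for every $\llambda\vdash_r n$.

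By Lemma~\ref{lem: schur-to-Qsym}, $s_{\llambda}=\sum_{\QQ\in\SYT^r(\llambda)}F_{\sDes(\QQ)}$, so $\widetilde\omega s_{\llambda}=\sum_{\QQ}F_{\omega(\sDes(\QQ))}$. Let $\QQ'=(Q^{(0)\prime},\dots,Q^{(r-1)\prime})$ be obtained by transposing each component. This is a bijection $\SYT^r(\llambda)\to\SYT^r(\llambda')$ that preserves the coloring $\eps$, since entries stay in the same component.

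The key step is to show $\sDes(\QQ')=\omega(\sDes(\QQ))$. Fix $i\in[n-1]$. If $\eps(i)\neq\eps(i+1)$, then whether $i$ is a descent depends only on comparing the indices of the tableaux containing $i$ and $i+1$, so the condition is identical for $\QQ$ and $\QQ'$. This matches the rule that $\omega$ preserves such positions.

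If $\eps(i)=\eps(i+1)$, then $i$ and $i+1$ lie in the same standard tableau $Q^{(j)}$. There, $i+1$ lies either strictly below $i$ (and weakly left of it) or weakly above $i$ (and strictly right of it), and these two cases are exchanged by transposition. So $i$ is a descent of $\QQ'$ exactly when it is not a descent of $\QQ$, matching the rule that $\omega$ toggles such positions. Summing over $\QQ$ then gives $\widetilde\omega s_{\llambda}=\sum_{\QQ'\in\SYT^r(\llambda')}F_{\sDes(\QQ')}=s_{\llambda'}=\omega s_{\llambda}$, using Lemma~\ref{lem: schur-to-Qsym} again.

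I expect no real obstacle. The only delicate point is the standard fact that in a standard Young tableau $i+1$ lies either strictly below and weakly left of $i$, or weakly above and strictly right of it. This is what makes transposition toggle the same-color descents exactly.
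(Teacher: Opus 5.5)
Your proposal is correct and follows the same route as the paper: reduce to the Schur basis, expand via Lemma~\ref{lem: schur-to-Qsym}, and conjugate each component tableau, using that this toggles same-tableau descents and preserves cross-tableau ones, so that $\sDes(\QQ')=\omega(\sDes(\QQ))$. You add only more detail than the paper, namely the well-definedness of the map $F_\sigma\mapsto F_{\omega(\sigma)}$ via linear independence and the standard positional fact about $i$ and $i+1$ in a standard tableau.
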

\begin{proof}
	It suffices to check this on the basis $\{s_{\llambda}(\x):\llambda\vdash_r n\}$.
	By Lemma~\ref{lem: schur-to-Qsym},
	\begin{equation}
		\omega\cdot s_{\llambda}(\x)=s_{\llambda'}(\x)=\sum_{\QQ\in \SYT^r(\llambda')}
		F_{\sDes(\QQ)}(\x)=\sum_{\QQ\in \SYT^r(\llambda)}F_{\sDes(\QQ')}(\x),
	\end{equation}
	where $\QQ\mapsto \QQ'$ conjugates each tableau.
    Since conjugation swaps descents and non-descents within a tableau, and preserves those across different tableaux, $\sDes(\QQ')=\omega(\sDes(\QQ))$, and the lemma follows.
\end{proof}

\begin{lemma}\label{lem: omega-sinv-to-winv}
	We have
	\begin{equation}
		\omega\left(\sum_{\rho\in\SOmP^r_{n,k}}q^{\sinv(\rho)}\x^\rho\right)=\sum_{\rho\in\SOMP^r_{n,k}}q^{\winv(\rho)}\x^\rho.
	\end{equation}
\end{lemma}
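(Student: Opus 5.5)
The plan is to expand the left-hand side in the fundamental basis $\{F_\sigma\}$ of $\QSym^r_n$, apply Lemma~\ref{lem: QSym-involution} term by term, and then recognize the result as a sum over $\SOMP^r_{n,k}$. This is the colored analogue of the standard argument of Haglund--Remmel--Wilson and Wilson for $\inv$ on ordered multiset partitions.

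\emph{Step 1 (standardization).} For each standard object $\tau=(Z,\rho)\in\SOP^r_{n,k}$, I group together all $\sigma\in\SOmP^r_{n,k}$ that standardize to $\tau$. Standardization relabels the entries of $\sigma$ by $1,\dots,n$ in $\prec$-increasing order, so bases are compared first and then colors, with the higher color smaller. Equal colored letters can only occur in distinct blocks. I break ties between them by giving the entry in the \emph{right} block the smaller label. With this choice a tie never creates a strong inversion, so $\sinv(\sigma)=\sinv(\tau)$ and $\eps'$ of the zero block is preserved.

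Let $\eps(m)$ be the color of label $m$ in $\tau$. Let $T\subseteq[n-1]$ be the set of $m$ for which labels $m,m+1$ are forced to have distinct bases in any $\sigma$ standardizing to $\tau$. This happens exactly in the following cases:
\begin{itemize}
\item $\eps(m)<\eps(m+1)$;
\item $\eps(m)=\eps(m+1)$ and $m,m+1$ lie in the same block (including the zero block);
\item $\eps(m)=\eps(m+1)$ and $m+1$ lies in a block strictly to the right of $m$.
\end{itemize}
Then $(T,\eps)$ is a signed set, and $\sum_{\sigma\mapsto\tau}\x^\sigma=F_{(T,\eps)}(\x)$. Hence
\[
\sum_{\sigma\in\SOmP^r_{n,k}}q^{\sinv(\sigma)}\x^\sigma=\sum_{\tau\in\SOP^r_{n,k}}q^{\sinv(\tau)}F_{(T(\tau),\eps(\tau))}(\x).
\]

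\emph{Step 2 (apply $\omega$).} The left-hand side is symmetric by Lemmas~\ref{lem: Dnk-to-fmaj} and~\ref{lem: fmaj-to-sinv}. So Lemma~\ref{lem: QSym-involution} applies, and $\omega$ of it equals $\sum_\tau q^{\sinv(\tau)}F_{\omega(T,\eps)}$. The dual keeps the condition $\eps(m)<\eps(m+1)$ and complements $T$ on same-color pairs. After dualizing, a same-color pair $m,m+1$ is strict exactly when $m+1$ lies in a block weakly to the \emph{left} of $m$.

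\emph{Step 3 (reinterpret as $\SOMP$).} I then show that $F_{\omega(T,\eps)}$ is the generating function of all $\sigma\in\SOMP^r_{n,k}$ whose ``weak standardization'' is $\tau$. Weak standardization breaks ties between equal letters by giving the entry in the \emph{left} block, or equivalently the earlier entry within the same block or zero block, the smaller label. This tie rule is what allows repeated letters inside a block and inside the zero block, turning sets into multisets. In the zero block, the factor $e_m(q^{i-1}\x^{(i)})$ correspondingly becomes $h_m(q^{i-1}\x^{(i)})$.

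It remains to check that $\winv(\sigma)=\sinv(\tau)=\winv(\tau)$. The point is that a tie $s=\min_\prec S_j$ with $s$ in a block left of $S_j$ counts as a weak inversion, and under the left-smaller tie rule its standardized labels satisfy $s\succ\min S_j$, giving a strong inversion of $\tau$. Summing over $\tau$ then yields $\sum_{\sigma\in\SOMP^r_{n,k}}q^{\winv(\sigma)}\x^\sigma$.

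\emph{Main obstacle.} I expect the hard part to be the bookkeeping in Steps 1 and 3. One must verify that the descent set $T$ built from the right-smaller tie rule, once complemented on same-color positions, is exactly the strictness set for the left-smaller rule on multisets. This includes boundary cases: consecutive labels in the zero block versus an ordinary block, and ties involving $\min_\prec S_j$. One must also check that both standardization maps are bijections onto fibers of the stated form. I would verify this by a case analysis on the relative block positions of labels $m$ and $m+1$.
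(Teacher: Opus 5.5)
Your architecture (standardize, expand in the $F_\sigma$, apply Lemma~\ref{lem: QSym-involution}, then reinterpret through a second standardization) is the paper's. However, both of your tie-breaking rules point the wrong way, so the two statistic-preservation claims the argument rests on are false.

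In Step~1, suppose the copy in the right block receives the smaller label and that copy is $\min_\prec S_j$. Then the left copy becomes strictly $\succ\min_\prec S_j$ in $\tau$, so a tie that is not a strong inversion of $\sigma$ becomes one of $\tau$. For instance, take $r=1$ and $n=k=2$. Then $\sigma=1\mid 1$ has $\sinv(\sigma)=0$, but it standardizes to $\tau=2\mid 1$ with $\sinv(\tau)=1$. Summing over $\SOmP^1_{2,2}$, the true left-hand side is $\sum_{a,b}q^{[a>b]}x_ax_b=h_2+qe_2$, whereas your expansion gives $q^0F_{\{1\}}+q^1F_{\varnothing}=e_2+qh_2$.

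Symmetrically, in Step~3 the left-smaller rule sends a weak-inversion tie to a non-inversion, not to $s\succ\min_\prec S_j$. For example, $1\mid 1\in\SOMP^1_{2,2}$ has $\winv=1$ but standardizes to $1\mid 2$, which has $\winv=0$. The two reversals do not cancel: applying $\omega$ to your Step~1 expansion gives $h_2+qe_2$, while $\sum_{\rho\in\SOMP^1_{2,2}}q^{\winv(\rho)}\x^\rho=e_2+qh_2$.

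Your Step~2 sentence is also inconsistent with your own $T$. Complementing \emph{same block or $m+1$ strictly right} on same-color pairs gives \emph{$m+1$ strictly left}, not \emph{weakly left}.

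The repair is to swap both conventions, which gives exactly the paper's $\stdLR$ and $\stdRL$.

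On $\SOmP^r_{n,k}$, give the \emph{left} copy the smaller label. A tie with $\min_\prec S_j$ then stays a non-inversion, so $\sinv$ is preserved. The strictness set becomes $T=\{m:\eps(m)<\eps(m+1)\}\cup\{m:\eps(m)=\eps(m+1),\ m+1\text{ lies to the left of }m\}$. Here \emph{left} refers to the reading order (zero block, then $S_1,\dots,S_k$, each written $\prec$-decreasingly), so it includes the same-block case. Dualizing makes a same-color pair strict exactly when $m+1$ lies in a block strictly to the right of $m$.

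On $\SOMP^r_{n,k}$, give the \emph{right} copy the smaller label. Within a block or the zero block, the rightmost copy gets the smallest label, so blocks remain decreasing. A weak-inversion tie then becomes a strong inversion of $\tau$, so $\winv(\sigma)=\winv(\tau)=\sinv(\tau)$. Equal letters can occupy labels $m,m+1$ precisely when $m+1$ lies weakly to the left of $m$, so the fibers are counted by $F_{\omega(T,\eps)}$.

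With these two corrections the rest of your outline goes through.
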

\begin{proof}
    Throughout the proof, write the entries of an element of $\SOmP^r_{n,k}$ in the following order: the zero block, then $S_1,\ldots,S_k$, each in decreasing $\prec$-order; \emph{left} and \emph{right} refer to this order.
	Given $\rho\in \SOmP^r_{n,k}$, we can standardize it by replacing its entries by $1,2,\ldots$ in $\prec$-order, preserving the colors, and breaking ties left-to-right.
	This yields an element $\stdLR(\rho)$ of $\SOP^r_{n,k}$, whose blocks still have their elements in decreasing $\prec$-order, and $\sinv(\stdLR(\rho))=\sinv(\rho)$. 

    Conversely, the $\rho\in \SOmP^r_{n,k}$ with $\stdLR(\rho)=\rho'$ are obtained from $\rho'\in \SOP^r_{n,k}$ by replacing $1,2,\ldots$ by a weakly increasing sequence of positive integers, preserving colors, where the replacements of two entries $i^{k}$ and $(i+1)^{\ell}$ of $\rho'$ may coincide if and only if  either $k>\ell$, or $k=\ell$ and $i+1$ occurs to the right of $i$ (in the latter case, $i^k$ and $(i+1)^k$ lie in different blocks, so no repetition within a block arises).

	Accordingly, for $\rho'\in \SOP^r_{n,k}$ define $\sDes(\rho') = (T,\varepsilon)$ by $\varepsilon(i)=j$ for $i\in [n]$ whenever $i^j$ appears in $\rho'$, and $i \in T$ for $i\in [n-1]$ if and only if either
		      \begin{enumerate}
			      \item $\varepsilon(i)<\varepsilon(i+1)$, or
			      \item $\varepsilon(i)=\varepsilon(i+1)$ and $i+1$ occurs to the left of $i$.
		      \end{enumerate}
	This is a signed set, and $T$ is exactly the set of $i$ at which the replacements may not coincide. 
    Thus the monomials $\x^{\rho}$ with $\stdLR(\rho)=\rho'$ are precisely the terms of $F_{\sDes(\rho')}(\x)$, and
	\begin{equation}\label{eq: sinv-to-QSym}
		\sum_{\rho\in\SOmP^r_{n,k}}q^{\sinv(\rho)}\x^\rho=\sum_{\rho\in \SOP^r_{n,k}}q^{\sinv(\rho)}
		F_{\sDes(\rho)}(\x)=\sum_{\rho\in \SOP^r_{n,k}}q^{\winv(\rho)}F_{\sDes(\rho)}(\x),
	\end{equation}
	where the last equality holds because $\winv$ and $\sinv$ agree on $\SOP^r_{n,k}$.

	Similarly, given $\rho\in \SOMP^r_{n,k}$, we describe a different standardization: replace its entries by $1,2,\ldots$ in $\prec$-order, preserving colors but breaking ties right-to-left instead. 
    This yields $\stdRL(\rho)\in \SOP^r_{n,k}$, with $\winv(\stdRL(\rho))=\winv(\rho)$.

    The $\rho\in\SOMP^r_{n,k}$ with $\stdRL(\rho)=\rho'$ are obtained by the same replacement, where now the replacements of $i^k$ and $(i+1)^\ell$ may coincide if and only if either $k>\ell$, or $k=\ell$ and $i+1$ appears to the \emph{left} of $i$. 
    The set of $i$ at which they may not coincide is $T'$, where $(T',\varepsilon)=\omega(\sDes(\rho'))$, so
	\begin{equation}\label{eq: winv-to-QSym}
	\sum_{\rho\in\SOMP^r_{n,k}}q^{\winv(\rho)}\x^\rho=\sum_{\rho\in \SOP^r_{n,k}}q^{\winv(\rho)}
		F_{\omega(\sDes(\rho))}(\x).
	\end{equation}
    The left-hand side of equation~\eqref{eq: sinv-to-QSym} equals $D_{n,k}\in\Lambda^r(\x)$ by Lemmas~\ref{lem: Dnk-to-fmaj} and~\ref{lem: fmaj-to-sinv}.
    So applying Lemma~\ref{lem: QSym-involution} to it and comparing with equation~\eqref{eq: winv-to-QSym} proves the lemma.
\end{proof}

For example, $\rho=1^11^2\mid 3^2\mid 4^12^1\mid 5^02^0\mid 2^1\in \SOmP^3_{8,4}$ has $\stdLR(\rho)=2^11^2\mid 6^2 \mid7^13^1 \mid 8^05^0 \mid4^1$. 
The $2^1$'s become $3^1$ and $4^1$ left-to-right, while $2^0$ becomes $5^0$ because $2^1\prec 2^0$. 
Similarly $\sigma=1^11^11^2\mid 3^2\mid 4^12^1\mid 5^02^0\mid 2^1\in \SOMP^3_{9,4}$ has $\stdRL(\sigma)=3^12^11^2\mid 7^2\mid 8^15^1\mid 9^06^0\mid 4^1$.

\subsection{A recursion for \texorpdfstring{$I_{n,k}$}{I(n,k)}}\label{subsec:recursion-ideal}
Write
\begin{equation}
	I_{n,k}(\x,q):=\sum_{\rho\in \SOMP^r_{n,k}}q^{\winv(\rho)}\x^\rho,
\end{equation}
so that Lemmas \ref{lem: Dnk-to-fmaj}, \ref{lem: fmaj-to-sinv}, and \ref{lem: omega-sinv-to-winv} give
\begin{equation}\label{eq: Dnk-to-Ink}
	\omega \cdot D_{n,k}=I_{n,k}.
\end{equation}
Since $\omega$ exchanges $e_j(\x^{(i)})^\perp$ and $h_j(\x^{(i)})^\perp$, the recursion of Proposition~\ref{lem: Cnk-recursion} for $D_{n,k}$ is equivalent to the following recursion for $I_{n,k}$.
We say $\rho\in \SOMP^r_{n,k}$ has \emph{content} $\bbeta$ if $\bbeta$ is an $r$-composition such that $\beta^{(i)}_j$ equals the number of times $j^i$ appears in $\rho$.
For an $r$-composition $\bbeta$ we write $\x^\bbeta$ for the monomial in which the exponent of $x^{(c)}_b$ equals $\beta^{(c)}_b$.

\begin{lemma}\label{lem: Ink-recursion}
	For $0\le k\le n$, $0\le i\le r-1$, and $1\le j\le n$,
	\begin{equation}
		h_j\bigl(\x^{(i)}\bigr)^\perp I_{n,k}
		=\sum_{\ell=0}^{\min(j,k)}
		\kappa^{(i)}_{k;j,\ell}\,I_{n-j,k-\ell}
	\end{equation}
	with the coefficients $\kappa^{(i)}_{k;j,\ell}$ of equation~\eqref{eq:common-branching-coefficient}.
\end{lemma}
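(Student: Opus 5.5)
The plan is to compute both sides of the claimed identity through their monomial coefficients, reducing $h_j(\x^{(i)})^\perp$ to the insertion of $j$ copies of a new, $\prec$-largest letter of color $i$. Then I will sort the elements of $\SOMP^r_{n,k}$ by where those copies sit.

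\emph{Step 1 (reduction).} Since $\langle h_{\bbeta},m_{\mmu}\rangle=\delta_{\bbeta,\mmu}$, the coefficient of $m_{\mmu}$ in $h_j(\x^{(i)})^\perp I_{n,k}$ equals the coefficient of $m_{\mmu\cup j}$ in $I_{n,k}$. Here $\mmu\cup j$ means adjoining a part $j$ to $\mu^{(i)}$. Because $I_{n,k}=\omega D_{n,k}\in\Lambda^r(\x)$ by equation~\eqref{eq: Dnk-to-Ink}, this coefficient can be read off any monomial of that type. For an $r$-composition $\bbeta$ of $n-j$ supported on bases $<M$, it is therefore enough to show
\begin{equation*}
[\x^{\bbeta}(x^{(i)}_M)^j]\, I_{n,k}=\sum_{\ell=0}^{\min(j,k)}\kappa^{(i)}_{k;j,\ell}\,[\x^{\bbeta}]\, I_{n-j,k-\ell}.
\end{equation*}
The same argument applies to every $\bbeta$, so this gives the lemma.

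\emph{Step 2 (removal map).} Fix $\rho\in\SOMP^r_{n,k}$ with content $\bbeta$ plus $j$ copies of $M^i$. The letter $M^i$ is $\succ$-larger than every other letter of $\rho$, so within each block the copies of $M^i$ come first. Call a non-zero block \emph{pure} if it consists only of copies of $M^i$, and let $\ell$ be the number of pure blocks. Deleting all copies of $M^i$, and with them the pure blocks, gives $\rho_0\in\SOMP^r_{n-j,k-\ell}$ of content $\bbeta$. Conversely, $\rho$ is recovered from $\rho_0$ by two independent choices:
\begin{itemize}
\item[(a)] the positions of $\ell$ new blocks, each seeded with one copy of $M^i$, among the final $k$ non-zero blocks;
\item[(b)] a multiset of $j-\ell$ further copies distributed among the $k$ non-zero blocks, and also the zero block when $i\ge1$, since the zero block forbids color $0$.
\end{itemize}
Multiset blocks permit repeated copies, so the choices in (b) are unrestricted.

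\emph{Step 3 (weight bookkeeping).} I will show $\winv(\rho)-\winv(\rho_0)$ is a sum of contributions, one per copy of $M^i$.
\begin{itemize}
\item Every copy $M^i$ in a non-zero block with $a$ non-zero blocks to its right creates exactly $a$ weak inversions, since $M^i\succeq\min_\prec S$ for every block $S$. Inserting $M^i$ creates no other inversions: a pure block has minimum $M^i$, which only other copies of $M^i$ weakly exceed, and those are already counted. It also does not change the minimum of any non-pure block.
\item Every copy adds $i$ to $\eps(\rho)$.
\item A non-seed copy in a non-zero block raises $\ell-k$ in Definition~\ref{def:sinv-winv} by $1$, adding $r-1$. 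A seed copy raises the size and the block count together, adding nothing.
\item A copy in the zero block contributes $i-1$ through $\eps'$.
\end{itemize}
Hence a seed with $a$ blocks to its right has weight $q^{i+ra}$, $0\le a\le k-1$. Summing over the $\ell$ distinct positions gives $e_\ell(U_{k,i})$. A non-seed copy in a non-zero block with $a$ blocks to its right, $0\le a\le k-1$, has weight $q^{r-1+i+ra}$, and a copy in the zero block has weight $q^{i-1}$. For $i\ge1$ these weights are exactly the $k+1$ values $q^{i-1+ra'}$, $0\le a'\le k$. For $i=0$, where the zero block is excluded, they are the $k$ values $q^{r-1+ra}$, $0\le a\le k-1$. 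In both cases the alphabet is $V_{k,i}$, and multisets of size $j-\ell$ give $h_{j-\ell}(V_{k,i})$. Multiplying the two factors yields $\kappa^{(i)}_{k;j,\ell}$, and summing over $\ell$ completes the proof.

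The main obstacle I expect is the inversion bookkeeping in Step 3. I must check that the weak inversions contributed by seeds and extra copies never double count or interfere when several pure blocks are adjacent, or when extra copies sit to the left of pure blocks. I must also check that inserting $M^i$ never changes $\min_\prec$ of an existing block. I would verify the first point directly from the definition of $\wInv$ via the ``count blocks to the right'' description above.
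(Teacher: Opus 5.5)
Your proposal is correct and follows essentially the same route as the paper's proof. You extract the coefficient of $\x^{\bbeta}(x^{(i)}_M)^j$, which is equivalent to the paper's pairing with $h_{\widehat{\bbeta}}=h_j(\x^{(i)})h_{\bbeta}$. You then delete the $j$ copies of the new $\prec$-maximal letter along with the $\ell$ blocks that become empty, and rebuild by choosing seed positions (weight $e_\ell(U_{k,i})$) and a multiset of $j-\ell$ further copies (weight $h_{j-\ell}(V_{k,i})$). One small wording fix: only copies in non-zero blocks add $i$ to $\eps(\rho)$, while a copy in the zero block contributes just $i-1$ through $\eps'(Z)$; this is exactly the weight $q^{i-1}$ you end up using, so the computation itself is unaffected.
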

\begin{proof}
	By equation~\eqref{eq: Dnk-to-Ink}, $I_{n,k}$ is in $\Lambda^r(\x)$. 
    Fix an $r$-composition $\bbeta$ of $n-j$ and choose a positive integer $N$ larger than $\ell(\beta^{(c)})$ for every $0\le c\le r-1$.
	Let $\widehat{\bbeta}$ be the $r$-composition obtained from $\bbeta$ by adding a part $j$ in the $N$th position of $\beta^{(i)}$ (adding zeros in between as needed).
    Then $h_{\widehat{\bbeta}}=h_j(\x^{(i)})h_{\bbeta}$, so
	\begin{equation}\label{eq: xbeta-coeff}
		\left\langle h_j\bigl(\x^{(i)}\bigr)^\perp I_{n,k}, h_{\bbeta} \right\rangle = \left\langle I_{n,k},h_{\widehat{\bbeta}}\right\rangle.
	\end{equation}
	Since $\{h_\lambda\}$ and $\{m_\lambda\}$ are dual bases of the ring of symmetric functions in one set of variables $\Lambda$, the right-hand side is the coefficient of $\x^{\widehat{\bbeta}}$ in $I_{n,k}$, which is the generating function $\sum q^{\winv(\sigma)}$ summed over objects $\sigma \in\SOMP^r_{n,k}$ of content $\widehat{\bbeta}$.

	Deleting the $j$ copies of $N^i$ from $\sigma$, together with any blocks that become empty, say $\ell$ become empty, results in $\rho\in\SOMP^r_{n-j,k-\ell}$ of content $\bbeta$.

	Conversely, $\sigma$ is recovered from $\rho$ by inserting $\ell$ new singleton blocks $\{N^i\}$ and then distributing the remaining $j-\ell$ copies of $N^i$ among the $k$ non-zero blocks of $\sigma$ (the $\ell$ new blocks together with the $k-\ell$ blocks of $\rho$), and into the zero block when $i > 0$.
	We compute the generating function of these choices.

    Since $N^i$ is larger in $\prec$-order than every letter of $\rho$, a copy of $N^i$ forms a weak inversion with each block to its right and with nothing else, and it does not change the minimum of its block.
    A new singleton block with $p$ blocks to its right changes $\winv$ by $i + rp$, so the generating function for the positions of the $\ell$ new blocks among the $k$ blocks of $\sigma$ is $e_\ell \left( q^i,q^{i+r},\ldots,q^{i+r(k-1)} \right) = e_\ell(U_{k,i})$.
	It remains to place the other $j-\ell$ copies of $N^i$.

	An additional copy placed in a block with $p$ blocks to its right changes $\winv$ by $i+(r-1)+rp$, and when $i>0$, a copy placed in the zero block changes it by $i-1$. 
    These weights are the elements of $V_{k,i}$, and since repetitions are allowed, their generating function is $h_{j-\ell}(V_{k,i})$.
	Consequently, for fixed $\ell$, the extensions of $\rho$ to $\sigma$ have generating function $e_\ell(U_{k,i})h_{j-\ell}(V_{k,i}) = \kappa^{(i)}_{k;j,\ell}$, which is independent of $\rho$.
	Summing first over all $\rho\in\SOMP^r_{n-j,k-\ell}$ of content $\bbeta$, and then over $\ell$, gives
	\begin{equation}
		\left\langle h_j\bigl(\x^{(i)}\bigr)^\perp I_{n,k}, h_{\bbeta} \right\rangle = \sum_{\ell=0}^{\min(j,k)} \kappa^{(i)}_{k;j,\ell} \left\langle I_{n-j,k-\ell}, h_{\bbeta} \right\rangle.
	\end{equation}
	Since the $h_{\bbeta}$ span $\Lambda^r(\x)_{n-j}$, the lemma follows.
\end{proof}

Now we are ready to prove Theorem~\ref{thm:main-wreath}, that is, equation~\eqref{eq: wreath-poduct-Frob-formula}.
\begin{proof}[Proof of Theorem~\ref{thm:main-wreath}]
	It suffices to show $C_{n,k}(\x,q)=D_{n,k}(\x,q)$ for all $0\le k\le n$, which we show by strong induction on $n$. 
    For the base case of $n=0$, we have $C_{0,0}=D_{0,0}=1$.
    Assume the claim holds for all $m < n$.
    Since $\omega$ is an isometric involution of $\Lambda^r(\x)$ with $\omega h_j(\x^{(i)})=e_j(\x^{(i)})$, it follows that $e_j(\x^{(i)})^\perp=\omega h_j(\x^{(i)})^\perp\omega$.
    Thus by equation~\eqref{eq: Dnk-to-Ink}, Lemma~\ref{lem: Ink-recursion}, the inductive hypothesis, and Proposition~\ref{lem: Cnk-recursion},
	\begin{equation}\label{eq:kappa-D}
    \begin{aligned}
		e_j\left(\x^{(i)}\right)^\perp D_{n,k} &= \omega h_j\left(\x^{(i)}\right)^\perp I_{n,k}\\
        &= \sum_{\ell=0}^{\min(j,k)}\kappa^{(i)}_{k;j,\ell}\omega I_{n-j,k-\ell} =\sum_{\ell=0}^{\min(j,k)}\kappa^{(i)}_{k;j,\ell} D_{n-j,k-\ell}\\
        &=\sum_{\ell=0}^{\min(j,k)}\kappa^{(i)}_{k;j,\ell} C_{n-j,k-\ell}
        = e_j\left(\x^{(i)}\right)^\perp C_{n,k}
	\end{aligned}
    \end{equation}
	 for all $0\le i\le r-1$, $1 \leq j \leq n$, and $0\le k\le n$.
     For $j>n$, both $e_j(\x^{(i)})^\perp C_{n,k}$ and $e_j(\x^{(i)})^\perp D_{n,k}$ vanish for degree reasons.
By Lemma~\ref{lem: perp-recursion-equivalence}, $D_{n,k} = C_{n,k}$, completing the induction.
\end{proof}

\section{Hook components and consequences}\label{sec:specialization-B}

\subsection{The main theorem at \texorpdfstring{$r=2$}{r=2} and hook components}\label{subsec:B-hook-components}
The main theorem, specialized at $r=2$, yields the following.
\begin{theorem}\label{thm:main-B}
	For $n \geq 1$, the bigraded Frobenius series of $R^{(1,1)}_{\mathfrak{B}_n}$ is given by
	\begin{equation}
		\grFrob(R^{(1,1)}_{\mathfrak{B}_n};q;z)=\sum_{\QQ\in \SYT^2_n}q^{\fmaj(\QQ)}\prod_{j=1}^{\fdes(\QQ)}\left(1+\frac{z}{q^{2j-1}}\right)s_{\shape(\QQ)}(\x).
	\end{equation}
\end{theorem}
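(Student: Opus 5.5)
This is the $r=2$ specialization of Theorem~\ref{thm:main-wreath}, so the plan is simply to substitute $r=2$ there and check that every ingredient matches the type $B$ objects. First, $\Z_2\wr\mathfrak{S}_n$ is the hyperoctahedral group $\mathfrak{B}_n$: $\zeta=-1$, and the action \eqref{eq:action-on-variables} becomes $g\cdot x_i=\pm x_{w(i)}$, the usual signed permutation action. Since $\zeta^{-1}=\zeta$, the conventions in Remark~\ref{rmk:conventions} only relabel $U_1\leftrightarrow U_{1}$. Hence, at $r=2$, the labeling of irreducibles by bipartitions $(\lambda^{(0)},\lambda^{(1)})$ is the standard one, and there is no ambiguity about using $V$ versus $V^*$. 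Consequently, $R^{(1,1)}_{\Z_2\wr\mathfrak{S}_n}=R^{(1,1)}_{\mathfrak{B}_n}$ as bigraded modules.

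Second, I will substitute $r=2$ into the product $\prod_{j=1}^{\fdes(\QQ)}(1+z/q^{rj-r+1})$. This gives $\prod_j(1+z/q^{2j-1})$. The statistics $\fmaj(\QQ)=2\maj(\QQ)+|Q^{(1)}|$ and $\fdes(\QQ)$ from Definition~\ref{def:fDes-sDes} are the type $B$ flag statistics of Adin--Roichman. The sum runs over $\SYT^2_n$, the standard bitableaux. Putting these together yields exactly the displayed formula.

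No real obstacle is expected. The only point requiring care is confirming that the representation indexing and the action on variables in Section~\ref{ss:rep_wr} agree with the conventional type $B$ ones. This holds because every character value of $\Z_2$ is real, so complex conjugation in Lemma~\ref{lem:d-dimension-interpretation} and the relabeling of Remark~\ref{rmk:conventions} are trivial when $r=2$.
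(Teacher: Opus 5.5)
Your proposal is correct and matches the paper, which obtains this theorem simply by setting $r=2$ in Theorem~\ref{thm:main-wreath}, where $rj-r+1$ becomes $2j-1$ and $\fmaj(\QQ)=2\maj(\QQ)+|Q^{(1)}|$. Your extra convention checks are accurate: at $r=2$ the relabeling of Remark~\ref{rmk:conventions} is trivial and $V\cong V^*$. They are harmless but not strictly needed, since the paper defines $\mathfrak{B}_n$ as $\Z_2\wr\mathfrak{S}_n$ with these conventions.
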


From this, we can extract the coefficient of the Schur functions indexed by the shapes $((d),(1^{n-d}))$, which can be
thought of as the type $B$ analogue of the hook shapes. At $d=0$, this recovers the multiplicity of the determinant
character from \cite{SwansonWallach1}.

\begin{proposition}\label{prop:type-B-hooks-11}
	For $n \geq 1$, and $0 \leq d \leq n$, we have
	\begin{equation}
		\left\langle \grFrob(R^{(1,1)}_{\mathfrak{B}_n};q;z), s_{((d),(1^{n-d}))}(\x) \right\rangle = \qbinom{n}{d}_{q^2} \prod_{j=1}^{n-d} (z+q^{2j-1}).
	\end{equation}
\end{proposition}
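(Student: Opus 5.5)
By Theorem~\ref{thm:main-B}, the coefficient of $s_{((d),(1^{n-d}))}$ is the sum over $\QQ=(Q^{(0)},Q^{(1)})\in\SYT^2((d),(1^{n-d}))$ of $q^{\fmaj(\QQ)}\prod_{j=1}^{\fdes(\QQ)}(1+z/q^{2j-1})$. The plan is to evaluate this sum in closed form. Such a $\QQ$ is determined by the set $A\subseteq[n]$ of entries in the single row $Q^{(0)}$, with $|A|=d$. Its complement $B=[n]\setminus A$ fills the single column $Q^{(1)}$.

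The first step is to read off the statistics in terms of $A$. An index $i\in[n-1]$ is a descent exactly when $i+1\in B$: if $i,i+1\in B$, then $i+1$ lies below $i$ in the column; if $i\in A$ and $i+1\in B$, it is a cross-tableau descent since $0<1$. It is not a descent when $i+1\in A$. Also $0\in\fDes(\QQ)$ if and only if $1\in B$. Hence
\begin{equation*}
\fDes(\QQ)=\{b-1: b\in B\},\qquad \fdes(\QQ)=n-d,
\end{equation*}
independently of $A$. Writing $B=\{b_1<\cdots<b_{n-d}\}$, we get
\begin{equation*}
\fmaj(\QQ)=2\sum_{t}(b_t-1)+(n-d).
\end{equation*}
Since $\fdes$ is constant, the product pulls out:
\begin{equation*}
\prod_{j=1}^{n-d}\left(1+\frac{z}{q^{2j-1}}\right)=q^{-(n-d)^2}\prod_{j=1}^{n-d}(z+q^{2j-1}),
\end{equation*}
using $\sum_{j=1}^{n-d}(2j-1)=(n-d)^2$.

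The remaining step is the sum $\sum_B q^{2\sum_t(b_t-1)+(n-d)}$ over all $(n-d)$-subsets $B$ of $[n]$. Substitute $b_t-1=(t-1)+\lambda_t$, where $0\le\lambda_1\le\cdots\le\lambda_{n-d}\le d$. Then
\begin{equation*}
2\sum_t(b_t-1)=(n-d)(n-d-1)+2|\lambda|,
\end{equation*}
and the standard identity $\sum_\lambda q^{2|\lambda|}=\qbinom{n}{d}_{q^2}$ (partitions in an $(n-d)\times d$ box) gives
\begin{equation*}
\sum_B q^{\fmaj(\QQ)}=q^{(n-d)(n-d-1)+(n-d)}\qbinom{n}{d}_{q^2}=q^{(n-d)^2}\qbinom{n}{d}_{q^2}.
\end{equation*}
The factor $q^{(n-d)^2}$ cancels the $q^{-(n-d)^2}$ from the product, giving the claimed formula.

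The only point that needs care is the descent bookkeeping in the first step: the boundary cases $i\in B$, $i+1\in A$ (not a descent, since $1>0$ in tableau order) and the flag descent at $0$. Everything else is routine. As a check, $d=0$ recovers the Swanson--Wallach determinant multiplicity $\prod_{j=1}^n(z+q^{2j-1})$.
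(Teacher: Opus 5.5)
Your proof is correct and follows essentially the same route as the paper's. Both arguments parametrize the bitableaux of shape $((d),(1^{n-d}))$ by their flag descent set ($\{b-1 : b\in B\}$ in your notation), use that $\fdes=n-d$ is constant so the product factors out, and evaluate $\sum q^{2\|D\|+n-d}=q^{(n-d)^2}\qbinom{n}{d}_{q^2}$; you also supply the descent bookkeeping and the box-partition computation that the paper leaves as ``one can check.''
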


\begin{proof}
	By Theorem~\ref{thm:main-B}, we write
	\begin{equation}\begin{aligned}
		 & \left\langle \grFrob(R^{(1,1)}_{\mathfrak{B}_n};q;z), s_{((d),(1^{n-d}))}(\x) \right\rangle                                                                             \\
		 & = \left\langle \sum_{\QQ\in \SYT^2_n}q^{\fmaj(\QQ)}\prod_{j=1}^{\fdes(\QQ)}\left(1+\frac{z}{q^{2j-1}}\right)s_{\shape(\QQ)}(\x),  s_{((d),(1^{n-d}))}(\x) \right\rangle \\
		 & = \sum_{\QQ\in \SYT^2_n((d),(1^{n-d}))}q^{\fmaj(\QQ)}\prod_{j=1}^{\fdes(\QQ)}\left(1+\frac{z}{q^{2j-1}}\right).
	\end{aligned}\end{equation}
	Now for any $D \subseteq \{0,\ldots,n-1\}$ with $|D| = n-d$, there exists a unique standard Young bitableau ($2$-tableau) of shape $((d),(1^{n-d}))$ with flag descent set $D$.
	Let $||D||$ denote the sum of the elements of $D$; then $\fmaj(\QQ) = 2||D|| + n-d$ and $\fdes(\QQ) = n-d$.
	Hence we continue, writing that the previous equation is equal to
	\begin{equation}\label{eq:to-factor}
		\sum_{\substack{D \subseteq \{0,\ldots,n-1\}\\|D|=n-d}}q^{2||D||+n-d}\prod_{j=1}^{n-d}\left(1+\frac{z}{q^{2j-1}}\right).
	\end{equation}
	One can check that
	\begin{equation}
		\sum_{\substack{D \subseteq \{0,\ldots,n-1\}\\|D|=n-d}}q^{2||D||+n-d} = q^{(n-d)^2}\qbinom{n}{d}_{q^2}.
	\end{equation}
	Since $\prod_{j=1}^{n-d}\left(1+\frac{z}{q^{2j-1}}\right)$ is not a function of $D$, it can be factored out of equation~\eqref{eq:to-factor}, yielding
	\begin{equation}
		\left(\prod_{j=1}^{n-d}\left(1+\frac{z}{q^{2j-1}}\right)\right) q^{(n-d)^2}\qbinom{n}{d}_{q^2} = \qbinom{n}{d}_{q^2} \prod_{j=1}^{n-d}\left(z+q^{2j-1}\right),
	\end{equation}
	as desired.
\end{proof}

We recall the following.
\begin{proposition}[{\cite[Proposition 3.3]{JiangLentfer}}]
	\label{prop:one-set}
	For $n \geq 1$,
	\begin{equation}
		\grFrob(R_{\mathfrak{B}_n}^{(0,1)}; w) = \sum_{i=0}^{n} w^i s_{((n-i),(1^i))}(\x).
	\end{equation}
\end{proposition}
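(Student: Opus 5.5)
We compute the graded Frobenius series of $R^{(0,1)}_{\mathfrak{B}_n}$ directly. By Proposition~\ref{prop:Weyl-wreath-bf} with $k=0$, $j=1$, $r=2$, the defining ideal is generated by polarized power sums whose total degree is a positive multiple of $2$. In purely fermionic variables, every power sum has degree at most $1$, since $\theta_i^2=0$. So there are no generators, the ideal is zero, and $R^{(0,1)}_{\mathfrak{B}_n}=\bigwedge(V^*)=\bigoplus_{i=0}^n \bigwedge^i V^*$ with $\theta$-degree $i$. It therefore suffices to identify each exterior power $\bigwedge^i V^*$ as the irreducible $V^{((n-i),(1^i))}$.

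For this, take the basis $\theta_J$ with $|J|=i$. The group $\mathfrak{B}_n$ permutes the lines $\C\theta_J$ transitively, acting by $g\cdot\theta_i=\zeta^{-\eps(i)}\theta_{w(i)}$ with $\zeta=-1$. Hence $\bigwedge^i V^*$ is induced from the stabilizer $\mathfrak{B}_{n-i}\times\mathfrak{B}_i$ of the line $\C\theta_{\{n-i+1,\dots,n\}}$, acting on that line by a character. On the factor $\mathfrak{B}_{n-i}$ this character is trivial, i.e.\ $U_0\wr S^{(n-i)}$. On the factor $\mathfrak{B}_i$ it is $\sign(w)\cdot\prod(-1)^{\eps}$, which is $U_1\wr S^{(1^i)}$, since $U_1$ sends $\zeta$ to $\zeta^{-1}=-1$. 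By the definition of $V^{\llambda}$ together with equation~\eqref{eq:Frob-induction}, the Frobenius image is $s_{(n-i)}(\x^{(0)})\,e_i(\x^{(1)})=s_{((n-i),(1^i))}$.

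The only point needing care is the sign and color convention, so we check it against Section~\ref{ss:rep_wr}. The determinant $V^{(\varnothing,(1^n))}$ should correspond to $\theta_1\cdots\theta_n$: $g$ sends this element to $\sign(w)\prod\zeta^{-\eps(i)}\,\theta_1\cdots\theta_n$, and for $r=2$ this scalar is $\sign(w)\chi(g)=\det(g)$. So the top degree $i=n$ gives $s_{(\varnothing,(1^n))}$, consistent with the general formula. Summing $w^i s_{((n-i),(1^i))}(\x)$ over $i$ gives the claim.
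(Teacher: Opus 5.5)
Your proof is correct. The paper does not prove this statement itself; it imports it from \cite[Proposition 3.3]{JiangLentfer}, so there is no internal argument to compare against. What you give is a complete, self-contained derivation in this paper's conventions, in two steps.

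First, the ideal $I^{(0,1)}_{\mathfrak{B}_n}$ is zero. You can see this even without Proposition~\ref{prop:Weyl-wreath-bf}: the sign change $\theta_k\mapsto-\theta_k$ negates every $\theta_J$ with $k\in J$, so the only invariants are constants.

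Second, each $\bigwedge^i V^*$ is the monomial representation induced from the stabilizer $\mathfrak{B}_{n-i}\times\mathfrak{B}_i$ of the line $\C\theta_{\{n-i+1,\ldots,n\}}$. On that line the stabilizer acts by $1\boxtimes(U_1\wr S^{(1^i)})$. This induced module is $V^{((n-i),(1^i))}$ by definition.

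Your determinant check is a good sanity test. For $r=2$, though, the convention issues of Remark~\ref{rmk:conventions} are moot, since $\zeta=\zeta^{-1}=-1$ and $V\cong V^*$. One minor point: you use $w$ both for the grading variable and for the underlying permutation.
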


It is easy to establish the following surjection.
\begin{lemma}\label{lem:tensor-surjection}
Fix a finite group $G \subseteq \GL_n$. 
There is a surjection of trigraded $G$-modules 
\begin{equation} 
R^{(1,1)}_G \otimes R^{(0,1)}_G \twoheadrightarrow R^{(1,2)}_G. 
\end{equation}
\end{lemma}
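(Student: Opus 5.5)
The surjection will come from the multiplication map of the ambient superalgebras. Write $A := \C[\bm{x},\bm{\theta}^{(1)}]$ and $B := \C[\bm{\theta}^{(2)}]$, so that $R^{(1,1)}_G = A/I^{(1,1)}_G$ and $R^{(0,1)}_G = B/I^{(0,1)}_G$. Let $C := \C[\bm{x},\bm{\theta}^{(1)},\bm{\theta}^{(2)}]$, so that $R^{(1,2)}_G = C/I^{(1,2)}_G$. The plan is to show that the linear map $\mu\colon A\otimes B\to C$, $f\otimes g\mapsto fg$, is a $G$-equivariant isomorphism of trigraded vector spaces, and that it maps $I^{(1,1)}_G\otimes B + A\otimes I^{(0,1)}_G$ into $I^{(1,2)}_G$. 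It then descends to a map $R^{(1,1)}_G\otimes R^{(0,1)}_G \to R^{(1,2)}_G$, which is surjective because $\mu$ is.

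For the first point, recall that as a superalgebra $C \cong A \otimes B$, the graded-commutative tensor product. The monomials $\bm{x}^{a}\theta^{(1)}_{J}\theta^{(2)}_{K}$ form a basis of $C$, and they are exactly the images of the products of basis monomials of $A$ and $B$. Hence $\mu$ is a linear bijection. It preserves the trigrading by ($\bm{x}$-degree, $\bm{\theta}^{(1)}$-degree, $\bm{\theta}^{(2)}$-degree). It is $G$-equivariant because $G$ acts on $C$ by algebra automorphisms that restrict to the given actions on the subalgebras $A$ and $B$, so $g\cdot(fh)=(g\cdot f)(g\cdot h)$.

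For the second point, take a diagonal $G$-invariant $p\in A$ with no constant term. Viewed inside $C$, it is still a diagonal $G$-invariant with no constant term, so $p\in I^{(1,2)}_G$. Therefore $\mu(pa\otimes b)=pab\in I^{(1,2)}_G$, since $I^{(1,2)}_G$ is a two-sided ideal. The same argument applies to invariants of $B$ with no constant term, using that $I^{(1,2)}_G$ is closed under left and right multiplication. Signs from anticommuting $\theta$'s are harmless, since they only change elements by $\pm1$.

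Combining the two points gives a well-defined, surjective, trigraded, $G$-equivariant map $R^{(1,1)}_G\otimes R^{(0,1)}_G\twoheadrightarrow R^{(1,2)}_G$. I expect no genuine obstacle here. The only step needing care is confirming that invariants of the smaller rings remain invariants in $C$, which is immediate because the $G$-action on $C$ restricts to the action on each subring.
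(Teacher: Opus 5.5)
Your proof is correct and is essentially the paper's argument. The paper identifies $R^{(1,1)}_G\otimes R^{(0,1)}_G$ with the quotient of $\C[\bm{x},\bm{\theta}^{(1)},\bm{\theta}^{(2)}]$ by the ideal generated by $I^{(1,1)}_G$ and $I^{(0,1)}_G$, which your multiplication isomorphism $\mu$ makes explicit, and then notes that this ideal lies in $I^{(1,2)}_G$ because its generators are diagonal invariants with no constant term.
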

\begin{proof} The left-hand side is the quotient of $\C[\bm{x},\bm{\theta}^{(1)},\bm{\theta}^{(2)}]$ by the ideal generated by $I^{(1,1)}_G$, in the variables $\bm{x},\bm{\theta}^{(1)}$, and $I^{(0,1)}_G$, in the variables $\bm{\theta}^{(2)}$. Both are generated by diagonal $G$-invariants with no constant term, so this ideal is contained in $I^{(1,2)}_G$. \end{proof}

We will establish an upper bound on the trigraded multiplicity of the determinant character of $R_{\mathfrak{B}_n}^{(1,2)}$,
by computing the multiplicity of the determinant character in $R_{\mathfrak{B}_n}^{(1,1)} \otimes
	R_{\mathfrak{B}_n}^{(0,1)}$. The method is similar to that first used in \cite{HaglundSergel}, and subsequently in
\cite{lentfer2026signcharactertriagonalfermionic, GonzalezLentferMularczyk} (all in type $A$) and in
\cite{JiangLentfer} (in type $B$).
\begin{proposition}\label{prop:B(1,1)(0,1)}
	For $n \geq 1$, we have
	\begin{equation}
		\left\langle \grFrob(R_{\mathfrak{B}_n}^{(1,1)} \otimes R_{\mathfrak{B}_n}^{(0,1)};q;z, w), s_{(\varnothing,(1^{n}))}(\x) \right\rangle = \sum_{d=0}^n w^d \qbinom{n}{d}_{q^2} \prod_{j=1}^{n-d} (z+q^{2j-1}).
	\end{equation}
	In particular, at $q=z=w=1$, this evaluates to $3^n$.
\end{proposition}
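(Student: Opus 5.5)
The plan is to compute the multiplicity of the determinant character $V^{(\varnothing,(1^n))}$ in the tensor product $R_{\mathfrak{B}_n}^{(1,1)} \otimes R_{\mathfrak{B}_n}^{(0,1)}$. I would use the standard fact that for irreducibles $V^{\llambda}$, $V^{\mmu}$ of a finite group, the multiplicity of a one-dimensional character $\chi$ in $V^{\llambda}\otimes V^{\mmu}$ is $1$ if $V^{\mmu}\cong (V^{\llambda})^*\otimes\chi$ and $0$ otherwise. For $\mathfrak{B}_n$ all representations are self-dual (real characters), so the determinant appears in $V^{\llambda}\otimes V^{\mmu}$ exactly when $V^{\mmu}\cong V^{\llambda}\otimes\det$, with multiplicity one.

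The first step is to identify tensoring with $\det$ on the indexing $2$-partitions. I expect $V^{(\lambda^{(0)},\lambda^{(1)})}\otimes\det \cong V^{(\lambda^{(1)\prime},\lambda^{(0)\prime})}$. To see this, note that tensoring $U_c\wr S^{\mu}$ by the restriction of $\det=\sign\cdot\chi$ to $\Z_2\wr\symm_m$ gives $U_{c+1}\wr S^{\mu'}$. Since induction commutes with tensoring by a restricted character, the formula follows from the definition of $V^{\llambda}$ in Section~\ref{ss:rep_wr}. Consistently, $V^{((n),\varnothing)}\otimes\det=V^{(\varnothing,(1^n))}$.

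Next, by Proposition~\ref{prop:one-set}, $R^{(0,1)}_{\mathfrak{B}_n}$ has $w$-degree $i$ component $V^{((n-i),(1^i))}$. Tensoring with $\det$ gives $V^{((i),(1^{n-i}))}$. Hence the trigraded determinant multiplicity equals
\begin{equation*}
\sum_{i=0}^n w^i \left\langle \grFrob(R^{(1,1)}_{\mathfrak{B}_n};q;z), s_{((i),(1^{n-i}))}\right\rangle .
\end{equation*}
I would be careful here that the grading is multiplicative: the bigraded piece of $R^{(1,1)}$ tensored with the $w^i$ piece contributes to degree $q^az^bw^i$. Substituting Proposition~\ref{prop:type-B-hooks-11} with $d=i$ then yields the stated formula.

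For the evaluation at $q=z=w=1$, the sum becomes $\sum_d \binom{n}{d}2^{n-d}=3^n$ by the binomial theorem.

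The main obstacle is the careful identification of $V^{\llambda}\otimes\det$ under the conventions of this paper, specifically which component receives the conjugate and the swap $U_0\leftrightarrow U_1$. I would double-check it against the stated facts that the trivial representation is $V^{((n),\varnothing)}$ and the determinant is $V^{(\varnothing,(1^n))}$.
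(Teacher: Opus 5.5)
Your proposal is correct and follows the same route as the paper: you pair the $w^i$ piece $V^{((n-i),(1^i))}$ of $R^{(0,1)}_{\mathfrak{B}_n}$ with its determinant-twisted partner $V^{((i),(1^{n-i}))}$, then read off the multiplicity from Proposition~\ref{prop:type-B-hooks-11}. The only difference is that you derive the needed rule $V^{(\lambda^{(0)},\lambda^{(1)})}\otimes\det\cong V^{(\lambda^{(1)\prime},\lambda^{(0)\prime})}$ and the self-duality of $\mathfrak{B}_n$-modules yourself, whereas the paper cites these from \cite[Lemmas~5.3 and~5.4]{JiangLentfer}.
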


\begin{proof}
	By \cite[Lemma 5.3]{JiangLentfer} and \cite[Lemma 5.4]{JiangLentfer}, we have
	\begin{equation}
		\begin{aligned}
			\langle & \grFrob(R_{\mathfrak{B}_n}^{(1,1)} \otimes R_{\mathfrak{B}_n}^{(0,1)}; q;z,w), s_{(\varnothing,(1^n))}(\x)\rangle                                                                                                                                \\
			        & = \sum_{(\lambda^{(0)},\lambda^{(1)}) \vdash_2 n} \langle \grFrob(R_{\mathfrak{B}_n}^{(1,1)}; q;z), s_{(\lambda^{(0)},\lambda^{(1)})}(\x) \rangle \langle \grFrob(R_{\mathfrak{B}_n}^{(0,1)}; w), s_{(\lambda^{(1)'},\lambda^{(0)'})}(\x)\rangle \\
			        & = \sum_{d=0}^n w^d \langle \grFrob(R_{\mathfrak{B}_n}^{(1,1)}; q;z), s_{((d),(1^{n-d}))} (\x)\rangle                                                                                                                                             \\
			        & = \sum_{d=0}^n w^d\qbinom{n}{d}_{q^2} \prod_{j=1}^{n-d} (z+q^{2j-1}),
		\end{aligned}
	\end{equation}
	where we applied Proposition~\ref{prop:type-B-hooks-11} and Proposition~\ref{prop:one-set}.
	Finally, the specialization at $q=z=w=1$ follows easily by using the binomial theorem.
\end{proof}

For two polynomials $P, Q \in \Z_{\geq 0}[q,z,w]$, let $P \leq Q$ mean that $Q - P \in \Z_{\geq 0}[q,z,w]$. Then for
symmetric functions, let $f \leq g$ mean that $[s_{\llambda}]f \leq [s_{\llambda}]g$ for all $\llambda$.

\begin{corollary}\label{cor:B12}
	For $n \geq 1$, we have
	\begin{equation}
		\left\langle \grFrob(R^{(1,2)}_{\mathfrak{B}_n};q;z, w), s_{(\varnothing,(1^{n}))}(\x) \right\rangle \leq \sum_{d=0}^n w^d \qbinom{n}{d}_{q^2} \prod_{j=1}^{n-d} (z+q^{2j-1}).
	\end{equation}
	In particular, at $q=z=w=1$, the right-hand side is $3^n$.
\end{corollary}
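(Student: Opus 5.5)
The plan is to combine the surjection of Lemma~\ref{lem:tensor-surjection} with the computation of Proposition~\ref{prop:B(1,1)(0,1)}. Taking $G=\mathfrak{B}_n$, Lemma~\ref{lem:tensor-surjection} gives a surjection of trigraded $\mathfrak{B}_n$-modules
\begin{equation*}
R^{(1,1)}_{\mathfrak{B}_n}\otimes R^{(0,1)}_{\mathfrak{B}_n}\twoheadrightarrow R^{(1,2)}_{\mathfrak{B}_n}.
\end{equation*}
Here the $x$-degree is tracked by $q$, the $\bm\theta^{(1)}$-degree by $z$, and the $\bm\theta^{(2)}$-degree by $w$. The map is a quotient by a homogeneous ideal, so it respects the trigrading and is equivariant.

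First, fix a tridegree $(a,b,c)$. The restricted map on that component is a surjection of finite-dimensional $\mathfrak{B}_n$-modules. Since $\mathfrak{B}_n$ is finite and we work over $\C$, Maschke's theorem makes the target a direct summand of the source. Therefore the multiplicity of every irreducible $V^{\llambda}$ in $(R^{(1,2)}_{\mathfrak{B}_n})_{a,b,c}$ is at most its multiplicity in the $(a,b,c)$-component of the tensor product. Summing over tridegrees gives the coefficientwise inequality $\grFrob(R^{(1,2)}_{\mathfrak{B}_n})\leq\grFrob(R^{(1,1)}_{\mathfrak{B}_n}\otimes R^{(0,1)}_{\mathfrak{B}_n})$ in the sense defined just before the corollary.

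Next, I would take the coefficient of $s_{(\varnothing,(1^n))}$, the determinant character, on both sides. Proposition~\ref{prop:B(1,1)(0,1)} identifies the right-hand coefficient as $\sum_{d=0}^n w^d\qbinom{n}{d}_{q^2}\prod_{j=1}^{n-d}(z+q^{2j-1})$. This yields the stated bound.

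Finally, for the evaluation at $q=z=w=1$: the $q^2$-binomial becomes $\binom nd$ and the product becomes $2^{n-d}$. The binomial theorem then gives $\sum_d\binom nd 2^{n-d}=3^n$.

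The only point needing any care is that the surjection is compatible with the trigrading and the group action simultaneously. This holds because the ideal in the proof of Lemma~\ref{lem:tensor-surjection} is generated by homogeneous invariants. There is no real obstacle beyond citing complete reducibility.
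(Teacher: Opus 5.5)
Your proposal is correct and follows the paper's proof exactly. The surjection of Lemma~\ref{lem:tensor-surjection} gives the coefficientwise inequality of Frobenius series; extracting the coefficient of $s_{(\varnothing,(1^n))}$ via Proposition~\ref{prop:B(1,1)(0,1)} then gives the bound, and the $3^n$ evaluation follows from the binomial theorem. Your appeal to Maschke's theorem on each tridegree component just spells out a step the paper leaves implicit.
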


\begin{proof}
	The surjection of Lemma~\ref{lem:tensor-surjection} gives
	\begin{equation}
		\grFrob(R_{\mathfrak{B}_n}^{(1,2)}; q;z,w) \leq \grFrob(R_{\mathfrak{B}_n}^{(1,1)} \otimes R_{\mathfrak{B}_n}^{(0,1)}; q;z,w).
	\end{equation}
    Extracting the coefficient of $s_{(\varnothing,(1^n))}(\x)$ and applying Proposition~\ref{prop:B(1,1)(0,1)} gives the result.
\end{proof}

\subsection{Hook components in type \texorpdfstring{$A$}{A}}\label{subsec:A-hook-components}
The same proof methods allow us to obtain the analogous results in type $A$.

\begin{proposition}\label{prop:type-A-hooks-11}
	For $n \geq 1$, and $0 \leq d \leq n-1$, we have
	\begin{align}
		\left\langle \grFrob(R^{(1,1)}_{\mathfrak{S}_n};q;z), s_{(d+1, 1^{n-d-1})}(\x) \right\rangle & = \qbinom{n-1}{d}_q \prod_{j=1}^{n-1-d} (z+q^j)                                      \\
		& =  \sum_{i=0}^{n-1} z^{i} q^{\binom{n-i-d}{2}}\qbinom{n-1-d}{i}_q \qbinom{n-1}{d}_q.
	\end{align}
\end{proposition}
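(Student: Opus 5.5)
We follow the proof of Proposition~\ref{prop:type-B-hooks-11}, now at $r=1$. At $r=1$ Theorem~\ref{thm:main-wreath} specializes to the first equality of equation~\eqref{eq:type-A-product-form}. In this case $\fmaj=\maj$, $0$ is never a flag descent (since $1\in Q^{(0)}$ always), and $\fdes=\des$. Pairing with $s_{(d+1,1^{n-d-1})}$ therefore gives
\begin{equation*}
\sum_{T\in\SYT(d+1,1^{n-d-1})} q^{\maj(T)}\prod_{j=1}^{\des(T)}\left(1+\frac{z}{q^{j}}\right).
\end{equation*}

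A standard tableau of hook shape $(d+1,1^{n-d-1})$ has $1$ in the corner. It is determined by the set $D\subseteq\{1,\ldots,n-1\}$ of entries $i$ such that $i+1$ lies in the leg. Then $\Des(T)=D$ and $|D|=n-1-d$, and every such $D$ occurs exactly once. So $\des(T)=n-1-d$ is constant, and the product factors out of the sum. What remains is
\begin{equation*}
\sum_{\substack{D\subseteq[n-1]\\|D|=n-1-d}} q^{\|D\|}=q^{\binom{n-d}{2}}\qbinom{n-1}{n-1-d}_q,
\end{equation*}
which is the standard $q$-binomial subset-sum identity. Multiplying by $\prod_{j=1}^{n-1-d}(1+zq^{-j})$ absorbs $q^{\binom{n-d}{2}}=q^{1+2+\cdots+(n-1-d)}$ and yields $\qbinom{n-1}{d}_q\prod_{j=1}^{n-1-d}(z+q^j)$. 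This is the first equality.

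For the second equality, expand $\prod_{j=1}^{m}(z+q^j)$ with $m=n-1-d$ by the $q$-binomial theorem:
\begin{equation*}
\prod_{j=1}^m(1+zq^{j-1})=\sum_i q^{\binom{i}{2}}\qbinom{m}{i}_q z^i.
\end{equation*}
This gives
\begin{equation*}
\prod_{j=1}^m(z+q^j)=q^{\binom{m+1}{2}}\prod_{j=1}^m(1+zq^{-j}),
\end{equation*}
and the $z^i$ coefficient is the subset sum over complements, namely $q^{\binom{m-i+1}{2}}\qbinom{m}{i}_q=q^{\binom{n-i-d}{2}}\qbinom{n-1-d}{i}_q$. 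Terms with $i>m$ vanish, so the sum may be extended to $i\le n-1$.

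No step is a genuine obstacle. The only care needed is the exponent bookkeeping in the two $q$-binomial identities: the subset-sum identity and the exponent shift $q^{\binom{m+1}{2}}$ in the expansion of $\prod_{j}(z+q^j)$.
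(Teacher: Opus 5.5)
Your proposal is correct and follows the paper's proof essentially step for step. The one difference is the starting point: the paper begins directly from the Murai--Rhoades--Wilson formula in equation~\eqref{eq:type-A-product-form}, whereas you specialize Theorem~\ref{thm:main-wreath} at $r=1$, which gives the same formula. From there you use the same bijection between hook tableaux and descent sets $D$ with $|D|=n-1-d$, the same subset-sum identity, and the $q$-binomial theorem, and your exponent bookkeeping is right (the $z^i$ coefficient is $e_{m-i}(q,\ldots,q^m)=q^{\binom{m-i+1}{2}}\qbinom{m}{i}_q$).
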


\begin{proof}
	By equation~\eqref{eq:type-A-product-form}, we write
	\begin{equation}\begin{aligned}
		 & \left\langle \grFrob(R^{(1,1)}_{\mathfrak{S}_n};q;z), s_{(d+1,1^{n-d-1})}(\x) \right\rangle                                                                     \\
		 & = \left\langle \sum_{T \in \SYT_n} q^{\maj(T)} \prod_{j=1}^{\des(T)} \left(1+\frac{z}{q^{j}}\right)\, s_{\shape(T)}(\x),  s_{(d+1,1^{n-d-1})}(\x) \right\rangle \\
		 & = \sum_{T\in \SYT_n(d+1,1^{n-d-1})}q^{\maj(T)}\prod_{j=1}^{\des(T)}\left(1+\frac{z}{q^{j}}\right).
	\end{aligned}\end{equation}
	Now for any $D \subseteq \{1,\ldots,n-1\}$ with $|D| = n-1-d$, there exists a unique standard Young tableau of shape $(d+1,1^{n-d-1})$ with descent set $D$.
	Let $||D||$ denote the sum of the elements of $D$; then $\maj(T) = ||D||$ and $\des(T) = n-1-d$.
	Hence we continue, writing that the previous equation is equal to
	\begin{equation}\label{eq:to-factor-A}
		\sum_{\substack{D \subseteq \{1,\ldots,n-1\}\\|D|=n-1-d}}q^{||D||}\prod_{j=1}^{n-1-d}\left(1+\frac{z}{q^{j}}\right).
	\end{equation}
	One can check that
	\begin{equation}
		\sum_{\substack{D \subseteq \{1,\ldots,n-1\}\\|D|=n-1-d}}q^{||D||} = q^{\binom{n-d}{2}}\qbinom{n-1}{d}_{q}.
	\end{equation}
	Hence equation~\eqref{eq:to-factor-A} can be further simplified to
	\begin{equation}\begin{aligned}
		\left(\prod_{j=1}^{n-1-d}\left(1+\frac{z}{q^{j}}\right)\right) q^{\binom{n-d}{2}}\qbinom{n-1}{d}_{q} & = \left(\prod_{j=1}^{n-1-d}\left(q^j+z\right)\right) \qbinom{n-1}{d}_{q}             \\
		& = \sum_{i=0}^{n-1} z^i q^{\binom{n-i-d}{2}} \qbinom{n-1-d}{i}_q \qbinom{n-1}{d}_{q},
	\end{aligned}\end{equation}
	where the last line follows by the $q$-binomial theorem.
\end{proof}

\begin{remark}
	Using the Theta operators of \cite{DadderioIraciVandenWyngaerd2021} and further results of \cite{IraciNadeauVandenWyngaerd2024}, Proposition~\ref{prop:type-A-hooks-11} can also be deduced from equation~\eqref{eq:type-A-product-form} and the $\ell=0$ case of \cite[Theorem 8.4]{Lentfer2024} (which more generally contains a conjectural formula for the trigraded multiplicity of the hook characters of $R_{\mathfrak{S}_n}^{(1,2)}$).
\end{remark}

We recall the following.
\begin{proposition}[{\cite[Lemma 4.10]{HaglundSergel}}]\label{prop:one-set-A}
	For $n \geq 1$,
	\begin{equation}
		\grFrob(R_{\mathfrak{S}_n}^{(0,1)}; w) = \sum_{i=0}^{n-1} w^i s_{(n-i,1^i)}(\x).
	\end{equation}
\end{proposition}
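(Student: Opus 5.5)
The plan is to compute $R^{(0,1)}_{\mathfrak{S}_n}$ explicitly as an exterior algebra and then identify its graded pieces by a Pieri-rule induction.

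First I determine the defining ideal. By Proposition~\ref{prop:Weyl-wreath-bf} with $r=1$, $k=0$, $j=1$, the ideal $I^{(0,1)}_{\mathfrak{S}_n}$ is generated by the polarized power sums $p_{\varnothing;\mathbf{s}}$ with $\mathbf{s}\in\{0,1\}$ and $1\le |\mathbf{s}|\le n$. The only one is $p_{\varnothing;(1)}=\theta_1+\cdots+\theta_n$. Hence
\[
R^{(0,1)}_{\mathfrak{S}_n}=\bigwedge(V^*)\big/\bigl(e\wedge \textstyle\bigwedge(V^*)\bigr),
\]
where $e=\theta_1+\cdots+\theta_n$ spans the trivial summand of $V^*\cong\C^n$, the permutation representation.

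Next I identify this quotient. The natural $\mathfrak{S}_n$-equivariant surjection $\bigwedge(V^*)\to\bigwedge(V^*/\C e)$ has kernel exactly the ideal generated by $e$. To see this, pick a basis $e,v_2,\ldots,v_n$ of $V^*$; the wedge monomials not involving $e$ map to a basis of the target, and those involving $e$ span the ideal. Therefore the degree-$i$ piece of $R^{(0,1)}_{\mathfrak{S}_n}$ is $\bigwedge^i W$, where $W=V^*/\C e\cong S^{(n-1,1)}$ is the reflection representation. In particular it vanishes for $i\ge n$, which explains why the sum stops at $i=n-1$.

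It remains to show $\Frob(\bigwedge^i W)=s_{(n-i,1^i)}$, and this is the only step requiring real input. Since $V^*\cong\mathbf{1}\oplus W$, we get $\bigwedge^i V^*\cong\bigwedge^i W\oplus\bigwedge^{i-1}W$. The module $\bigwedge^i V^*$ is induced from the character $\mathrm{triv}\otimes\sign$ of $\mathfrak{S}_{n-i}\times\mathfrak{S}_i$, so its Frobenius characteristic is $h_{n-i}e_i$. By the Pieri rule,
\[
h_{n-i}e_i=s_{(n-i+1,1^{i-1})}+s_{(n-i,1^i)}
\]
for $1\le i\le n-1$, and $h_ne_0=s_{(n)}$. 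Inducting on $i$ starting from $\bigwedge^0W=\mathbf{1}$ and subtracting $\Frob(\bigwedge^{i-1}W)=s_{(n-i+1,1^{i-1})}$ gives $\Frob(\bigwedge^iW)=s_{(n-i,1^i)}$. Recording the $\theta$-degree by $w^i$ yields the stated formula.
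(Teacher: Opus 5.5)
Your proof is correct and complete. The paper gives no argument for this statement (it is quoted from Haglund--Sergel), so yours is a self-contained route rather than a reconstruction of the paper's.

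Your three steps are:
\begin{itemize}
\item read off from Proposition~\ref{prop:Weyl-wreath-bf} (with $r=1$, $k=0$, $j=1$) that the ideal is generated by $e=\theta_1+\cdots+\theta_n$;
\item identify $\bigwedge(V^*)/(e)\cong\bigwedge(V^*/\C e)$;
\item extract $\Frob(\bigwedge^iW)=s_{(n-i,1^i)}$ from $h_{n-i}e_i$ by Pieri and induction on $i$.
\end{itemize}
This gives more than the citation does. It describes $R^{(0,1)}_{\mathfrak{S}_n}$ at the ring level as the exterior algebra of the reflection representation, which explains why the series stops at $w^{n-1}$. It also uses only tools already in the paper plus the Pieri rule.

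You could even skip Proposition~\ref{prop:Weyl-wreath-bf}. Your Pieri computation already shows that $s_{(n)}$ occurs in $\bigwedge^iV^*$ only for $i\in\{0,1\}$, each time with multiplicity one, so $e$ spans all positive-degree invariants.

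The same template also proves the type $B$ companion, Proposition~\ref{prop:one-set}, which the paper likewise only cites. For $r=2$ and a single set of fermionic variables, the generating set of Proposition~\ref{prop:Weyl-wreath-bf} is empty, since $|\mathbf{s}|\le 1$ is never a positive multiple of $2$. Hence $R^{(0,1)}_{\mathfrak{B}_n}=\bigwedge V^*$. Moreover $\bigwedge^iV^*\cong\Ind_{\mathfrak{B}_{n-i}\times\mathfrak{B}_i}^{\mathfrak{B}_n}(\mathrm{triv}\boxtimes\det)$, whose Frobenius image is $h_{n-i}(\x^{(0)})\,e_i(\x^{(1)})=s_{((n-i),(1^i))}$.

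One cosmetic point: at $n=1$ you have $W=0$, so $S^{(n-1,1)}$ should be read as the zero module. The formula still holds there.
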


We establish an upper bound on the trigraded multiplicity of the sign character of $R_{\mathfrak{S}_n}^{(1,2)}$, by
first establishing it for $R_{\mathfrak{S}_n}^{(1,1)} \otimes R_{\mathfrak{S}_n}^{(0,1)}$. The ungraded value of
$3^{n-1}$ was conjectured by F. Bergeron \cite{Bergeron2020}.
\begin{proposition}\label{prop:A(1,1)(0,1)}
	For $n \geq 1$, we have
	\begin{equation}
		\left\langle \grFrob(R^{(1,1)}_{\mathfrak{S}_n} \otimes R_{\mathfrak{S}_n}^{(0,1)};q;z, w), s_{(1^{n})}(\x) \right\rangle = \sum_{i,d=0}^{n-1}z^i w^d q^{\binom{n-i-d}{2}} \qbinom{n-1}{d}_q\qbinom{n-1-d}{i}_q.
	\end{equation}
	In particular, at $q=z=w=1$, this evaluates to $3^{n-1}$.
\end{proposition}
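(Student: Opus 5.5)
The plan follows the proof of Proposition~\ref{prop:B(1,1)(0,1)} closely, with type $A$ inputs in place of type $B$ ones. The first step is the standard type $A$ tensor-product identity for the sign multiplicity. For $\mathfrak{S}_n$-modules $V,W$ we have $\langle \Frob(V\otimes W), s_{(1^n)}\rangle = \sum_{\lambda\vdash n}\langle \Frob(V),s_\lambda\rangle\langle \Frob(W),s_{\lambda'}\rangle$. This holds because $\mathrm{sign}\otimes S^{\lambda}\cong S^{\lambda'}$ and the irreducible characters are real, so $S^\lambda\otimes S^\mu$ contains the sign representation exactly once if $\mu=\lambda'$ and not at all otherwise. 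This is the type $A$ analogue of \cite[Lemmas 5.3, 5.4]{JiangLentfer} and is the standard fact used in \cite{HaglundSergel}. It extends to the trigraded setting coefficientwise in $q,z,w$.

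Next I apply Proposition~\ref{prop:one-set-A}. The only Schur functions occurring in $\grFrob(R^{(0,1)}_{\mathfrak{S}_n};w)$ are the hooks $s_{(n-d,1^d)}$, each with coefficient $w^d$. The conjugate of $(n-d,1^d)$ is $(d+1,1^{n-d-1})$. Therefore the sum over $\lambda$ collapses to
\[
\sum_{d=0}^{n-1} w^d \left\langle \grFrob(R^{(1,1)}_{\mathfrak{S}_n};q;z), s_{(d+1,1^{n-d-1})}\right\rangle .
\]
I then substitute the second expression in Proposition~\ref{prop:type-A-hooks-11}. This gives $\sum_{d}\sum_i z^iw^dq^{\binom{n-i-d}{2}}\qbinom{n-1}{d}_q\qbinom{n-1-d}{i}_q$, where terms with $i>n-1-d$ vanish because the $q$-binomial is zero there. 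Hence $i$ may be taken to range over $0,\dots,n-1$, which is exactly the claimed formula.

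For the specialization, I use the first (product) form of Proposition~\ref{prop:type-A-hooks-11} at $q=z=1$. There the hook multiplicity becomes $\binom{n-1}{d}2^{n-1-d}$. Summing over $d$ with $w=1$ and applying the binomial theorem gives $(1+2)^{n-1}=3^{n-1}$.

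The only step that needs care is the first one. There I must check that the sign-pairing identity holds with the correct conjugation convention and that it is compatible with the multigrading, since the tensor product is graded by the sum of degrees in separate variable sets. Once that is verified, the rest is direct substitution.
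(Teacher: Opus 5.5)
Your proposal is correct and follows the paper's proof: the paper obtains the sign-pairing identity by citing Bessenrodt and the fact that the Kronecker coefficient $g(\lambda,\mu,(1^n))$ is $1$ exactly when $\lambda=\mu'$, which is the same fact as your character argument. It then collapses the sum to hooks via Proposition~\ref{prop:one-set-A}, substitutes the second expression of Proposition~\ref{prop:type-A-hooks-11}, and gets $3^{n-1}$ from the binomial theorem, just as you do.
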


\begin{proof}
	By \cite{Bessenrodt} and the fact that the Kronecker coefficient $g(\lambda,\mu,(1^n))$ is $1$ if $\lambda = \mu'$ and $0$ otherwise, we have
	\begin{equation}
		\begin{aligned}
			\langle & \grFrob(R_{\mathfrak{S}_n}^{(1,1)} \otimes R_{\mathfrak{S}_n}^{(0,1)}; q;z,w), s_{(1^n)}(\x)\rangle                                                                         \\
			        & = \sum_{\lambda \vdash n} \langle \grFrob(R_{\mathfrak{S}_n}^{(1,1)}; q;z), s_{\lambda}(\x) \rangle \langle \grFrob(R_{\mathfrak{S}_n}^{(0,1)}; w), s_{\lambda'}(\x)\rangle \\
			        & = \sum_{d=0}^{n-1} w^d \langle \grFrob(R_{\mathfrak{S}_n}^{(1,1)}; q;z), s_{(d+1,1^{n-d-1})} (\x)\rangle                                                                    \\
			        & = \sum_{d=0}^{n-1} w^d\sum_{i=0}^{n-1} z^{i} q^{\binom{n-i-d}{2}}\qbinom{n-1-d}{i}_q \qbinom{n-1}{d}_q,
		\end{aligned}
	\end{equation}
	where we applied Proposition~\ref{prop:type-A-hooks-11} and Proposition~\ref{prop:one-set-A}.
	Finally, the specialization at $q=z=w=1$ follows easily by using the binomial theorem.
\end{proof}

\begin{corollary}\label{cor:A12}
	For $n \geq 1$, we have
	\begin{equation}
		\left\langle \grFrob(R^{(1,2)}_{\mathfrak{S}_n};q;z, w), s_{(1^{n})}(\x) \right\rangle \leq \sum_{i,d=0}^{n-1}z^i w^d q^{\binom{n-i-d}{2}} \qbinom{n-1}{d}_q\qbinom{n-1-d}{i}_q.
	\end{equation}
	In particular, at $q=z=w=1$, the right-hand side is $3^{n-1}$.
\end{corollary}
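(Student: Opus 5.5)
This follows the proof of Corollary~\ref{cor:B12} step for step, replacing type $B$ ingredients with their type $A$ analogues. First we apply Lemma~\ref{lem:tensor-surjection} with $G=\mathfrak{S}_n$. It gives a surjection of trigraded $\mathfrak{S}_n$-modules $R^{(1,1)}_{\mathfrak{S}_n}\otimes R^{(0,1)}_{\mathfrak{S}_n}\twoheadrightarrow R^{(1,2)}_{\mathfrak{S}_n}$, where the grading tracks $\bm{x}$-degree by $q$, $\bm{\theta}^{(1)}$-degree by $z$ and $\bm{\theta}^{(2)}$-degree by $w$.

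Representations of a finite group over $\C$ are completely reducible. Hence each trigraded piece of $R^{(1,2)}_{\mathfrak{S}_n}$ is isomorphic to a direct summand of the corresponding trigraded piece of the tensor product. Its multiplicity of every irreducible character is therefore coefficientwise at most that of the source. This gives
\begin{equation*}
\grFrob(R^{(1,2)}_{\mathfrak{S}_n};q;z,w)\leq \grFrob(R^{(1,1)}_{\mathfrak{S}_n}\otimes R^{(0,1)}_{\mathfrak{S}_n};q;z,w)
\end{equation*}
in the order on symmetric functions defined before Corollary~\ref{cor:B12}.

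Next we extract the coefficient of $s_{(1^n)}$ on both sides. The right-hand side is computed exactly by Proposition~\ref{prop:A(1,1)(0,1)}, which yields the stated upper bound.

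For the specialization, set $q=z=w=1$ in the right-hand side. The Gaussian binomials become ordinary binomials, so the sum becomes
\begin{equation*}
\sum_{d,i}\binom{n-1}{d}\binom{n-1-d}{i}.
\end{equation*}
This counts the ways to assign each of $n-1$ objects one of three labels, so by the binomial (trinomial) theorem it equals $3^{n-1}$. This value is also recorded in Proposition~\ref{prop:A(1,1)(0,1)}.

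No step is a real obstacle. The only point needing care is justifying that a graded surjection yields a coefficientwise inequality of Schur multiplicities in each trigraded component. This follows from complete reducibility and from the surjection preserving the trigrading and commuting with the $\mathfrak{S}_n$-action.
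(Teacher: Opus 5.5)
Your proposal is correct and matches the paper's proof: the surjection of Lemma~\ref{lem:tensor-surjection} gives the coefficientwise inequality of trigraded Frobenius series, and extracting the $s_{(1^n)}$ coefficient via Proposition~\ref{prop:A(1,1)(0,1)} yields the bound. Your complete-reducibility justification and your trinomial count giving $3^{n-1}$ simply spell out details the paper leaves implicit.
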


\begin{proof}
	The surjection of Lemma~\ref{lem:tensor-surjection} gives
	\begin{equation}
		\grFrob(R_{\mathfrak{S}_n}^{(1,2)}; q;z,w) \leq \grFrob(R_{\mathfrak{S}_n}^{(1,1)} \otimes R_{\mathfrak{S}_n}^{(0,1)}; q;z,w).
	\end{equation}
    Extracting the coefficient of $s_{(1^n)}(\x)$ and applying Proposition~\ref{prop:A(1,1)(0,1)} gives the result.
\end{proof}

\subsection{A proof of a conjecture of Bergeron on hook components}\label{subsec:Bergeron}
In this section, we record a short proof of a conjecture of Bergeron \cite{BergeronOPAC} on hook components of
$R_{\mathfrak{S}_n}^{(1,1)}$, which follows from the existing literature.

Define a \emph{super Schur function} (see \cite{BereleRegev,ChengWangBook}) by
\begin{equation}\label{eq:superschur}
	s_\lambda(\mathbf{q}/\mathbf{z}) = \sum_{\nu \subseteq \lambda} s_\nu(\mathbf{q}) s_{\lambda'/\nu'}(\mathbf{z}).
\end{equation}

We will use the following result, which was originally conjectured by Bergeron {\cite[Conjecture 1]{Bergeron2020}}.
Define $P(k,j,n)$ to be the set of all partitions of length $\ell(\lambda) \leq n$ such that $\lambda_{k+1} \leq j$.
\begin{theorem}[{\cite[Corollary 1.3]{Lentfer-Supersymmetry}}]\label{thm:DSS}
	Fix a positive integer $n$.
	For partitions $\lambda$ with $\ell(\lambda) \leq n$, and $\mu \vdash n$, there exist nonnegative integer coefficients $c_{\lambda,\mu}$ such that for any $(k,j)$, the multigraded Frobenius series of $R_{\mathfrak{S}_n}^{(k,j)}$ is
	\begin{equation}\label{eq:theorem}
		\grFrob(R_{\mathfrak{S}_n}^{(k,j)}; \mathbf{q};\mathbf{z}) = \sum_{\lambda \in P(k,j,n)} \sum_{\mu \vdash n} c_{\lambda,\mu} s_\lambda(\mathbf{q}/\mathbf{z})s_\mu.
	\end{equation}
\end{theorem}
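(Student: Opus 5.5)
The plan is to realize all the rings $R^{(k,j)}_{\mathfrak{S}_n}$ at once as the values of a single polynomial functor on super vector spaces, evaluated at $\C^{k|j}$. Then the super Schur functions appear as characters of the irreducible polynomial representations of $\mathfrak{gl}(k|j)$. The coefficients $c_{\lambda,\mu}$ are the $\mathfrak{S}_n$-Frobenius characteristics of the multiplicity spaces of that functor, so they do not depend on $(k,j)$.

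\emph{Step 1: a polynomial superfunctor.} Write $V^*\cong\C^n$ for the dual of the defining representation. For a super vector space $U$, set $\mathcal P(U):=\Sym(U\otimes V^*)$, taken in the super sense, so odd elements anticommute. Then $\C[\bm{x}^{(1)},\ldots,\bm{x}^{(k)},\bm{\theta}^{(1)},\ldots,\bm{\theta}^{(j)}]=\mathcal P(\C^{k|j})$. The functor $\mathcal P$ carries commuting actions of $\mathfrak{S}_n$ (through $V^*$) and of $\mathfrak{gl}(U)$. By super Schur--Weyl/Cauchy duality (Sergeev, Berele--Regev),
\[
\mathcal P(U)\cong\bigoplus_{\lambda}S_\lambda(U)\otimes S_\lambda(V^*).
\]
Here $S_\lambda(V^*)=0$ unless $\ell(\lambda)\le n$, and $S_\lambda(\C^{k|j})\neq 0$ exactly when $\lambda$ lies in the $(k,j)$-hook, that is, $\lambda_{k+1}\le j$. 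The character of $S_\lambda(\C^{k|j})$ under the diagonal torus, which records the multidegree, is $s_\lambda(\mathbf q/\mathbf z)$ as defined in \eqref{eq:superschur}.

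\emph{Step 2: the ideal is a subfunctor.} The invariant subalgebra $U\mapsto\mathcal P(U)^{\mathfrak{S}_n}$ is a subfunctor of $\mathcal P$, because $\mathfrak{S}_n$ acts only on the $V^*$ factor. Its positive-degree part is also a subfunctor, since the functoriality preserves degree. Multiplication in $\mathcal P$ is natural in $U$. Hence $U\mapsto I(U):=\mathcal P(U)\cdot\mathcal P_{+}(U)^{\mathfrak{S}_n}$ is a polynomial subfunctor, and $I(\C^{k|j})=I^{(k,j)}_{\mathfrak{S}_n}$. Therefore $\mathcal R(U):=\mathcal P(U)/I(U)$ is a polynomial functor with an $\mathfrak{S}_n$-action, and $\mathcal R(\C^{k|j})=R^{(k,j)}_{\mathfrak{S}_n}$. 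Polynomial functors are semisimple, so
\[
\mathcal R(U)\cong\bigoplus_\lambda S_\lambda(U)\otimes M_\lambda,
\]
where each $M_\lambda$ is an $\mathfrak{S}_n$-module independent of $U$. By Step 1, $M_\lambda=0$ unless $\ell(\lambda)\le n$, since $M_\lambda$ is a subquotient of $S_\lambda(V^*)$.

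\emph{Step 3: conclusion.} Define $c_{\lambda,\mu}$ as the multiplicity of $S^\mu$ in $M_\lambda$; these are nonnegative integers. Evaluating the decomposition of Step 2 at $U=\C^{k|j}$ and taking the multigraded Frobenius characteristic gives \eqref{eq:theorem}. The summands with $\lambda\notin P(k,j,n)$ vanish because $S_\lambda(\C^{k|j})=0$ for them.

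The main obstacle is the rigorous justification of Step 2. One must check that the super sign conventions make the polarization action compatible with multiplication, and that the polynomial-functor decomposition, which is standard for even vector spaces, transfers to the super setting with the same multiplicity spaces for all $(k,j)$. I would handle this by working in the Schur-algebra/$\mathfrak{S}_d$-bimodule picture, degree by degree. In degree $d$, a polynomial functor is $W\otimes_{\C\mathfrak{S}_d}U^{\otimes d}$ for some $\mathfrak{S}_d$-module $W$, and both $\mathcal P$ and $I$ are given by $\mathfrak{S}_d\times\mathfrak{S}_n$-stable subspaces of $(V^*)^{\otimes d}$. Once this is in place, Sergeev duality for $U^{\otimes d}$ gives the decomposition with $M_\lambda=\operatorname{Hom}_{\mathfrak{S}_d}(S^\lambda,W)$, which manifestly does not depend on $(k,j)$.
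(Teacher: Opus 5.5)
The paper does not prove this statement. It quotes it from \cite[Corollary~1.3]{Lentfer-Supersymmetry}, where the rings $R^{(k,j)}_{\mathfrak S_n}$ are also organized into the direct limit $R^{(\infty,\infty)}_{\mathfrak S_n}$ used later in the section. Once Step~2 is repaired as described below, your argument is a complete, self-contained proof.

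Compared with an approach through that direct system, your route has two advantages:
\begin{enumerate}
\item All the $R^{(k,j)}_{\mathfrak S_n}$ are values of one construction, so independence of $(k,j)$ needs no comparison between different $(k,j)$.
\item The coefficients get an intrinsic meaning: $c_{\lambda,\mu}$ is the multiplicity of $S^\mu$ in $\operatorname{Hom}_{\mathfrak S_d}(S^\lambda,W_d/W'_d)$, with $d=|\lambda|$ and $W_d,W'_d$ as below. In particular, $c_{\lambda,\mu}$ is at most the multiplicity of $S^\mu$ in the restriction of the $\GL_n$-module $S_\lambda(\C^n)$ to $\mathfrak S_n$.
\end{enumerate}

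\textbf{The step that fails as written.} In Step~2 you infer from ``$I$ is a subfunctor'' that $\mathcal R(U)\cong\bigoplus_\lambda S_\lambda(U)\otimes M_\lambda$ with $M_\lambda$ independent of $U$. This inference is not valid. The functoriality you use (algebra maps $\mathcal P(\phi)$ commuting with $\mathfrak S_n$) exists only for \emph{even} linear maps $\phi$. With even morphisms only, super vector spaces form just $\mathrm{Vec}\times\mathrm{Vec}$. Degree-$d$ polynomial functors on that category correspond to $\bigoplus_{a+b=d}\mathfrak S_a\times\mathfrak S_b$-modules, not to $\mathfrak S_d$-modules. For example, $U\mapsto U_{\bar 0}\otimes U_{\bar 0}$ is a subfunctor of $U\mapsto U^{\otimes 2}$. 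Its character $s_2(\mathbf q)+s_{11}(\mathbf q)$ is not a $(k,j)$-independent combination of super Schur functions. So the first clause of your last paragraph is not available as a general principle. What you need is the second clause, and it follows from a direct computation.

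\textbf{The direct computation.} Let $X=U\otimes V^*$, and let $W_d=(V^*)^{\otimes d}$ with the plain place-permutation action. All Koszul signs sit on $U^{\otimes d}$ because $V^*$ is even, so $\mathcal P(U)_d=W_d\otimes_{\C\mathfrak S_d}U^{\otimes d}$. Three facts now combine:
\begin{enumerate}
\item $I(U)_d=\sum_{b\ge 1}\mathcal P(U)_{d-b}\,\mathcal P(U)_b^{\mathfrak S_n}$.
\item $\mathcal P(U)_b^{\mathfrak S_n}$ is the image of $((V^*)^{\otimes b})^{\mathfrak S_n}\otimes U^{\otimes b}$; average over $\mathfrak S_n$, which acts only on the $V^*$ factors.
\item Multiplication is induced by concatenation of tensors.
\end{enumerate}
Hence
\[
I(U)_d=W'_d\otimes_{\C\mathfrak S_d}U^{\otimes d},\qquad W'_d:=\sum_{b\ge 1}\C\mathfrak S_d\cdot\Bigl((V^*)^{\otimes (d-b)}\otimes\bigl((V^*)^{\otimes b}\bigr)^{\mathfrak S_n}\Bigr).
\]
Here $W'_d$ is an $\mathfrak S_d\times\mathfrak S_n$-submodule of $W_d$ that does not depend on $U$. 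Exactness of $-\otimes_{\C\mathfrak S_d}U^{\otimes d}$ in characteristic $0$ gives $\mathcal R(U)_d\cong(W_d/W'_d)\otimes_{\C\mathfrak S_d}U^{\otimes d}$.

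Now insert the Berele--Regev decomposition $U^{\otimes d}\cong\bigoplus_\lambda S^\lambda\otimes S_\lambda(U)$, restricted to the even torus; this finishes the argument as in your Step~3. The vanishing for $\ell(\lambda)>n$ holds because $\operatorname{Hom}_{\mathfrak S_d}(S^\lambda,W_d)\cong S_\lambda(\C^n)$ by classical Schur--Weyl duality.

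In this form, neither odd polarization operators nor the $\mathfrak{gl}(k|j)$-structure is ever used, so the sign check you anticipated disappears. If you prefer to keep the functor language, you must instead prove that $I$ is stable under the odd polarization super-derivations, that is, work with all morphisms and not only even ones.
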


Now at $k=j=1$, equation~\eqref{eq:theorem} becomes
\begin{equation}\label{eq:theorem-11}
	\grFrob(R_{\mathfrak{S}_n}^{(1,1)}; q;z) = \sum_{\lambda \text{ is a hook, } \ell(\lambda) \leq n} \sum_{\mu \vdash n} c_{\lambda,\mu} s_\lambda(q/z)s_\mu.
\end{equation}

Define
\begin{equation}
	\mathbf{c}_\mu(\mathbf{q}/\mathbf{z}):= \langle \grFrob(R_{\mathfrak{S}_n}^{(\infty, \infty)};\mathbf{q};\mathbf{z}), s_\mu \rangle = \sum_{\lambda}c_{\lambda,\mu} s_{\lambda}(\mathbf{q}/\mathbf{z}),
\end{equation}
where $R_{\mathfrak{S}_n}^{(\infty, \infty)}$ is the direct limit of the $R_{\mathfrak{S}_n}^{(k, j)}$ \cite{Lentfer-Supersymmetry}.
\begin{remark}
    Bergeron \cite{BergeronOPAC} originally defined the $\mathbf{c}_\mu(\mathbf{q}/\mathbf{z})$ as the multiplicities in the $\mathfrak{S}_n$-module of multivariate diagonal harmonics $H_{\mathfrak{S}_n}^{(\infty,0)}$, which is isomorphic to the coinvariant ring $R_{\mathfrak{S}_n}^{(\infty,0)}$.
    By Theorem~\ref{thm:DSS}, we can interpret them as we do here.
\end{remark}
With one variable in each set $\mathbf{q}$ and $\mathbf{z}$, the specialization $s_\lambda(q/z)$ is $0$ unless $\lambda$ is a hook shape.
Thus
\begin{equation}\label{eq:c-q-z}
	\mathbf{c}_\mu(q/z)= \sum_{\lambda \text{ is a hook}}c_{\lambda,\mu} s_{\lambda}(q/z) = \langle \grFrob(R_{\mathfrak{S}_n}^{(1, 1)};q;z), s_\mu \rangle,
\end{equation}
by equation~\eqref{eq:theorem-11}.

Now we are ready to prove the following conjecture of Bergeron.
\begin{corollary}[Conjecture 4, \cite{BergeronOPAC}]\label{cor:bergeron-conjecture-4}
	Fix $n \geq 1$. For $0 \leq b \leq n-1$,
	\begin{equation}
		\mathbf{c}_{(n-b,1^b)}(q/z) = \qbinom{n-1}{b}_q (q+z)(q^2+z)\cdots (q^b+z).
	\end{equation}
\end{corollary}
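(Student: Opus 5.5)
The plan is to combine equation~\eqref{eq:c-q-z} with Proposition~\ref{prop:type-A-hooks-11}. By equation~\eqref{eq:c-q-z}, $\mathbf{c}_{(n-b,1^b)}(q/z)$ equals the bigraded multiplicity $\langle \grFrob(R^{(1,1)}_{\mathfrak{S}_n};q;z), s_{(n-b,1^b)}\rangle$. This identification uses Theorem~\ref{thm:DSS} and the fact that with one bosonic and one fermionic variable $s_\lambda(q/z)$ vanishes off hooks. So the corollary reduces to evaluating a hook multiplicity of $R^{(1,1)}_{\mathfrak{S}_n}$.

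Next, match the hook indexing. Write $(n-b,1^b)=(d+1,1^{n-d-1})$, which forces $d=n-1-b$; as $b$ ranges over $0,\ldots,n-1$, so does $d$. Proposition~\ref{prop:type-A-hooks-11} then gives the multiplicity as
\begin{equation*}
\qbinom{n-1}{n-1-b}_q\prod_{j=1}^{b}(z+q^j).
\end{equation*}
By the symmetry $\qbinom{n-1}{n-1-b}_q=\qbinom{n-1}{b}_q$ this becomes $\qbinom{n-1}{b}_q (q+z)(q^2+z)\cdots(q^b+z)$, which is exactly the claimed formula.

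The main point needing care is the justification of equation~\eqref{eq:c-q-z}, that is, that Bergeron's $\mathbf{c}_\mu$ specialized to one variable per alphabet recovers the $(1,1)$ Frobenius coefficient. This is already supplied by Theorem~\ref{thm:DSS} and the preceding remark, so no new work is required. The only remaining check is the index bookkeeping $d=n-1-b$ and the boundary cases. At $b=0$ the product is empty and the multiplicity is $1$, matching the trivial character appearing only in degree $0$. At $b=n-1$ the result is $\prod_{j=1}^{n-1}(q^j+z)$, which agrees with the known sign multiplicity.
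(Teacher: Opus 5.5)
Your proposal is correct and matches the paper's proof: it identifies $\mathbf{c}_{(n-b,1^b)}(q/z)$ with the hook multiplicity via equation~\eqref{eq:c-q-z}, then applies Proposition~\ref{prop:type-A-hooks-11} with $d=n-b-1$. Your explicit use of $\qbinom{n-1}{n-1-b}_q=\qbinom{n-1}{b}_q$ fills in a step the paper leaves implicit.
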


\begin{proof}
	By Proposition~\ref{prop:type-A-hooks-11} with $d = n-b-1$, the right-hand side of equation~\eqref{eq:c-q-z} at $\mu = (n-b,1^b)$ is $\qbinom{n-1}{b}_q \prod_{j=1}^{b}(z+q^j)$.
\end{proof}

As a consequence, we determine an explicit formula for universal series coefficients $c_{\lambda,\mu}$, where both
$\lambda$ and $\mu$ are hook shapes. This generalizes \cite[Corollary 4.6]{Lentfer-Supersymmetry} from the symmetric
group sign character to hook characters.
Let $\langle q^k \rangle p(q)$ denote the coefficient of $q^k$ from the polynomial $p(q)$.

\begin{corollary}\label{cor:hook-hook}
	Fix $n \geq 1$. Let $\mu = (n-b,1^b)$ be a hook shape, and let $\lambda$ be any non-empty hook shape. Then:
	\begin{itemize}
		\item for $0 \leq i \leq b-1$ and $k \geq 0$, we have 
        \begin{equation}
			c_{(\binom{b+1-i}{2}+k,1^i),(n-b,1^b)} = \langle q^k \rangle \qbinom{b-1}{i}_q \qbinom{n-1}{b}_q;
		\end{equation}
		\item $c_{\lambda, (n-b,1^b)} = 0$ for every other hook $\lambda$.
	\end{itemize}
\end{corollary}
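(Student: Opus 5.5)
The plan is to compare coefficients in the identity
\begin{equation*}
\mathbf{c}_{(n-b,1^b)}(q/z)=\sum_{\lambda \text{ a hook}} c_{\lambda,(n-b,1^b)}\, s_\lambda(q/z)
\end{equation*}
from equation~\eqref{eq:c-q-z}. By Corollary~\ref{cor:bergeron-conjecture-4}, the left-hand side equals $\qbinom{n-1}{b}_q\prod_{j=1}^{b}(q^j+z)$. What remains is to compute the one-variable super Schur functions $s_\lambda(q/z)$ explicitly for hooks $\lambda$, and to check that they are linearly independent in a way that lets us read off the coefficients.

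First I will compute $s_{(a,1^i)}(q/z)$ for $a\ge1$, $i\ge0$, directly from equation~\eqref{eq:superschur}. With one variable, $s_\nu(q)$ vanishes unless $\nu=(m)$ is a single row, in which case it equals $q^m$. Likewise $s_{\lambda'/\nu'}(z)$ vanishes unless $\lambda'/\nu'$ is a horizontal strip, i.e.\ $\lambda/\nu$ is a vertical strip, in which case it equals $z^{|\lambda/\nu|}$. For $\lambda=(a,1^i)$ and $\nu=(m)$ with $m\le a$, the skew shape $\lambda/\nu$ consists of $a-m$ cells in the first row together with the $i$ cells of the leg. This is a vertical strip exactly when $a-m\le1$. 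Therefore
\begin{equation*}
s_{(a,1^i)}(q/z)=q^az^i+q^{a-1}z^{i+1}=q^{a-1}z^i(q+z),
\end{equation*}
which also covers $a=1$, where $\nu=\varnothing$. The empty hook contributes $s_\varnothing=1$. The monomials $q^{a-1}z^i$ are distinct for distinct $(a,i)$, so
\begin{equation*}
\mathbf{c}_\mu(q/z)=c_{\varnothing,\mu}+(q+z)\sum_{a\ge1,\,i\ge0}c_{(a,1^i),\mu}\,q^{a-1}z^i ,
\end{equation*}
and the nonempty-hook coefficients are the coefficients of the polynomial $(\mathbf{c}_\mu(q/z)-c_{\varnothing,\mu})/(q+z)$.

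Next I determine $c_{\varnothing,\mu}$ by setting $z=-q$, which gives $c_{\varnothing,\mu}=\mathbf{c}_\mu(q/-q)$. If $b\ge1$, the $j=1$ factor $(q+z)$ of the product vanishes, so $c_{\varnothing,\mu}=0$. In that case
\begin{equation*}
\sum_{a,i}c_{(a,1^i),\mu}\,q^{a-1}z^i=\qbinom{n-1}{b}_q\prod_{j=2}^{b}(z+q^j)=\qbinom{n-1}{b}_q\prod_{j=1}^{b-1}(z+q\cdot q^{j}).
\end{equation*}
By the $q$-binomial theorem, exactly as in the proof of Proposition~\ref{prop:type-A-hooks-11}, the coefficient of $z^i$ in this product is $q^{b-1-i}\,q^{\binom{b-i}{2}}\qbinom{b-1}{i}_q=q^{\binom{b+1-i}{2}-1}\qbinom{b-1}{i}_q$, for $0\le i\le b-1$. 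Matching $q^{a-1}$ with $q^{\binom{b+1-i}{2}-1+k}$ gives $a=\binom{b+1-i}{2}+k$ and coefficient $\langle q^k\rangle\qbinom{b-1}{i}_q\qbinom{n-1}{b}_q$, which is the first bullet. Every other pair $(a,i)$ has coefficient zero: either $i\ge b$, or $a<\binom{b+1-i}{2}$, which forces $k<0$. This is the second bullet.

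The case $b=0$ must be handled separately. There $\mathbf{c}_{(n)}(q/z)=1$, so $c_{\varnothing,(n)}=1$ and all nonempty-hook coefficients vanish. This agrees with the statement, since the first bullet is vacuous and the second gives zero for all nonempty hooks. I do not expect a genuine obstacle. The only delicate point is the vertical-strip bookkeeping in the super Schur computation, in particular the case $a=1$ and keeping the empty hook separate so that the division by $(q+z)$ is justified.
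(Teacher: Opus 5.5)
Your argument is correct, and it reaches the same expansion as the paper by a slightly different algebraic route.

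The paper starts from the full expansion $\qbinom{n-1}{b}_q\sum_i z^i q^{\binom{b+1-i}{2}}\qbinom{b}{i}_q$ given by Proposition~\ref{prop:type-A-hooks-11}. It splits $\qbinom{b}{i}_q=\qbinom{b-1}{i}_q+q^{b-i}\qbinom{b-1}{i-1}_q$ by the $q$-Pascal rule, reindexes the second piece, and then recognizes each pair $z^iq^{a}+z^{i+1}q^{a-1}$ as $s_{(a,1^i)}(q/z)$.

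You instead note that $s_{(a,1^i)}(q/z)=q^{a-1}z^i(q+z)$. You then peel off the $j=1$ factor $(q+z)$ of $\prod_{j=1}^b(q^j+z)$ directly and expand only $\prod_{j=2}^{b}(z+q^j)$ by the $q$-binomial theorem. The paper's $q$-Pascal step is exactly the coefficientwise form of this factorization, so the content is the same, but your version avoids the reindexing.

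Your route also makes explicit several points the paper leaves implicit:
\begin{itemize}
\item the hook super Schur functions are linearly independent, so the $c_{\lambda,\mu}$ can be read off uniquely;
\item the empty hook must be separated, which you handle by setting $z=-q$;
\item the case $b=0$, where the paper's $q$-Pascal split of $\qbinom{0}{0}_q$ is degenerate, gives $c_{\varnothing,(n)}=1$ and all nonempty-hook coefficients zero.
\end{itemize}
The paper's route, in exchange, writes the answer directly as an explicit nonnegative combination of the $s_\lambda(q/z)$, which is the form in which the corollary is stated.
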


\begin{proof}
	By Proposition~\ref{prop:type-A-hooks-11} with $d = n-b-1$, we have that
	\begin{align*}
		\langle\grFrob & (R_{\mathfrak{S}_n}^{(1, 1)};q;z), s_{(n-b,1^{b})}\rangle = \qbinom{n-1}{b}_q \sum_{i=0}^{n-1} z^{i} q^{\binom{b+1-i}{2}}\qbinom{b}{i}_q \\
		               & = \qbinom{n-1}{b}_q \sum_{i=0}^{n-1} z^{i} q^{\binom{b+1-i}{2}}\left( \qbinom{b-1}{i}_q + q^{b-i} \qbinom{b-1}{i-1}_q\right)             \\
		               & = \sum_{i=0}^{n-1} z^{i} \sum_{k \geq 0} \Bigg(q^{\binom{b+1-i}{2}+k} \langle q^k \rangle \qbinom{b-1}{i}_q \qbinom{n-1}{b}_q            \\
		               & \qquad + q^{\binom{b+1-i}{2}+b-i+k} \langle q^k \rangle \qbinom{b-1}{i-1}_q \qbinom{n-1}{b}_q\Bigg).
	\end{align*}
	Then, by shifting $i \mapsto i+1$ in the second part,
	\begin{align*}
		 & = \sum_{i=0}^{n-1} \sum_{k \geq 0} \left( z^i q^{\binom{b+1-i}{2}+k}  + z^{i+1}q^{\binom{b+1-i}{2}+k-1} \right)\langle q^k \rangle \qbinom{b-1}{i}_q \qbinom{n-1}{b}_q \\
		 & = \sum_{i=0}^{n-1} \sum_{k \geq 0} s_{(\binom{b+1-i}{2}+k,1^i)}(q/z)\langle q^k \rangle \qbinom{b-1}{i}_q \qbinom{n-1}{b}_q.
	\end{align*}
\end{proof}

\section{Recovering the Hilbert series}\label{sec:combinatorial-consequences}

In this section, we specialize Theorem~\ref{thm:main-wreath} to the bigraded Hilbert series of
$R^{(1,1)}_{\Z_r\wr\mathfrak{S}_n}$ and show that the result agrees with the formula conjectured by Sagan--Swanson
\cite[Conjecture~5.3]{SSreflection} and proven by Bhattacharya--Rhoades \cite[Corollary~3.22]{BR}. Since the proof of
Theorem~\ref{thm:main-wreath} takes the monomial basis of Theorem~\ref{thm: monomial-basis} as input, this does not give
an independent proof of the Hilbert series.
We also obtain generating functions for the statistics $\fmaj$
and $\sinv$ on $\SOP^r_{n,k}$ (Proposition~\ref{prop:Hilb-SOP}).

Let $\llambda = (\lambda^{(0)},\ldots,\lambda^{(r-1)}) \vdash_r n$, and write $n_i:= |\lambda^{(i)}|$. Recall that
\begin{equation}
	s_{\llambda}(\x)= \prod_{i=0}^{r-1}s_{\lambda^{(i)}}(\x^{(i)}),\qquad m_{\llambda}(\x)=\prod_{i=0}^{r-1}m_{\lambda^{(i)}}(\x^{(i)}),
\end{equation}
and
\begin{equation}
	s_{\llambda}(\x)=\sum_{\mmu}
	K_{\llambda,\mmu}m_{\mmu}(\x),
\end{equation}
where ${\mmu} = (\mu^{(0)},\ldots,\mu^{(r-1)})$, $\mu^{(i)}$ is a partition of $n_i$ that is dominated by $\lambda^{(i)}$ for all $i$, and $K_{\llambda,\mmu}:= \prod_{i=0}^{r-1} K_{\lambda^{(i)},\mu^{(i)}}$ is a product of Kostka numbers.
In particular if $\mu^{(i)}=1^{n_i}$ for all $i$,
\begin{equation}\label{eq:Kostka}	K_{\llambda,\mmu}=\prod_{i=0}^{r-1}f^{\lambda^{(i)}}.
\end{equation}

Recall from \eqref{eq:Hilb-from-Frob} that the Hilbert series of a bigraded $\Z_r \wr \mathfrak{S}_n$-module $V$ may be
recovered from its bigraded Frobenius series via
\begin{equation}
	\Hilb(V;q;z) = \left\langle \grFrob(V;q;z), \big(h_1(\x^{(0)}) + \cdots + h_1(\x^{(r-1)})\big)^n \right\rangle,
\end{equation}
and
\begin{equation}
	\big(h_1(\x^{(0)}) + \cdots + h_1(\x^{(r-1)})\big)^n=\sum_{n_0+\cdots +n_{r-1}=n}
	\binom{n}{n_0,\ldots,n_{r-1}}
	\prod_{i=0}^{r-1}h_{1^{n_i}}(\x^{(i)}).
\end{equation}

Since
$\langle m_{\llambda}(\x),h_{\mmu}(\x)
	\rangle=\delta_{\llambda,{\mmu}}$,
the pairing $\langle s_{\llambda}(\x),h_{\mmu}(\x)\rangle$ equals $K_{\llambda,\mmu}$ when $|\mu^{(i)}| = n_i$ for
all $i$, and is zero otherwise. Thus Theorem~\ref{thm:main-wreath} gives
\begin{equation}\label{eq:Hilbert}
	\Hilb\big(R^{(1,1)}_{\Z_r\wr\mathfrak{S}_n};q;z\big) =\sum_{\QQ\in \SYT^r_n}q^{\fmaj(\QQ)}\prod_{j=1}^{\fdes(\QQ)}\left(1+\frac{z}{q^{rj-r+1}}\right)f^{\shape(\QQ)},
\end{equation}
where
\begin{equation}
	f^{\llambda}:=\binom{n}{|\lambda^{(0)}|,\ldots,|\lambda^{(r-1)}|}\prod_{i=0}^{r-1}f^{\lambda^{(i)}}
\end{equation}
is the number of standard $r$-tableaux of shape $\llambda$.

Recall the $r$-colored RSK correspondence $g \mapsto (\PP(g),\QQ(g))$ and its restriction to $\Z_r\wr\mathfrak{S}_n$
(Corollary~\ref{cor:RS-colored-permutations}).
The bijection in equation~\eqref{eq:RS-colored-permutations} extends to $r$-colored super ordered set partitions in $\SOP^r_{n,k}$ as follows.
Identify $\rho \in \SOP^r_{n,k}$ with its reading word $(g,S)$, where $g \in \Z_r\wr\mathfrak{S}_n$, $S\subseteq \fDes(g)$, and $|S|=n-k$ (Definition~\ref{def:reading-word-SOmP}).
Associate to $\rho$ the triple $(\PP(g), \QQ(g), S)$. This gives a bijection between $\SOP^r_{n,k}$ and the set of triples $(\PP,\QQ,S)$ where $\PP$ and $\QQ$ are standard $r$-tableaux of the same shape and size $n$, $S\subseteq \fDes(\QQ)$, and $|S|=n-k$.

Analogously to Definition~\ref{def:fmaj-SOmP}, we define
\begin{equation}
	\fmaj(\QQ,S):=\fmaj(\QQ)-\sum_{i\in S}(r|\fDes(\QQ)\cap [i,n-1]|-(r-1)).
\end{equation}
By equation~\eqref{eq:RSK-fDes-fmaj} and Definition~\ref{def:fmaj-SOmP}, we have that $\fmaj(\QQ,S)=\fmaj(\rho)$ whenever
$\rho$ corresponds to $(\PP,\QQ,S)$. Expanding the product, we have
\begin{equation}
	\sum_{\QQ\in \SYT^r_n}q^{\fmaj(\QQ)}\prod_{j=1}^{\fdes(\QQ)}\left(1+\frac{z}{q^{rj-r+1}}\right)=
	\sum_{\QQ\in \SYT^r_n}\sum_{S\subseteq \fDes(\QQ)}z^{|S|}q^{\fmaj(\QQ,S)},
\end{equation}
where we used that the $j$th largest element $i$ of $\fDes(\QQ)$ satisfies $|\fDes(\QQ)\cap [i,n-1]| = j$.

We can now conclude
\begin{align}
	\Hilb\big(R^{(1,1)}_{\Z_r\wr\mathfrak{S}_n};q;z\big) & =\sum_{\QQ\in \SYT^r_n}q^{\fmaj(\QQ)}\prod_{j=1}^{\fdes(\QQ)}\left(1+\frac{z}{q^{rj-r+1}}\right)f^{\shape(\QQ)}    \notag    \\
	& = \sum_{\substack{\PP,\QQ\in \SYT^r_n\\ \shape(\PP)=\shape(\QQ)}}\ \sum_{S\subseteq \fDes(\QQ)}z^{|S|}q^{\fmaj(\QQ,S)}\notag \\
	& = \sum_{k=0}^n z^{n-k}\sum_{\rho\in \SOP^r_{n,k}}q^{\fmaj(\rho)}.\label{eq:last-expression}
\end{align}
It only remains to identify the last expression with the formula of \cite[Corollary 3.22]{BR}.

For an integer $m$ and a variable $t$, write $[m]_t:= 1 + t + \cdots + t^{m-1}$ if $m \geq 1$, and $[m]_t:= 0$
otherwise. Following \cite{BR}, define $\Stir^{(r)}_q(n,k)$ for integers $n \geq 0$ and $k$ by
$\Stir^{(r)}_q(0,k):= \delta_{k,0}$ and
\begin{equation}\label{eq:Stir-recursion}
	\Stir^{(r)}_q(n,k):= \Stir^{(r)}_q(n-1,k-1) + [kr+r-1]_q  \Stir^{(r)}_q(n-1,k)
\end{equation}
for $n \geq 1$, so that $\Stir^{(r)}_q(n,k) = 0$ unless $0 \leq k \leq n$.

\begin{proposition}\label{prop:Hilb-SOP}
	For $0 \leq k \leq n$, we have
	\begin{equation}\label{eq:Hilb-SOP}
		\sum_{\rho \in \SOP^r_{n,k}} q^{\fmaj(\rho)}
		= \sum_{\rho \in \SOP^r_{n,k}} q^{\sinv(\rho)}
		= [r]_q [2r]_q \cdots [kr]_q \, \Stir^{(r)}_q(n,k).
	\end{equation}
\end{proposition}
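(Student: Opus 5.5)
The plan is to prove the two equalities in \eqref{eq:Hilb-SOP} separately. The first comes from the equidistribution already established. The second comes from a direct recursion for the $\sinv$ generating function, matched against \eqref{eq:Stir-recursion}.

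\emph{First equality.} Lemma~\ref{lem: fmaj-to-sinv} gives $\sum_{\rho\in\SOmP^r_{n,k}} q^{\fmaj(\rho)}\x^\rho=\sum_{\rho\in\SOmP^r_{n,k}} q^{\sinv(\rho)}\x^\rho$ as an identity of formal power series. We extract the coefficient of the squarefree monomial $x^{(\cdot)}_1\cdots x^{(\cdot)}_n$, summed over all colorings. Equivalently, we take the coefficient of $\prod_{b=1}^n \bigl(\sum_{c} x^{(c)}_b\bigr)$, i.e.\ we restrict to elements in which each base $1,\ldots,n$ appears exactly once. Those elements are exactly $\SOP^r_{n,k}$, and both statistics are defined intrinsically on them. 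This yields $\sum_{\SOP^r_{n,k}}q^{\fmaj}=\sum_{\SOP^r_{n,k}}q^{\sinv}$.

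\emph{Second equality.} Set $A_{n,k}(q):=\sum_{\rho\in\SOP^r_{n,k}}q^{\sinv(\rho)}$ and $P_k:=[r]_q[2r]_q\cdots[kr]_q$. The target is $A_{n,k}=P_k\,\Stir^{(r)}_q(n,k)$, which I would prove by induction on $n$; for $n=0$ both sides equal $\delta_{k,0}$. The key step is to delete the element with base $n$ from $\rho\in\SOP^r_{n,k}$. Base $n$ is $\prec$-larger than every other letter. Hence, exactly as in the proof of Lemma~\ref{lem: Ink-recursion}, it never changes any block's minimum unless it forms a singleton block, and it forms a strong inversion precisely with each block to its right. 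There are three cases.
\begin{itemize}
\item[(a)] If $n^c$ is a singleton nonzero block with $p$ blocks to its right, it contributes $c+rp$. Summing over $c\in\{0,\ldots,r-1\}$ and $0\le p\le k-1$ gives $[kr]_q A_{n-1,k-1}$.
\item[(b)] If $n^c$ lies in a nonzero block of size at least two with $p$ blocks to its right, it contributes $c+(r-1)+rp$. This sums to $q^{r-1}[kr]_q A_{n-1,k}$.
\item[(c)] If $n^c$ with $c\ge1$ lies in the zero block, it contributes $\eps'=c-1$. This gives $[r-1]_q A_{n-1,k}$.
\end{itemize}
Adding these, $A_{n,k}=[kr]_qA_{n-1,k-1}+\bigl([r-1]_q+q^{r-1}[kr]_q\bigr)A_{n-1,k}=[kr]_qA_{n-1,k-1}+[kr+r-1]_qA_{n-1,k}$. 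Substituting the inductive hypothesis and dividing by $P_k$ (using $P_k=[kr]_qP_{k-1}$) reproduces \eqref{eq:Stir-recursion} exactly.

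The main point to verify carefully is the bookkeeping in (a)–(c). One must check that deleting $n$ changes the term $(r-1)(\ell-k)$ and $\eps(\rho)$ exactly as claimed: in case (b) $\ell$ drops by one with $k$ fixed, in case (a) both drop, and in case (c) neither $\ell$ nor $k$ changes. One must also confirm that no other strong inversion pair is created or destroyed, because the minima of the other blocks are unchanged. This is the same verification as in Lemma~\ref{lem: Ink-recursion} specialized to $j=1$, with strong inversions in place of weak ones; for a single largest letter the two notions agree.
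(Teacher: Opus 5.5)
Your proposal is correct and follows the paper's proof essentially step for step. The first equality comes from extracting the set-partition monomials in Lemma~\ref{lem: fmaj-to-sinv}, and the second from deleting the letter with base $n$ to obtain $A_{n,k}=[kr]_qA_{n-1,k-1}+[kr+r-1]_qA_{n-1,k}$, with your cases (b) and (c) together forming the paper's Case 2 via $q^{r-1}[kr]_q+[r-1]_q=[kr+r-1]_q$. The only implicit point is the convention $A_{n,k}=0$ for $k<0$ or $k>n$, which the induction needs at the boundary cases $k=0$ and $k=n$.
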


\begin{proof}
	The first equality follows from Lemma~\ref{lem: fmaj-to-sinv}, by extracting the coefficients of $\mathbf{x}^\rho$ for those $\rho$ which are $r$-colored super ordered set partitions.

	For the second equality, for $0 \leq k \leq n$, set \begin{equation}
		S(n,k):=\sum_{\rho \in \SOP^r_{n,k}} q^{\sinv(\rho)},
	\end{equation}
	and otherwise set $S(n,k) := 0$.

	We claim that
	\begin{equation}\label{eq:S-recursion}
		S(n,k)=[kr]_q S(n-1,k-1)+[kr+r-1]_q S(n-1,k),
	\end{equation}
	for $n \geq 1$.

	Every $\rho\in \SOP^r_{n,k}$ is obtained in exactly one way, by adding the letter $n^i$, for some color $i$, to the element $\rho^0$ of $\SOP^r_{n-1}$ (obtained from $\rho$ by deleting $n^i$, together with its block if $\{n^i\}$ is a block).
	Since $n^i$ is the largest letter of $\rho$ with respect to the order $\prec$, adding $n^i$ to a non-zero block does not change the minimum with respect to $\prec$ of that block, and $n^i$ forms a strong inversion with every block strictly to the right of its block.
	Thus if $n^i$ lies in a non-zero block of $\rho$, the strong inversions of $\rho$ which are not strong inversions of $\rho^0$ are the pairs $(n^i,S)$ with $S$ a block strictly to the right of the block containing $n^i$. If $n^i \in Z$, then $\rho$ and $\rho^0$ have the same strong inversions.
	We compute the change in each of the four terms of Definition~\ref{def:sinv-winv}; there are two possibilities:
	\begin{itemize}
		\item Case 1: $\rho^0=Z\mid S_1\mid \cdots \mid S_{k-1} \in \SOP^r_{n-1,k-1}$ and $n^i$ is added as a singleton block.
		      If it is placed so that exactly $j$ blocks are to its right, this creates $j$ new strong inversions and adds $i$ to the sum of colors, while $\ell-k$ and $\eps'(Z)$ are unchanged.
		      Thus $\sinv(\rho)=\sinv(\rho^0)+rj+i$.
		      As $j$ varies over $\{0,\ldots, k-1\}$ and $i$ over $\{0,\ldots, r-1\}$, the values $rj+i$ run over $\{0,\ldots, kr-1\}$, each exactly once.
		      Thus these $\rho$ contribute $[kr]_q S(n-1,k-1)$ to $S(n,k)$.

		\item Case 2: $\rho^0=Z\mid S_1\mid \cdots \mid S_{k} \in \SOP^r_{n-1,k}$ and $n^i$ is added to an existing block.
		      If $n^i$ is added to block $S_j$, this creates $k-j$ new strong inversion pairs, adds $i$ to the sum of colors, and increases $\ell-k$ by $1$, so $\sinv(\rho)=\sinv(\rho^0)+r(k-j)+i+(r-1)$.
		      As $j$ varies over $\{1,\ldots, k\}$ and $i$ over $\{0,\ldots, r-1\}$, these values run over $\{r-1,\ldots, kr+r-2\}$, each exactly once.
		      If $n^i$ is added to $Z$, then $i\neq 0$, and only $\eps'(Z)$ changes, so $\sinv(\rho)=\sinv(\rho^0)+(i-1)$ takes each value in $\{0,\ldots, r-2\}$ exactly once.
		      Together, every value in $\{0,\ldots, kr+r-2\}$ occurs exactly once, so these $\rho$ contribute $[kr+r-1]_q S(n-1,k)$ to $S(n,k)$.
	\end{itemize}
	Summing the two cases yields equation~\eqref{eq:S-recursion}.
	Multiplying equation~\eqref{eq:Stir-recursion} by $[r]_q[2r]_q\cdots [kr]_q$ shows that the right-hand side of equation~\eqref{eq:Hilb-SOP} satisfies exactly the same recursion, and both sides of the second equality in \eqref{eq:Hilb-SOP} equal $\delta_{k,0}$ at $n=0$.
\end{proof}

\begin{corollary}\label{cor:Hilbert-series}
	For $n \geq 1$, we have
	\begin{equation}\label{eq:Hilbert-series}
		\begin{aligned}
			\Hilb\big(R^{(1,1)}_{\Z_r\wr\mathfrak{S}_n};q;z\big)
			 & = \sum_{k=0}^{n} z^{n-k} \, [r]_q [2r]_q \cdots [kr]_q \, \Stir^{(r)}_q(n,k) \\
			 & = \sum_{J \subseteq [n]} z^{|J|} \prod_{i=1}^{n} \, [\stair(J,r)_i + 1]_q.
		\end{aligned}
	\end{equation}
\end{corollary}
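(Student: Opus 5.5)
The first equality follows by combining equation~\eqref{eq:last-expression} with Proposition~\ref{prop:Hilb-SOP}. Equation~\eqref{eq:last-expression} expresses the Hilbert series as $\sum_k z^{n-k}\sum_{\rho\in\SOP^r_{n,k}}q^{\fmaj(\rho)}$. Proposition~\ref{prop:Hilb-SOP} evaluates each inner sum as $[r]_q\cdots[kr]_q\,\Stir^{(r)}_q(n,k)$. So the work is entirely in the second equality, which is a purely combinatorial identity. One could cite \cite[Corollary~3.22]{BR} for it, but I would prove it directly by showing that both sides satisfy the same recursion in $n$.

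For each $k$, set $T(n,k):=\sum_{J\subseteq[n],\,|J|=n-k}\prod_{i=1}^n[\stair(J,r)_i+1]_q$. The second equality then amounts to $T(n,k)=[r]_q\cdots[kr]_q\Stir^{(r)}_q(n,k)$. I would derive a recursion for $T$ by conditioning on whether $n\in J$, and compare it with equation~\eqref{eq:S-recursion}: there I showed that $S(n,k)=[r]_q\cdots[kr]_q\Stir^{(r)}_q(n,k)$ satisfies $S(n,k)=[kr]_qS(n-1,k-1)+[kr+r-1]_qS(n-1,k)$ with $S(0,k)=\delta_{k,0}$.

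The key structural fact is how the staircase changes when the last index is removed. Let $J'=J\cap[n-1]$, and suppose $[n]\setminus J$ has $k$ elements.
\begin{itemize}
\item If $n\notin J$, then $n=u_k$ and $\stair(J,r)_n=kr-1$. For $i<n$, the smallest $u$ exceeding $i$ is the same for $J$ and $J'$, so the entries agree. This case contributes $[kr]_q\,T(n-1,k-1)$.
\item If $n\in J$, the next non-$J$ index after $n$ is the sentinel $u_{k+1}=n+1$, so $\stair(J,r)_n=(k+1)r-2$. For $i<n$, an index $i\in J$ whose next non-$J$ index in $[n]$ was the sentinel $n+1$ keeps the sentinel value $(k+1)r-2$ in $J'\subseteq[n-1]$, because the sentinel of $J'$ is at $n$ and $[n-1]\setminus J'$ still has $k$ elements. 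All other entries are also unchanged. This case contributes $[kr+r-1]_q\,T(n-1,k)$.
\end{itemize}
With the base case $T(0,k)=\delta_{k,0}$, this gives the same recursion as $S$, so $T=S$ by induction on $n$.

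The main obstacle is the bookkeeping in the second case: checking that the staircase entries of $J'$ in $[n-1]$ agree with those of $J$ at positions $1,\dots,n-1$. The sentinel moves from $n+1$ to $n$, but the rank of the next non-$J$ position is unchanged ($k+1$ in both cases), so the entries match. Once that is verified, summing $z^{n-k}T(n,k)$ over $k$ gives the right-hand side of the second line, and the corollary follows.
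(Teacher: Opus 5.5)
Your proposal is correct and follows essentially the same route as the paper: the first equality from equation~\eqref{eq:last-expression} and Proposition~\ref{prop:Hilb-SOP}, and the second by conditioning on whether $n\in J$ to obtain the recursion $P(n,k)=[kr]_qP(n-1,k-1)+[kr+r-1]_qP(n-1,k)$, matching equation~\eqref{eq:S-recursion} with the same initial condition. Your explicit check that the sentinel shift from $n+1$ to $n$ leaves the entries at positions $1,\dots,n-1$ unchanged spells out a step the paper only asserts.
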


\begin{proof}
	The first equality is Proposition~\ref{prop:Hilb-SOP} and equation~\eqref{eq:last-expression}.
	To see the second, define
	\begin{equation}
		P(n,k) := \sum_{\substack{J \subseteq [n],\\ |J| = n-k}} \prod_{i=1}^{n} \, [\stair(J,r)_i + 1]_q
	\end{equation}
	for $ 0 \leq k \leq n$, and $P(n,k) := 0$ otherwise.
	For those $J \subseteq [n]$ of cardinality $n-k$, the first $n-1$ entries of $\stair(J,r)$ form the staircase $\stair(J \cap [n-1],r)$, while the final entry is given by
	\begin{equation}
		\stair(J, r)_n = \begin{cases}
			kr-1   & \text{ if } n \not \in J, \\
			kr+r-2 & \text{ if } n \in J.
		\end{cases}
	\end{equation}
	Thus $P(n,k) = [kr]_q P(n-1,k-1) + [kr+r-1]_q P(n-1,k)$ for $n \geq 1$, which is the same recursion as equation~\eqref{eq:S-recursion}, with the same initial condition $P(0,k) = \delta_{k,0}$.
\end{proof}
The second expression in \eqref{eq:Hilbert-series} is the formula of \cite[Corollary~3.22]{BR}, and the first is
the form in which the Hilbert series was conjectured in \cite[Conjecture~5.3]{SSreflection}.
Their equality is also
shown in \cite[proof of Proposition~5.7]{SSreflection}.

\section*{Acknowledgments}
We would like to thank Andy Wilson and Jennifer Morse for helpful conversations. 
B. Rhoades was partially supported by NSF Grant DMS-2246846.

\section*{Computational tools and AI usage}
Supporting computations were performed in SageMath, Macaulay2, and Python. 
We used AI tools only for writing code in support of certain computations, along with proofreading.

\bibliographystyle{amsplain} 
\bibliography{main}
\end{document}